\newtheorem*{thma}{Theorem A}
\newtheorem*{thmb}{Theorem B}
\newtheorem*{thmc}{Theorem C}
\newcommand{\namedRef}[1]{%
\csname env:#1\endcsname\ \ref{#1}%
}
\newtoks\displayedcounter\displayedcounter={dummy}
\def\envLRT#1#2{\expandafter\gdef\csname env:#1\endcsname{#2}}
\newcommand{\namedLabel}[1]{\expandafter\xdef\csname env:#1\endcsname{\envName}%
\immediate\write\@auxout{\string\envLRT{#1}{\envName}}%
\label{#1}}
\newcommand{\newNumber}[2][]{\refstepcounter{\the\displayedcounter}%
\ifx#1\empty\label{#2}\else\gdef\envName{#1}\namedLabel{#2}\fi}
\def\newType#1#2{\newtheorem{#1}[\the\displayedcounter]{#2\gdef\envName{#2}}}
\theoremstyle{definition}
\newcommand
{\eqncount}{\setcounter{equation}{\value{dummy}}%
\addtocounter{dummy}{1}}
\newcommand{\bbR}{\mathbb R}
\newcommand{\cA}{\mathcal A}
\newcommand{\cB}{\mathcal B}
\newcommand{\cC}{\mathcal C}
\newcommand{\cD}{\mathcal D}
\newcommand{\cE}{\mathcal E}
\newcommand{\cF}{\mathcal F}
\newcommand{\cG}{\mathcal G}
\newcommand{\cH}{\mathcal H}
\newcommand{\cI}{\mathcal I}
\newcommand{\cK}{\mathcal K}
\newcommand{\cL}{\mathcal L}
\newcommand{\cM}{\mathcal M}
\newcommand{\cN}{\mathcal N}
\newcommand{\bZ}{\mathbf Z}
\newcommand{\bA}{\mathbf A}
\newcommand{\bC}{\mathbf C}
\newcommand{\bQ}{\mathbf Q}
\newcommand{\DG}{\cD(G)}
\newcommand{\ab}{{\mathcal Ab}}
\newcommand{\Ab}{{\mathcal Ab}}
\newcommand{\fs}{\mathfrak f}
\newcommand{\fg}{\mathfrak g}
\newcommand{\disjointunion}{\bigsqcup}
\newcommand{\vv}{\, | \,}
\DeclareMathOperator{\Hom}{Hom}
\newcommand{\bd}{\partial}
\DeclareMathOperator{\Image}{Im}
\DeclareMathOperator{\invlim}{\lower 6pt\hbox{$\stackrel{\displaystyle{\lim}}{\leftarrow}$}}
\DeclareMathOperator{\pr}{pr}
\newcommand{\Zhat}{\widehat \bZ}
\newcommand{\RGMorita}{RG\text{-Morita}}
\newcommand{\RGammaMorita}{R\Gamma\text{-Morita}}
\newcommand{\RMorita}{R\text{-Morita}}
\newcommand{\RbarMorita}{(R,-)\text{-Morita}}
\newcommand{\RbarWitt}{(R,-)\text{-Witt}}
\newcommand{\RGomegaMorita}{(RG,\omega)\text{-Morita}}
\newcommand{\RGomegaWitt}{(RG,\omega)\text{-Witt}}
\DeclareMathOperator{\wh}{Wh}
\DeclareMathOperator{\Or}{\mathbf{Or}}
\DeclareMathOperator{\Ind}{Ind}
\DeclareMathOperator{\Res}{Res}
\newcommand\prd{\hbox{.} }
\newcommand{\qm}{?}
 \newcommand{\bq}[1]{\iota_{#1}}
\newif\ifmathCheck\mathChecktrue
\def\cDdot{\cD_{\ast}}
\def\bAdot{\bA_\bullet}
\def\congtran{\tau}
\def\congtrandot{\tau^{\bullet}}
\def\upj{j^{\bullet}}\def\downj{j_{\bullet}}
\newcommand{\DtoAdot}{j}
\def\DtoDdot{\mu^\prime}
\def\DdotD{\mu}
\def\Burnside{{\mathbf A}}
\newdimen\bisetdim\bisetdim=4pt
\def\biset#1#2#3{{\lower \bisetdim\hbox{$\scriptstyle#1$}}%
{{#2}}{\lower \bisetdim\hbox{$\scriptstyle#3$}}}
\begin{document}

\title[Burnside Quotient Green Ring] 
{Dress induction and the Burnside quotient \\ Green ring}
\author{I.~Hambleton}
\thanks{Research partially supported by
NSERC Discovery Grant A4000 and the NSF}
\address{Department of Mathematics \& Statistics
 \newline\indent
McMaster University
 \newline\indent
Hamilton, ON  L8S 4K1, Canada}
\email{ian{@}math.mcmaster.ca}
\author{L.~R.~Taylor}
\address{Department of Mathematics
 \newline\indent
University of Notre Dame
 \newline\indent
Notre Dame, IN 46556, USA}
\email{taylor.2@nd.edu}
\author{E.~B.~Williams}
\address{Department of Mathematics
 \newline\indent
University of Notre Dame
 \newline\indent
Notre Dame, IN 46556, USA}
\email{williams.4@nd.edu}

\date{April 27, 2009}
\begin{abstract}\noindent
We define and study the Burnside quotient Green ring of a Mackey functor, introduced in our MSRI preprint \cite{htw4}. Some refinements of Dress induction theory are presented, together with applications to computation results for $K$-theory and $L$-theory of finite and infinite groups.
\end{abstract}

\maketitle
\section{Introduction}

  Induction theory began with Artin and Brauer's work in representation theory, 
was continued by Swan \cite{swan2} and Lam \cite{lam1} for $K${-}theory, and was put in its most
abstract and elegant setting by Green \cite{green1} and Dress \cite{dress1}, \cite{dress2}.  The theory sets up a convenient framework for computing the 
 value of a Mackey functor on some finite group $G$, given suitable generation results for a Green ring which acts on the Mackey functor (see   \cite{tomDieck2}, \cite{lindner1},  \cite{thevenaz1}, \cite{thevenaz-webb2} for some of the subsequent developments in this subject).
 
 The main examples in this theory are  (i) the Swan Green ring $SW(G, \bZ)$ (see \cite{swan2}), which leads to the Brauer-Berman-Witt induction theorem for representations of finite groups, and computation results for Quillen $K$-theory $K_n(RG)$, and (ii) the Dress Green ring $GU(G, \bZ)$
(see \cite{dress2}), which leads to computation results for the oriented surgery obstruction groups
 $L_n(\bZ G)$ of C.~T.~C.~Wall \cite{wall-VI}. 
 
 In Section \ref{one} we define the Burnside quotient Green ring $\cA_\cM$ for a Mackey functor $\cM\colon \cD(G) \to \Ab$, where $\cD(G)$ denotes the  category of finite $G$-sets, and $\Ab$ the category of abelian groups. This Green ring $\cA_\cM$ is the smallest quotient of the Burnside ring which is a Green ring, and still acts on the Mackey functor. As defined, it has many convenient naturality properties,
and generation results for $\cA_\cM$ will lead as usual to computation results for $\cM$. We define the concept of  a \emph{Dress generating set} $X$ for a Green ring in Definition \ref{dress_generating}. The main result (see Theorem \ref{main_theorem}) is:
\begin{thma}
A finite $G$-set $X$ is a Dress generating set for a Green ring
$\cG$ if and only if it is a Dress generating set for the Burnside
quotient Green ring $\cA_{\cG}$. 
\end{thma}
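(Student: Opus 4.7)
The strategy is to leverage the universal characterization of the Burnside quotient Green ring $\cA_{\cG}$ as the smallest quotient of the Burnside ring $\Burnside$ that still acts on $\cG$. Because $\cG$ is itself a Green ring (and so acts on itself through its ring structure), the unit map $\Burnside \to \cG$ factors through $\cA_{\cG}$, giving a natural Mackey-functor / ring homomorphism $\varphi \colon \cA_{\cG} \to \cG$ sending the class of any finite $G$-set $[Y] \in \cA_{\cG}(G)$ to the corresponding element of $\cG(G)$. The entire argument is organized around this factorization.

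For the direction ``$X$ Dress generating for $\cA_{\cG}$ implies $X$ Dress generating for $\cG$'', the plan is to push witnessing data forward along $\varphi$. A witnessing expression of Dress generation (as given by Definition~\ref{dress_generating}) involves the element $[X]$ together with transfers, restrictions and multiplications; because $\varphi$ is a homomorphism of Green rings and commutes with the full Mackey structure (induction, restriction, conjugation), its application to a generating relation in $\cA_{\cG}$ produces a generating relation in $\cG$.

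For the converse, ``$X$ Dress generating for $\cG$ implies $X$ Dress generating for $\cA_{\cG}$'', the key input is the minimality clause in the definition of $\cA_{\cG}$. A Dress-generating relation in $\cG$ takes place in the image of the unit map $\Burnside \to \cG$, so it lifts to an equality in $\Burnside$ modulo the annihilator ideal that cuts out $\cA_{\cG}$. By construction this relation then holds in $\cA_{\cG}$ itself, so $X$ is Dress generating there. The minimality of $\cA_{\cG}$ is what guarantees that the lift is genuine rather than an ambient identity in some larger intermediate quotient.

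The main obstacle I expect is keeping track of the Mackey-functor bookkeeping: Dress generation, as formulated in Definition~\ref{dress_generating}, is not purely a statement at $\cG(G)$ but involves restriction to a family of subgroups and the induction maps among them. One must verify that the quotient $\Burnside \twoheadrightarrow \cA_{\cG}$ is functorial enough in the Mackey-functor category that the push-forward and lift arguments apply uniformly at every level, and that the generating relation, written out with its transfers and restrictions, is preserved on each side. Once this compatibility of Dress generation with arbitrary Green-ring quotients of the Burnside ring is isolated as a lemma, the theorem follows as a formal consequence of the universal property of $\cA_{\cG}$.
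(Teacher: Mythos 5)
Your forward implication (Dress generating for $\cA_\cG$ implies Dress generating for $\cG$) is fine and matches the paper's easy direction: the canonical map $\cA_\cG \to \cG$ is a unital homomorphism of Green rings, so $1_\bullet$ in the image of $\cA_\cG(hyper_p\text{-}X)\otimes\bZ_{(p)} \to \cA_\cG(\bullet)\otimes\bZ_{(p)}$ forces $1_\bullet$ into the corresponding image for $\cG$, and that image is an ideal, hence everything.

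The converse is where your proposal breaks down. You assert that a Dress-generating relation in $\cG$ ``takes place in the image of the unit map $\cA \to \cG$,'' but this is false. Surjectivity of $\cG(hyper_p\text{-}X)\otimes\bZ_{(p)} \to \cG(\bullet)\otimes\bZ_{(p)}$ gives $1_\bullet = \sum_K b_K \Ind_K^G(y_K)$ with $y_K \in \cG(K)\otimes\bZ_{(p)}$ \emph{arbitrary}; these $y_K$ need not lie in $\cA_\cG(K) = \Image(\cA(K)\to\cG(K))$, so the relation neither lifts to $\cA$ nor descends to $\cA_\cG$. Had your lifting argument worked, it would prove that any generating set for $\cG$ is a generating set for the sub-Green ring $\cA_\cG$ --- but the paper explicitly notes this is false, and illustrates it with $R_\bC(G)$ versus $R_\bQ(G)$, where the Brauer generating relation uses irreducible characters that are not permutation modules. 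The minimality of $\cA_\cG$ does not help here; it is a statement about quotients of $\cA$ (the ideal $I_\cG$ is as large as possible), not a lifting property along $\cA \to \cG$.

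The paper's actual converse runs through injectivity of restriction, which is the idea your proposal is missing. Surjectivity of induction $\cG(hyper_p\text{-}X)\otimes\bZ_{(p)} \to \cG(\bullet)\otimes\bZ_{(p)}$ forces, via contractibility of the Amitsur complex (Proposition \ref{prop: Dress main thm}), injectivity of the restriction map $\cG(\bullet)\otimes\bZ_{(p)} \to \cG(hyper_p\text{-}X)\otimes\bZ_{(p)}$. Injectivity of restriction passes trivially to the sub-Green ring $\cA_\cG$, since its restriction maps are restrictions of those of $\cG$. Then Dress's decomposition theorem (Theorem \ref{alternative}: $\cM(\bullet)\otimes\bZ_{(p)} = \cK(Y)+\cI(Y)$) converts the vanishing of the restriction kernel in $\cA_\cG$ into surjectivity of induction from $hyper_p\text{-}(hyper_p\text{-}X) = hyper_p\text{-}X$. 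This surjectivity-to-injectivity-and-back mechanism is exactly why the $hyper_p$ enlargement appears in Definition \ref{dress_generating}, and no amount of pushing the generating relation around the factorization $\cA \twoheadrightarrow \cA_\cG \hookrightarrow \cG$ can substitute for it.
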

The naturality of the Burnside quotient Green rings can now be used to obtain computability results for sub- or quotient Mackey functors (see Theorem \ref{subquotient_properties}). We also point out a useful refinement of Dress induction in Theorem \ref{dress-refinement}.
We use the Burnside quotient Green ring in Section \ref{three} to study additive functors out of the categories $\RGMorita$ defined in \cite{htw3},   where $R$ is a commutative ring with unit. The main examples of such functors include $K$-theory, 
Hochschild homology and cyclic homology (see \cite[1.A.12]{htw3}). We define a bifunctor $d\colon \cD(G) \to \RGMorita$ in (\ref{DtoBurnside}) and prove  the following computability result (see Theorem \ref{rgmorita}):
\begin{thmb} 
 Any additive functor $F\colon \RGMorita\to \Ab$ gives a Mackey functor on $\cD(G)$ by composition with $d\colon \cD(G) \to \RGMorita$. Any such Mackey functor is hyperelementary computable.
 \end{thmb}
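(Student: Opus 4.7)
The plan is to verify that $F \circ d$ is a Mackey functor, then to exhibit the disjoint union of $G/H$ over hyperelementary $H \le G$ as a Dress generating set for the Burnside quotient Green ring $\cA_{F \circ d}$, from which hyperelementary computability will follow.

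First I would check that $F \circ d \colon \cD(G) \to \Ab$ is a Mackey functor. The bifunctor $d$ in (\ref{DtoBurnside}) supplies both covariant and contravariant functoriality on $\cD(G)$ landing in $\RGMorita$; composition with the additive $F$ preserves both, and the disjoint union axiom is immediate from additivity of $F$ and $d$. The Mackey double coset formula reduces to the identity $d(X) \otimes_{d(Z)} d(Y) \simeq d(X \times_Z Y)$ in $\RGMorita$ for pullback squares in $\cD(G)$, which is essentially the statement that tensoring permutation bimodules realizes fibered products of $G$-sets. Passing through the additive $F$ transports this Morita isomorphism into the Mackey relation in $\Ab$.

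Since $F \circ d$ is a Mackey functor on $\cD(G)$, the canonical Burnside action makes it a module over the Burnside ring $\Burnside(G)$, and the Burnside quotient Green ring $\cA_{F \circ d}$ is defined as in Section \ref{one} and is a quotient of $\Burnside(G)$. By the classical Dress induction theorem, the $G$-set $\disjointunion_H G/H$, taken over conjugacy classes of hyperelementary subgroups $H \le G$, is a Dress generating set for $\Burnside(G)$. Dress generation, being a surjectivity condition on the multiplicative pairing, is inherited by any quotient Green ring, hence holds for $\cA_{F \circ d}$. Since $\cA_{F \circ d}$ by construction acts on the Mackey functor $F \circ d$, this gives hyperelementary computability; Theorem~A then restates the conclusion in terms of any other Green ring naturally acting on $F \circ d$, should one be constructed from the multiplicative structure of $\cD(G)$ via $d$.

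The principal obstacle I anticipate is the first step: carefully unpacking the construction of $d$ in (\ref{DtoBurnside}) to verify the Morita-theoretic double coset formula, since this identity underlies the entire Mackey structure of $F \circ d$. Once that is done, steps two and three are essentially formal, combining the classical Dress induction theorem for the Burnside ring, preservation of Dress generation under quotient Green rings, and the universal role of $\cA_{F \circ d}$ established in Section \ref{one}, with Theorem~A available to translate the statement between any of the Green rings acting on $F \circ d$.
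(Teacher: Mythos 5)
Your first step (verifying that $F \circ d$ is a Mackey functor) is in the right spirit; the paper handles it by observing that $d$ factors through the additive completion $\bAdot(G)$ of the Burnside category, for which the composition result is known (citing \cite{htw2008}), rather than verifying the double coset formula directly in $\RGMorita$. That difference is mostly stylistic.

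The second part of your argument, however, contains a fatal error: the hyperelementary $G$-set is \emph{not} a Dress generating set for the Burnside ring $\cA$. This is exactly the point the paper flags at the end of Section~\ref{one}: ``the Burnside ring $\cA$ acts on any Mackey functor, but $\cA$ is $S$-injective only if $\bullet \subset S$. Hence the Burnside ring itself has no useful induction properties.'' Concretely, after $p$-localization the unit $1 \in \cA(\bullet)\otimes\bZ_{(p)}$ decomposes via the Dress idempotent theorem into primitive idempotents $e_H$ indexed by conjugacy classes of $p$-perfect subgroups, and the idempotent $e_G$ (present whenever $G$ is $p$-perfect) is never in the image of induction from proper subgroups. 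So $\cA(hyper_p\text{-}X_{\cH})\otimes\bZ_{(p)} \to \cA(\bullet)\otimes\bZ_{(p)}$ fails to be surjective unless $G$ is itself hyperelementary, and ``Dress generation passes to quotients'' buys you nothing because the hypothesis on $\cA$ is false. If your argument worked, it would prove that \emph{every} Mackey functor is hyperelementary computable, which is clearly wrong.

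What is actually needed — and what makes Theorem~A the key ingredient — is a Green ring strictly smaller than $\cA$ that still acts on $\cM = F\circ d$ and for which the hyperelementary family does give a Dress generating set. The paper constructs an explicit pairing $\cA_{SW}\times\cM\to\cM$, where $\cA_{SW}$ is the Burnside quotient Green ring of the Swan ring: a permutation module $\bZ[H/K]$ and an $H$-$H$-biset $X$ (free as a left $H$-set) yield a new biset $H/K\times X$, and this induces a ring homomorphism $P(H,\bZ)\to\Hom_{\RGMorita}(H,H)$. Since Swan's theorem makes $SW$ hyperelementary computable, and since $\cA_{SW}$ is its Burnside quotient Green ring, Theorem~A transfers the Dress generating set to $\cA_{SW}$. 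Then Theorem~\ref{subquotient_properties}(i) gives the Dress generating set for $\cM$. The Green ring $\cA_{\cM}$ you invoke would also work, but only once one knows it has the hyperelementary Dress generating set — and the only route to that fact is the $\cA_{SW}$-module structure, since $\cA_{\cM}$ is then a quotient of $\cA_{SW}$. Your proposal never builds any such intermediate Green ring and so never escapes the inadequacy of $\cA$ itself.
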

 This is a refinement of a result of Oliver \cite[11.2]{oliver3}, and Theorem A provides the key new ingredient in the proof. The extra generality is useful for studying functors such as the Dade group and the units in the Burnside ring (see Bouc \cite{bouc3}, \cite{bouc5}).
 
  The Burnside quotient Green ring has been applied to study the permutation representations of finite groups in \cite{htaylor1}, free actions of finite groups on products of spheres in \cite{h4}, and to the computation of Bass Nil-groups in 
 \cite{hlueck1}. This theory was  surveyed and used in \cite{htaylor2}. 
 Our results also apply to the computation of $K$ and $L$-theory for infinite groups, based on an idea of Farrell and Hsiang \cite{farrell-hsiang3}.
 
 \smallskip
    We introduce  Mackey \emph{pre}-functors and \emph{pseudo}-Mackey functors in Section \ref{four}. A Mackey pre-functor is a just pre-bifunctor $\cD(G) \to \Ab$, and a pseudo-Mackey functor is a Mackey pre-functor which admits a finite filtration by Mackey functors. Such structures have been observed in a number of different contexts: the main examples include the higher Whitehead groups $\wh_n(\bZ G)$, and the structure set of a compact manifold in surgery theory (see \cite[Chap.~9]{wall-book}).

     It turns out that the general scheme of Dress induction theory can be extended to pseudo-Mackey functors as well.
 In Section \ref{five}, we combine this idea with the Burnside quotient Green ring to study additive functors out of the category $\RGomegaMorita$   (see \cite[1C]{htw3}). We have the corresponding computability result (see  Theorem \ref{thmc}):
 \begin{thmc}   
 Let $F\colon \RGomegaMorita \to \Ab$ be an additive functor. 
 Then the composite $\cM=F\circ d\colon \cD(G) \to \Ab$ is a Mackey pre-functor. Moreover:
 \begin{enumerate}
 \item The $2$-adic completion of $\cM$ is $2$-hyperelementary computable.
 \item If $\cM$ is a Mackey functor, then $\cM$ is hyperelementary computable.
 \end{enumerate}
 \end{thmc}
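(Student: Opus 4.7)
The plan is to adapt the proof of Theorem B to the oriented setting, working with the pseudo-Mackey framework of Section \ref{four} and using the Dress Green ring $GU(G,\bZ)$ in place of the Swan Green ring.

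I first verify that $\cM = F \circ d$ is a Mackey pre-functor. By construction (following (\ref{DtoBurnside}) adapted to the oriented category of \cite[1C]{htw3}), the assignment $d\colon \cD(G) \to \RGomegaMorita$ is a pre-bifunctor, so composition with the additive functor $F$ yields a pre-bifunctor $\cD(G) \to \Ab$, which is a Mackey pre-functor by definition. The Mackey double-coset axiom may fail because a pullback square in $\cD(G)$ need not be sent by $d$ to a biproduct square in $\RGomegaMorita$; the obstruction is $2$-primary and comes from the orientation datum $\omega$, in the spirit of Wall's analysis of oriented surgery \cite{wall-VI}.

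For part (i), the key technical step is to exhibit the $2$-adic completion $\widehat{\cM}_2$ as a pseudo-Mackey functor, i.e.\ to equip it with a finite filtration by honest Mackey functors. Granted this, the extension of Dress induction to pseudo-Mackey functors developed in Section \ref{four} applies. The Dress Green ring $GU(G,\bZ)$ acts on $\widehat{\cM}_2$ compatibly with the filtration; Dress's theorem \cite{dress2} yields a $2$-hyperelementary Dress generating set for $GU(G,\bZ)$; Theorem A transports this to a Dress generating set for the Burnside quotient Green ring of each filtration quotient; and $2$-hyperelementary computability then propagates up the filtration using the naturality result Theorem \ref{subquotient_properties}.

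For part (ii), assume $\cM$ is a genuine Mackey functor, so that the Burnside quotient Green ring $\cA_\cM$ of Section \ref{one} is defined and acts on $\cM$, and no $2$-adic completion is required. Under the Mackey hypothesis the orientation-induced obstructions to biproduct squares collapse, and the $GU(G,\bZ)$-module structure on $\cM$ refines to an action by the Swan Green ring $SW(G,\bZ)$. Swan's hyperelementary induction theorem supplies a hyperelementary Dress generating set for $SW(G,\bZ)$, and Theorem A transfers it to $\cA_\cM$, yielding hyperelementary computability of $\cM$.

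The main obstacle is the construction of the pseudo-Mackey filtration on $\widehat{\cM}_2$. One must analyze the image under $d$ of an arbitrary pullback square in $\cD(G)$, identify the precise $2$-primary obstruction to this image being a biproduct square in $\RGomegaMorita$, and organize these obstructions into a finite filtration whose successive quotients satisfy the Mackey axiom. It is precisely this step that forces the $2$-adic completion in part (i) and becomes vacuous under the hypothesis of part (ii), thereby accounting for the sharper conclusion there.
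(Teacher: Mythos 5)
Your overall plan---use the pseudo-Mackey machinery of Section \ref{four} plus the Dress ring $GU$---is headed in the right direction, but there are two concrete gaps where the argument as written breaks down.

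First, for part (i), the crux is to identify the filtration and how the $2$-adic completion enters, and here you have it backwards. You propose to equip the \emph{already completed} object $\widehat{\cM}_2$ with a \emph{finite} filtration by Mackey functors; but a $2$-adic completion does not in general admit such a finite filtration, so $\widehat{\cM}_2$ is not a pseudo-Mackey functor in the sense of Section \ref{four}, and Proposition \ref{pseudo_two} does not apply to it. What the paper actually does is filter the uncompleted $\cM=F\circ d$ by the (infinite) chain $F_i=2^iF$, observe that each quotient $(F_i/F_{i+1})\circ d$ has exponent $2$ and is therefore an honest Mackey functor because the obstruction identified in Lemma \ref{pretomackey} --- namely that an inner automorphism by $x\in H$ acts as $\omega(x)\cdot\mathrm{id}=\pm\mathrm{id}$ rather than $\mathrm{id}$ --- becomes invisible mod $2$, and then feed the resulting filtered Amitsur pseudo-complex into Proposition \ref{pseudo_three}, whose inverse-limit output is precisely the contracted Amitsur complex of the $2$-adic completion. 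You never say why the filtration quotients are Mackey, and you invoke the wrong proposition.

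Second, for part (ii), you assert that under the Mackey hypothesis the $GU(G,\bZ)$-module structure ``refines to an action by the Swan Green ring $SW(G,\bZ)$'' and then cite Swan's theorem. This is not correct and is not how the paper proceeds: the Swan ring governs $K$-theoretic functors, not Hermitian/$L$-theoretic ones, and there is no such refinement. The paper keeps the Dress ring $GU(G,\bZ)$, uses the pairing of $\cA_{GU}$ on $\RGomegaMorita$ described just before Theorem \ref{thmc}, applies Dress's induction theorem \cite[Theorem 3]{dress2} to obtain hyperelementary computability of $GU$, and then transfers it to $\cA_{GU}$ by Theorem \ref{main_theorem}; the conclusion then follows as in the proof of Theorem \ref{rgmorita} since $\cM$ is a Green module over $\cA_{GU}$.
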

 As an application, we conclude from part (i) that the surgery obstruction groups $L_n(\bZ G, \omega)$, with arbitrary orientation character $\omega\colon G \to \{\pm 1\}$, are $2$-hyperelementary computable after $2$-adic completion  
 (see Example \ref{nonorientedL} for the meaning of computability in this setting). This computability result was certainly expected to be true, but the argument presented here seems to be the first actual proof in the non-oriented case.  In the oriented case, where $\omega$ is trivial, part (ii) applies to $L$-theory and the computability is just the result of Dress \cite[Theorem 1]{dress2}. For non-trivial $\omega$, the surgery obstruction group $L_n(\bZ G, \omega)$ is a Mackey functor if and only if it has exponent two (see \cite{taylor1}, and see \cite[5.2.5]{wall-VI} for an example where the $L$-groups do not have exponent two). In Lemma \ref{pretomackey} we give a general necessary and sufficient condition on $F$ for part (ii) to apply to $\cM$.

 \medskip\noindent
{\bf Acknowledgement}. We   were greatly indebted to L.~Gaunce Lewis, Jr.~for his meticulous reading of our 1990 MSRI preprint, and for his extensive helpful comments. He pointed out  to us that the definition of the Burnside quotient Green ring appears in his unpublished notes, \emph{The theory of Green functors} (1980). This paper is dedicated to his memory.

\section{Dress Induction}\label{one}
We will first recall some definitions Dress used in his formulation of
induction theory \cite[p.~301]{dress2}.
\subsection{Mackey Functors} Let $G$ be a finite group, and let 
$\DG$ denote the category whose objects are finite, left $G$-sets and whose morphisms are $G$-maps. A \emph{Mackey functor}
is a bifunctor $\cM = (\cM_\ast, \cM^\ast) \colon \cD(G) \to \ab$, where $\ab$ denotes the category of abelian groups and groups homomorphisms, such that $\cM_\ast(S) = \cM^\ast(S)$ for each object $S \in \DG$, and the following two  properties hold:

\medskip\noindent
\textbf{(M1)} For any pullback diagram of finite $G$-sets
$${\vcenter
{\xymatrix
{S\ar[r]^\Psi\ar[d]_\Phi&S_1\ar[d]^\varphi\\S_2\ar[r]^\psi&T}}}
$$
the induced maps give an commutative diagram
$${\vcenter
{\xymatrix
{\cM(S)\ar[r]^{\Psi_\cM}&\cM(S_1)\\
\cM(S_2)\ar[r]^{\psi_\cM}\ar[u]^{\Phi^\cM}&\cM(T)\ar[u]_{\varphi^\cM}}}}
$$
Here we denote the covariant maps by $\psi_\cM$ and the
contravariant maps by $\varphi^\cM$.

\medskip\noindent
\textbf{(M2)} The  embeddings of $S_1$ and $S_2$ into the disjoint
union $S_1\disjointunion S_2$ define an isomorphism
$\cM^\ast(S_1\disjointunion S_2) \to \cM^\ast(S_1)\oplus \cM^\ast(S_2)$. Let $\cM (\emptyset) = 0$.

\medskip
The property (M1) is the usual double coset formula, and (M2) gives additivity.
We remark that for any bifunctor satisfying (M1), the composition
$\cM_\ast(S_1)\oplus \cM_\ast(S_2) \to \cM_\ast(S_1\disjointunion S_2) = 
\cM^\ast(S_1\disjointunion S_2) \to \cM^\ast(S_1)\oplus \cM^\ast(M_2) = 
\cM_\ast(S_1)\oplus \cM_\ast(S_2)$ 
is just the identity matrix.
It follows that  any sub-bifunctor of a Mackey functor is Mackey. 

\begin{definition}  
If $\cM$ and $\cN$ are Mackey functors, then a \emph{homomorphism} $\cM \to \cN$ of Mackey functors is a natural transformation of bifunctors $\Theta\colon \cM \to \cN$ such that for each object
$S \in \DG$ the function $\Theta_S \colon \cM(S) \to \cN(S)$ is a homomorphism of abelian groups. It is easy to check that the kernel, 
$\ker \Theta$, the  image, $\Image \Theta$, and the cokernel of $ \Theta$ are all sub- or quotient Mackey functors of $\cM$ or $\cN$.
\end{definition}

\subsection{Pairings and Green functors} If $\cM$, $\cN$, and $\cL$ are Mackey functors, then a \emph{pairing} is a family of bilinear maps
$$\cM(S) \times \cN(S) \to \cL(S)$$
indexed by the objects of $\DG$, such that for any $G$-map
$\varphi\colon S \to T$ the following formulas hold:
\begin{eqnarray*}
\varphi^{\cL}(x \cdot y) &=&\varphi^\cM(x) \cdot \varphi^\cN(y)\quad \,\text{for\ } x\in \cM(T), y \in \cN(T)\\
x \cdot \varphi_\cN( y) &=&\varphi_\cL(\varphi^\cM(x )\cdot y) \quad \text{\ for\ } x\in \cM(T), y \in \cN(S)\\
\varphi_\cM(x) \cdot y&=&\varphi_\cL(x \cdot \varphi^N( y)) \quad\, \text{\ for\ } x\in \cM(S), y \in \cN(T)
\end{eqnarray*}

A \emph{Green ring}  is a Mackey functor $\cG$ together with a pairing $\cG \times \cG \to \cG$, and a collection of elements
$\{1_S \in \cG(S)\}$  such that the pairing defines an associative ring structure on each $\cG(S)$ with unit $1_S$, and $\varphi^\cG(1_T)=1_S$ for every $G$-map $\varphi\colon S \to T$. 

A \emph{homomorphism}  of Green rings $\Theta\colon \cG \to \cK$ is a homomorphism of Mackey functors
such that for each object
$S \in \DG$ the function $\Theta_S \colon \cG(S) \to \cK(S)$ is a unital ring homomorphism.   If $\Theta_S$ is injective for each object
$S \in \DG$, we say that $\cG$ is a \emph{sub}-Green ring of $\cK$. If $\Theta_S$ is surjective for each object
$S \in \DG$, then we say that $\cK$ is a \emph{quotient} Green ring of $\cG$.
 Similarly, we define sub-quotient Green rings.

If $\cM$ is a Mackey functor, then $\cM$ is a 
\emph{Green module} over a Green ring $\cG$ if there exists a pairing $\cG \times \cM \to \cM$ such that $\cM(S)$ becomes a left
$\cG(S)$-module from the pairing, and $1_S\cdot x = x$ for
all $x\in \cM(S)$.
\begin{example}  If  $ \cG \to \cK$ is a homomorphism of Green rings,  then $\cK$ is a Green module over $\cG$ under the pairing $\cG \times \cK \to \cK$ induced by the homomorphism. 
\end{example}

\subsection{The Burnside ring} 
For any left $G$-set $S$, we let $\cD_S(G)$ denote the category with objects $(X,f)$, where $X$ is a left $G$-set and $f\colon X \to S$ is a $G$-map. The morphisms $F\colon (X_1,f_1) \to (X_2,f_2)$ are $G$-maps $F\colon X_1\to X_2$ such that $f_2\circ F = f_1$. We define a
bifunctor $$\cA\colon \DG \to \ab$$ by setting $\cA(S) = K_0(\cD_S(G))$. 
If $\varphi\colonΠS\to T$ is a $G$-map, then $\varphi_\cA\colon \cA(S) \to \cA(T)$ is the  map induced on $K_0$ by the composition
$(X,f) \mapsto (X, \varphi\circ f)$. The contravariant map
$\varphi^\cA\colon \cA(T) \to \cA(S)$ is induced by the pullback
construction applied to $S \xrightarrow{\varphi} T \xleftarrow{f} Y$,
where $(Y,f)$ is an object in $\cD_T(G)$. The conditions (M1) and (M2)
are easy to check, and $\cA$ is a  Mackey functor. There is also a
pairing $\cA \times \cA \to \cA$ defined by pullback: let $(X_1,f_1)$
and $(X_2,f_2)$ represent elements of $\cA(S)$, and form the pullback
$X_1\xrightarrow{f_1} S \xleftarrow{f_2} X_2$ considered as $G$-set over $S$. This object in $\cD_S(G)$ represents the product, and
each $\cA(S)$ becomes an associative ring with unit element represented by $S\xrightarrow{id} S$. The resulting Green ring is called the \emph{Burnside ring}. Dress also remarks that the Burnside ring is the ``universal" Green ring, since it acts on any Mackey functor $\cM$. The required pairing $\cA \times \cM \to \cM$ is defined by pairing an element of $\cA(S)$ represented by a $G$-set $(X,f)$ over
$S$, and an element $x\in \cM(S)$, to get $f_\cM(f^{\cM}(x)) \in \cM(S)$.
It is not hard to check that $\cM(S)$ is a unital $\cA(S)$-module under this bilinear pairing, so $\cM$ is a Green module over $\cA$. 

We remark that a homomorphism  $\cM \to \cN$ of Mackey functors is compatible with the $\cA$-module action, so gives a map of
 $\cA$-Green modules.

If $\cG$ is a Green ring, the same checks show that $\cG$ is an $\cA$-algebra, implying in particular that $a\cdot(x\cdot y) = (a\cdot x)\cdot y$ for all $a\in \cA(S)$ and all $x,y\in \cG(S)$. It follows that the map $\iota \colon \cA \to \cG$
defined by $a\mapsto a \cdot 1_S$, for all $a\in \cA(S)$, is a (unital) ring homomorphism. Indeed
$$(a\cdot 1_S)\cdot (b\cdot 1_S) = a\cdot(1_S \cdot (b\cdot 1_S)) 
=a\cdot(b\cdot 1_S) = (a\cdot b) \cdot 1_S$$
for all $a,b\in \cA(S)$, since $\cG(S)$ is a $\cA(S)$-algebra. It is easy to check from the pairing formulas that $\iota \colon \cA \to \cG$ is also a  homomorphism  of Green rings.

\subsection{Ideals and quotient Green rings}  There is a natural notion of a (left) \emph{Green ideal} in a Green ring $\cG$, namely a sub-bifunctor $I \subset \cG$ such that $I(S) \subset \cG(S)$ is a left ideal in the ring $\cG(S)$.
 Similarly, we have right ideals and two-sided ideals.
If $I \subset \cG$ is a two-sided Green ideal, then the quotient functor $\cG/I$, defined by $S\mapsto \cG(S)/I(S)$, is a Green ring under the quotient pairing inherited from $\cG$.

If $\cG \times \cM\to \cM$ is a Green module structure on a Mackey
functor $\cM$, then we define the \emph{Green ideal} $I_\cM \subset
\cG$ as the sub-bifunctor of $\cG$ with 
$$I_\cM(S) = \{ a\in \cG(S) \vv \varphi_\cG(a)\cdot y = 0, \psi^\cG(a)\cdot z =0\}$$
for all $\varphi\colon S\to T$, $\psi\colon U \to S$, and all
$y \in \cM(T)$, $z\in \cM(U)$. Note that elements of $I_\cM(S)$ satisfy additional conditions (both ``up" and ``down") beyond just acting trivially on $\cM(S)$.

The pairing formulas show directly
that $I_\cM(S)$ is a two-sided ideal in the ring $\cG(S)$, for every finite $G$-set $S$. 
We will check that $I_\cM$ is a sub-bifunctor
of $\cG$ by looking at the operations induced by $G$-maps
$\mu\colon V \to S$ and $\lambda\colon S \to W$ on an arbitrary element $a\in I_\cM(S)$. 

First  we consider $\lambda_\cG(a)\in \cG(W)$. Let $\varphi\colon
W\to T$ and $\psi\colon U \to W$ be any $G$-maps. We have
$$\varphi_\cG(\lambda_\cG(a))\cdot y= (\varphi\circ \lambda)_\cG(a)\cdot y = 0$$
by definition of $I_\cM(S)$. Let 
$${\vcenter
{\xymatrix
{X\ar[r]^{\tilde\lambda}\ar[d]_{\tilde\psi}&U\ar[d]^\psi\\S\ar[r]^\lambda&W}}}
$$
be the pullback square, and from (M1) we get
$$\psi^\cG(\lambda_\cG(a))\cdot z = \tilde\lambda_\cG(\tilde\psi^\cG(a))\cdot z = \tilde\lambda_\cG(\tilde\psi^\cG(a)\cdot \tilde\lambda^\cM(z)) = 0$$
so $\lambda_\cG(a) \in I_\cM(W)$.

Similarly, we must check that $\mu^\cG(a) \in I_\cM(V)$. Let 
$\varphi\colon V \to T$ and $\psi\colon U \to V$ be $G$-maps, and
note that 
$$\varphi_\cG(\mu^\cG(a))\cdot y = \varphi_\cM(\mu^\cG(a)\cdot \varphi^\cM(y))= 0$$
and $\psi^\cG(\mu^\cG(a))\cdot z = (\mu\circ \psi)^\cG(a) \cdot z = 0$.

We have now checked that $I_\cM\subset \cG$ is a sub-bifunctor, and
therefore $I_\cM$ is a Mackey functor and  a two-sided Green ideal in $\cG$. We define the \emph{quotient Green ring} $\cG_\cM = \cG/I_\cM$ to be the bifunctor whose value on objects is given by the quotient rings $\cG_\cM(S) = \cG(S)/I_\cM(S)$. It is straightforward to check that $\cG_\cM$ is a Green ring, since the formulas above show that the pairing $\cG \times \cG \to \cG$ restricts to pairings $I_\cM \times \cG \to I_\cM$ and
$\cG\times I_\cM \to I_\cM$ of Mackey functors. By construction,
$\cM$ is also a Green module over $\cG_\cM$.

\begin{definition}\label{definition of BQGR} 
Let $\cM$ be a Mackey functor. The \emph{Burnside quotient Green ring} of $\cM$ is the Green ring $\cA_\cM:=\cA/I_\cM$.
Let $\bq{\cM} \colon \cA \to \cA_{\cM}$ denote the epimorphism of Green rings given by the natural quotient map. 
\end{definition}

\begin{remark}
For $\cG$ a Green ring, the map $\iota\colon \cA\to \cG$ defined above by $a\mapsto a\cdot 1_S$ factors through $\bq{\cG}\colon \cA \to \cA_\cG$, and we
obtain a \emph{canonical} induced
homomorphism of Green rings  $\cA_{\cG} \to \cG$.   The next result shows that in fact $\cA_{\cG}=\Image\iota$, which gives a quick alternate definition of $\cA_\cG$  (for this observation, compare \cite[p.~207]{dress1}, \cite[p.~253]{oliver3}, \cite[p.~711]{htaylor2}, and \cite[p.~236]{bak2}).
\end{remark}

\begin{lemma} Let $\cG$ be a Green ring. Then the canonical homomorphism of Green rings, 
$\cA_\cG \to \cG$ is injective. 
\end{lemma}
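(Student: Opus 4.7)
The plan is to identify the kernel of $\iota_{\cG}\colon \cA \to \cG$ with the ideal $I_{\cG}\subset \cA$, which immediately gives the injectivity of the induced map $\cA_{\cG} = \cA/I_\cG \to \cG$. One direction, $I_\cG \subseteq \ker\iota_\cG$, is essentially built into the definition of $I_\cG$: given $a\in I_\cG(S)$, specialize the defining relation $\varphi_\cA(a)\cdot y = 0$ to $\varphi = \mathrm{id}_S$ and $y = 1_S$ to obtain $a\cdot 1_S = 0$, i.e.\ $\iota_\cG(a)=0$.

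For the reverse inclusion $\ker\iota_\cG \subseteq I_\cG$, suppose $a\in \cA(S)$ satisfies $\iota_\cG(a) = a\cdot 1_S = 0$ in $\cG(S)$. I want to show that for every $\varphi\colon S\to T$ and every $y\in\cG(T)$, the element $\varphi_\cA(a)\cdot y$ vanishes in $\cG(T)$, and dually that $\psi^\cA(a)\cdot z = 0$ for every $\psi\colon U\to S$ and $z\in\cG(U)$. The key observation is that, as shown in the excerpt, $\iota_\cG$ is a homomorphism of Green rings, hence in particular a morphism of Mackey functors, so it commutes with both the covariant and contravariant structure maps. Therefore
\begin{equation*}
\iota_\cG(\varphi_\cA(a)) = \varphi_\cG(\iota_\cG(a)) = \varphi_\cG(0) = 0,\qquad
\iota_\cG(\psi^\cA(a)) = \psi^\cG(\iota_\cG(a)) = \psi^\cG(0) = 0.
\end{equation*}

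Now I use the fact, also established in the excerpt, that the action of $\cA$ on $\cG$ is the one induced by $\iota_\cG$: for $b\in\cA(R)$ and $w\in\cG(R)$ we have $b\cdot w = \iota_\cG(b)\cdot w$, where the right-hand product takes place in the ring $\cG(R)$. Applying this with $b = \varphi_\cA(a)$, $w = y$ gives $\varphi_\cA(a)\cdot y = 0\cdot y = 0$; similarly $\psi^\cA(a)\cdot z = 0\cdot z = 0$. This shows $a \in I_\cG(S)$, completing the identification $\ker\iota_\cG = I_\cG$ and hence the injectivity of $\cA_\cG \to \cG$.

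There is no real obstacle here; the entire argument is a two-line consequence of the naturality of $\iota_\cG$ together with the factorization of the $\cA$-action through $\iota_\cG$. The only point needing a little care is checking that the definition of $I_\cG$ was set up symmetrically in both the "up" and "down" directions, which is precisely what makes both the covariant and the contravariant naturality of $\iota_\cG$ available.
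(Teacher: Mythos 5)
Your proof is correct and follows essentially the same route as the paper's: in both cases the heart of the matter is showing that $a\cdot 1_S=0$ forces $\varphi_\cA(a)\cdot y=0$ and $\psi^\cA(a)\cdot z=0$ for all $\varphi,\psi,y,z$, i.e.\ that $\ker\iota_\cG\subseteq I_\cG$. You phrase this via the naturality of $\iota_\cG$ as a homomorphism of Green rings together with the observation that the $\cA$-action on $\cG$ factors as $b\cdot w=\iota_\cG(b)\cdot w$ (which, while not literally stated in the excerpt, follows immediately from the displayed $\cA$-algebra identity $a\cdot(x\cdot y)=(a\cdot x)\cdot y$ applied to $x=1_S$); the paper instead invokes the pairing formulas directly with the shorthand ``it follows as above,'' but the content is the same.
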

\begin{proof} For each $G$-set $S$, the natural transformation of bifunctors in the statement maps $\cA_\cG(S)\to \cG(S)$  by the ring homomorphism $a\mapsto a\cdot 1_S$, where $a\in \cA(S)$ and $1_S\in \cG(S)$ is the unit.
If $a\cdot 1_S =0$, and $\varphi\colon S\to T$, $\psi\colon U \to S$
are $G$-maps, then it follows as above that $\varphi_\cA(a)\cdot 1_T =0$ and $\psi^\cA(a)\cdot 1_U = 0$, Therefore 
$\{a\in \cA(S)\vv a\cdot 1_S = 0\}\subset I_\cG(S)$, and the ring homomorphism
$\cA_\cG(S)\to \cG(S)$ is injective. 
\end{proof}

We will explore Definition \ref{definition of BQGR} 
 by considering the Burnside quotient Green rings for filtrations of Mackey functors. 

\begin{definition}  If $\cM$ and $\cN$ are Mackey functors, we say that
$\cM$ is a \emph{sub-functor} of $\cN$ (respectively $\cN$ is a \emph{quotient functor} of $\cM$) if there is a natural transformation $\Theta\colon \cM \to \cN$ such that for each object
$S \in \DG$ the function $\Theta_S \colon \cM(S) \to \cN(S)$ is an injective 
(respectively, surjective) homomorphism of abelian groups. We say that $\cM$ is a \emph{sub-quotient}
of $\cN$  if there is a finite sequence of Mackey functors $\cM=\cL_0,
\cL_1, \dots, \cL_r=\cN$ such that each $\cL_i$ is either a sub-functor or a quotient functor of $\cL_{i+1}$, for $i = 0, \dots, r-1$. Note that
the relation ``$\cM$ is a sub-quotient of $\cN$" is a transitive relation.
\end{definition}

\begin{example}    If $\Theta\colon \cG \to \cK$ is a  homomorphism of Green rings, then we can regard $\cK$ as a Green module over $\cG$. Furthermore, 
$\ker\Theta = I_{\cK}\subset \cG$, 
and there is an induced homomorphism $\cG_{\cK} \to \cK$ of Green rings.  If $\cK$ is a quotient Green ring of $\cG$, then  $\cK =  \cG_{\cK} =\cG/I_{\cK}$.
\end{example}

\begin{lemma} Let $\cG$ be a Green ring and $\cM$ a Green module over $\cG$. Then the Burnside quotient Green ring $\cA_M$ is a quotient of $\cA_\cG$, and isomorphic to a sub-quotient of $\cG$.
\end{lemma}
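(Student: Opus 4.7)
The plan is to show two things: (i) the Green ideal $I_\cG \subset \cA$ is contained in $I_\cM \subset \cA$, which gives the surjection $\cA_\cG \twoheadrightarrow \cA_\cM$, and then (ii) combine this with the preceding lemma (which tells us $\cA_\cG$ embeds in $\cG$) to display $\cA_\cM$ as a quotient of a sub-Green ring of $\cG$.

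The key preliminary observation is that the canonical $\cA$-action on the Mackey functor $\cM$ factors through $\iota\colon \cA \to \cG$. Writing a generator $a \in \cA(S)$ as $a = [(X,f)]$ with $f\colon X \to S$, the definition of the $\cA$-action gives $a\cdot x = f_\cM(f^\cM(x))$ for $x \in \cM(S)$, while $\iota(a) = a\cdot 1_S = f_\cG(f^\cG(1_S)) = f_\cG(1_X)$. Applying the pairing formula for the $\cG$-module structure on $\cM$ with $\varphi = f$ and $y = 1_X$ then yields
\[
\iota(a)\cdot x \;=\; f_\cG(1_X)\cdot x \;=\; f_\cM\bigl(1_X\cdot f^\cM(x)\bigr) \;=\; f_\cM(f^\cM(x)) \;=\; a\cdot x,
\]
so the two actions agree on $\cM$.

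Now take $a \in I_\cG(S)$ and any $G$-maps $\varphi\colon S \to T$, $\psi\colon U \to S$ together with $y \in \cM(T)$, $z \in \cM(U)$. Using the factorization through $\iota$ and the fact that $\iota(b) = b\cdot 1_?$ for any $b \in \cA(?)$, we compute
\[
\varphi_\cA(a)\cdot y \;=\; \iota(\varphi_\cA(a))\cdot y \;=\; \bigl(\varphi_\cA(a)\cdot 1_T\bigr)\cdot y,
\]
and the inner factor vanishes in $\cG(T)$ by definition of $I_\cG$; the same argument with $\psi^\cA$ in place of $\varphi_\cA$ shows $\psi^\cA(a)\cdot z = 0$. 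Hence $a \in I_\cM(S)$, so $I_\cG \subset I_\cM$ as sub-bifunctors of $\cA$. Consequently the quotient map $\cA \twoheadrightarrow \cA_\cM$ factors through $\cA \twoheadrightarrow \cA_\cG$, giving the required surjection $\cA_\cG \twoheadrightarrow \cA_\cM$ of Green rings. Finally, the preceding lemma identifies $\cA_\cG$ with the image of $\iota$ in $\cG$, and under this identification $\cA_\cM \cong \cA_\cG/(I_\cM/I_\cG)$ is realized as a quotient of a sub-Green ring of $\cG$, i.e.\ a sub-quotient.

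The only non-routine step is the factorization of the $\cA$-action through $\iota$; once this compatibility is recorded, the inclusion $I_\cG \subset I_\cM$ is essentially formal, so I do not expect a serious obstacle.
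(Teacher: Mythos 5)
Your proof is correct and follows essentially the same route as the paper: both reduce to showing $I_\cG\subset I_\cM$, and both do so via the computation $\varphi_\cA(a)\cdot y = (\varphi_\cA(a)\cdot 1_T)\cdot y = 0$ (and its $\psi^\cA$ analogue). Your preliminary paragraph making explicit the factorization of the $\cA$-action through $\iota$ is a more careful justification of the associativity $b\cdot(g\cdot m)=(b\cdot g)\cdot m$ that the paper uses implicitly, but it is the same idea.
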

\begin{proof}
Since $\cA_\cG$ is a sub-Green ring of $\cG$, we just need to check that  $\cA_\cM$ is a quotient Green ring of $\cA_\cG$ under the natural projection from $\cA$. This is equivalent to the statement that $I_\cG \subset I_\cM$. Let $a\in I_\cG(S)$, and consider $G$-maps $\varphi\colon S\to T$ and $\psi\colon U \to S$.
For any $y \in \cM(T)$, 
$$\varphi_\cA(a)\cdot y = \varphi_\cA(a)\cdot(1_T\cdot y) = 
(\varphi_\cA(a)\cdot 1_T)\cdot y = 0$$
since $1_T \in \cG(T)$.  Similarly, for any $z\in \cM(U)$, 
$$\psi^\cA(a)\cdot z = \psi^\cA(a)\cdot 1_U\cdot z = 0$$
and we see that $a\in I_\cM(S)$.
\end{proof}
\begin{lemma}  
 Let $\cM$ and $\cN $ be Mackey functors, with $\cM$  a sub-quotient
 of $\cN$. 
 Then there is a surjective homomorphism of Green rings $\fs\colon \cA_\cN \to \cA_\cM$ such that
 $\fs\circ \bq{\cN}  = \bq{\cM}$.
\end{lemma}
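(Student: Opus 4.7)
The plan is to reduce to the two building blocks of the sub-quotient relation and show in each case that the Green ideal $I_\cN\subset \cA$ is contained in $I_\cM$; this containment immediately factors $\bq{\cM}$ through $\bq{\cN}$ via a surjection $\fs\colon \cA_\cN=\cA/I_\cN \to \cA/I_\cM=\cA_\cM$ of Green rings, and the assertion $\fs\circ \bq{\cN}=\bq{\cM}$ follows at once from the universal property of the quotient.

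By transitivity and composition of surjections of Green rings, it suffices to treat a single step in the filtration; so assume we have a natural transformation of Mackey functors $\Theta\colon \cM\to \cN$ (sub-functor case) or $\Theta\colon \cN\to \cM$ (quotient-functor case). The key tool is the remark in the excerpt that any homomorphism of Mackey functors is automatically $\cA$-linear, i.e.\ $\Theta_T(a\cdot y) = a\cdot \Theta_T(y)$ for all $a\in \cA(T)$ and all $y$ in the appropriate module. Given this, pick $a\in I_\cN(S)$ and arbitrary $G$-maps $\varphi\colon S\to T$ and $\psi\colon U\to S$; we must verify $\varphi_\cA(a)\cdot y = 0$ and $\psi^\cA(a)\cdot z = 0$ for every $y\in \cM(T)$ and $z\in \cM(U)$.

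In the sub-functor case, apply $\Theta$ and use naturality:
$$\Theta_T\bigl(\varphi_\cA(a)\cdot y\bigr)=\varphi_\cA(a)\cdot \Theta_T(y)=0$$
because $\Theta_T(y)\in \cN(T)$ and $a\in I_\cN(S)$; injectivity of $\Theta_T$ forces $\varphi_\cA(a)\cdot y=0$. The argument for $\psi^\cA(a)\cdot z$ is identical. In the quotient-functor case, given $y\in \cM(T)$ lift to $\tilde y\in \cN(T)$ with $\Theta_T(\tilde y)=y$; then
$$\varphi_\cA(a)\cdot y=\varphi_\cA(a)\cdot \Theta_T(\tilde y)=\Theta_T\bigl(\varphi_\cA(a)\cdot \tilde y\bigr)=\Theta_T(0)=0,$$
and likewise for the contravariant part. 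Either way $a\in I_\cM(S)$, giving $I_\cN\subset I_\cM$.

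Iterating along the chain $\cM=\cL_0,\dots,\cL_r=\cN$ produces a sequence of surjective Green ring homomorphisms $\cA_{\cL_{i+1}}\to \cA_{\cL_i}$; their composition is the desired $\fs\colon \cA_\cN\to \cA_\cM$, and the identity $\fs\circ \bq{\cN}=\bq{\cM}$ holds by construction at every stage. The only point requiring care is the $\cA$-linearity of Mackey functor homomorphisms, but this is already supplied by the earlier discussion, so no genuine obstacle remains.
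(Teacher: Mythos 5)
Your argument is correct and follows essentially the same route as the paper: reduce to the sub-functor and quotient-functor cases, establish $I_\cN\subset I_\cM$ in each by using the $\cA$-linearity of Mackey functor homomorphisms together with injectivity (respectively, surjectivity and lifting) of $\Theta$, and then compose along the chain by induction. The paper's proof is the same in both structure and detail.
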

\begin{proof} We will establish this result for sub-functors and quotient functors, and note that the general sub-quotient case follows by an inductive argument on the length of the chain joining $\cM$ to 
$\cN$. 

Suppose first that   $\Theta \colon \cM \to \cN$ is a natural transformation, with $\Theta_S\colon \cM(S) \to \cN(S)$ injective for all finite $G$-sets $S$. Let $a\in I_\cN(S)$
and let $\varphi\colon S\to T$, $\psi\colon U \to S$ be $G$-maps.
Then for any $y \in \cM(T)$, $\Theta_T(\varphi_\cA(a)\cdot y)=
\varphi_\cA(a)\cdot(\Theta_T(y))=0$ since $\Theta$ is a $\cA$-Green module map. Similarly, for any $z\in \cM(U)$,
$\Theta_U(\psi^\cA(a)\cdot z)= \psi^\cA(a)\cdot(\Theta_U(z))=0$.
Since $\Theta_T$ and $\Theta_U$ are injective, it follows that
$a\in I_\cM(S)$, and $I_\cN \subset I_\cM$ so that $\cA_\cN$ maps onto $\cA_\cM$.

Next suppose that 
$\Theta \colon \cN \to \cM$ is a natural transformation, with $\Theta_S\colon \cN(S) \to \cM(S)$ surjective for all finite $G$-sets $S$. If $a\in I_\cN(S)$, we check that $\varphi_\cA(a)\cdot y =0 $ and $\psi^\cA(a)\cdot z=0$, for all 
$y \in\cM(T)$ and all $z\in \cM(U)$, by using the surjectivity of $\Theta_T$ and $\Theta_U$, and the compatibility of
$\Theta$ with the $\cA$-module structures on $\cM$ and $\cN$.
Therefore $I_\cN \subset I_\cM$.
\end{proof}

In general, if $\cM$ is a sub-Mackey functor of $\cN$ it is not true that $I_\cM \subset I_\cN$,  so there is no natural map in the other direction from
$\cA_\cM$ onto $\cA_\cN$, but here is one more situation that works.  

We say that $\cM$ is
a \emph{full lattice} in $\cN$ if there is a natural transformation
$\Theta\colon \cM \to \cN$ such that the induced maps
$\Theta_S^\ast\colon \Hom(\cN(S) ,\cN(S)) \to \Hom(\cM(S), \cN(S))$
are injective for all finite $G$-sets $S$. Note that $\cM$ need not be a sub-Mackey functor of $\cN$ for this condition to hold. 

\begin{lemma}   Let  $\cM$ and $\cN$ be Mackey functors, with $\cM$ be a full lattice in $\cN$. Then 
there exists a surjective homomorphism of Green rings $\fg\colon \cA_\cM \to \cA_\cN$ such that
$\fg\circ \bq{M} = \bq{N}$.
If $\cM$ is also a sub-functor of $\cN$, then $\fg$ is an isomorphism and the inverse to the $\fs\colon \cA_\cN \to \cA_\cM$ described
previously.
\end{lemma}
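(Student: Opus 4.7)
The plan is to establish the containment $I_\cM \subset I_\cN$ of Green ideals in $\cA$. Once this is in hand, the universal property of the quotient produces the desired surjective homomorphism $\fg \colon \cA_\cM = \cA/I_\cM \twoheadrightarrow \cA/I_\cN = \cA_\cN$ of Green rings, and the identity $\fg \circ \bq{\cM} = \bq{\cN}$ is automatic because both sides coincide with the canonical quotient map from $\cA$.

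The heart of the argument is extracting vanishing of the pairing from the full lattice hypothesis. Fix $a \in I_\cM(S)$ and a $G$-map $\varphi \colon S \to T$. By bilinearity of the pairing, the assignment $L(y) = \varphi_\cA(a) \cdot y$ defines a homomorphism of abelian groups $L \colon \cN(T) \to \cN(T)$. The full lattice condition applied at $T$ asserts that $\Theta_T^\ast \colon \Hom(\cN(T), \cN(T)) \to \Hom(\cM(T), \cN(T))$ is injective, so $L = 0$ whenever $L \circ \Theta_T = 0$. Since $\Theta$ is a homomorphism of Mackey functors, it respects the $\cA$-action by the earlier remark, and hence for every $x \in \cM(T)$,
\[
L(\Theta_T(x)) \;=\; \varphi_\cA(a) \cdot \Theta_T(x) \;=\; \Theta_T\bigl(\varphi_\cA(a) \cdot x\bigr) \;=\; 0,
\]
using $a \in I_\cM(S)$ in the last step. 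Thus $L = 0$, so $\varphi_\cA(a) \cdot y = 0$ for every $y \in \cN(T)$. The symmetric argument applied to an arbitrary $G$-map $\psi \colon U \to S$ and $z \in \cN(U)$ gives $\psi^\cA(a) \cdot z = 0$, whence $a \in I_\cN(S)$.

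For the final assertion, suppose $\cM$ is also a sub-functor of $\cN$. The preceding lemma supplies a surjection $\fs \colon \cA_\cN \to \cA_\cM$ with $\fs \circ \bq{\cN} = \bq{\cM}$. Composing,
\[
\fs \circ \fg \circ \bq{\cM} \;=\; \fs \circ \bq{\cN} \;=\; \bq{\cM}, \qquad \fg \circ \fs \circ \bq{\cN} \;=\; \fg \circ \bq{\cM} \;=\; \bq{\cN}.
\]
Because $\bq{\cM}$ and $\bq{\cN}$ are epimorphisms, this forces $\fs \circ \fg$ and $\fg \circ \fs$ to be the respective identities, so $\fg$ and $\fs$ are mutually inverse isomorphisms.

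The only slightly delicate point is the translation of the full lattice hypothesis into a vanishing statement for the pairing: one must notice that the injectivity of $\Theta_T^\ast$ on $\Hom(\cN(T), \cN(T))$ lets one test the vanishing of the endomorphism $y \mapsto \varphi_\cA(a) \cdot y$ by precomposition with $\Theta_T$, at which point the ideal membership $a \in I_\cM(S)$ together with $\cA$-linearity of $\Theta$ delivers zero immediately. Everything else is essentially formal manipulation with quotients.
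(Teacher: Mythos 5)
Your proof is correct and follows the same route as the paper: reduce the existence of $\fg$ to the containment $I_\cM \subset I_\cN$, and obtain that containment by testing the endomorphism $y \mapsto \varphi_\cA(a)\cdot y$ against the image of $\Theta_T$ via the injectivity hypothesis on $\Theta_T^\ast$. You make explicit two points the paper leaves terse: that $\varphi_\cA(a)\cdot \Theta_T(x) = \Theta_T(\varphi_\cA(a)\cdot x)$ holds because $\Theta$ is a homomorphism of Mackey functors and hence $\cA$-linear (the paper only asserts "this homomorphism is zero on the image of $\Theta_T$"), and you supply the verification that $\fg$ and $\fs$ are mutually inverse via right-cancellation by the epimorphisms $\bq{\cM}$, $\bq{\cN}$, which the paper does not spell out at all. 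Both additions are welcome, and no gaps remain.
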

\begin{proof}  Let $\varphi\colon S \to T$ be a $G$-map. For each $a\in\cA(S)$ we can consider the action map
$y \mapsto \varphi_\cA(a)\cdot y$ as an element of $\Hom(\cN(T),\cN(T))$. However if  $a\in I_\cM(a)$, this homomorphism is zero on the image of $\Theta_T$, and therefore it vanishes identically. Similarly, we check that $\psi^\cA(a)\cdot z = 0$ for all $z\in \cN(U)$ and any $G$-map $\psi\colon U \to S$. Therefore
$I_\cM \subset I_\cN$.
\end{proof}

\subsection{Amitsur complexes}
Dress proves computation results for Mackey functors via the contractibility of certain chain complexes. Let $X$, $Y$ be finite $G$-sets, and define a semi-simplicial set $Am(X,Y)$ inductively. 
Let $Am_0(X,Y) = Y$ and $Am_r(X,Y) = X \times Am_{r-1}(X,Y)$
for $r\geq 1$. There are $G$-maps $$d_i^r\colon Am_r(X,Y) \to Am_{r-1}(X,Y)$$ for $0\leq i < r$, defined by setting $d_0^r$ as the projection
$X \times Am_{r-1}(X,Y) \to Am_{r-1}(X,Y)$, and for $i>0$ by
$d^r_i= 1_X \times d^{r-1}_{i-1}$. 

\begin{definition} Let $\cM$ be a Mackey functor. For given finite $G$-sets $X$, $Y$, 
the \emph{Amitsur complex} $\cM\bigl(Am(X,Y)\bigr)$ is
the chain bi-complex  whose chain group in dimension $r$ is $\cM\bigl(Am_r(X,Y)\bigr)$,
with boundary operators $\bd_r = \sum (-1)^i [d^r_i]_\cM$ and
 $\delta^r= \sum (-1)^i [d^r_i]^\cM$ for $r\geq 0$. and zero
otherwise. 
\end{definition}

This construction has certain naturality properties. 
\begin{lemma} Let $\cM$ be  a Mackey functor. The Amitsur complex gives a bifunctor
$$\cM(Am(\_\, ,\_))\colon \DG \times \DG \to Chain(\ab)$$
where $Chain(\ab)$ denotes the category of chain complexes of abelian groups.
\end{lemma}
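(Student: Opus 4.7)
The proof is a routine unwinding of definitions. The plan is to first verify that $(X, Y) \mapsto Am(X, Y)$ gives a bifunctor from $\DG \times \DG$ into semi-simplicial $G$-sets, and then to apply $\cM$ termwise so that the resulting chain maps inherit the bifunctorial structure automatically.

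For $G$-maps $f\colon X_1 \to X_2$ and $g\colon Y_1 \to Y_2$, I will define the induced $G$-map $Am_r(f, g)\colon Am_r(X_1, Y_1) \to Am_r(X_2, Y_2)$ by the iterated product $f \times f \times \cdots \times f \times g$, with $r$ copies of $f$, using the identification $Am_r(X, Y) = X^r \times Y$ obtained by unwinding the inductive definition. Functoriality in the pair $(f, g)$ is immediate from this formula. The substantive verification is that $Am_r(f, g)$ commutes with every face operator $d^r_i$. For $i = 0$ this is automatic, since $d^r_0$ is the projection $X \times Am_{r-1}(\_,\_) \to Am_{r-1}(\_,\_)$, and projections always commute with product maps. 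For $i > 0$, the relation $d^r_i = 1_X \times d^{r-1}_{i-1}$ reduces the check to the case of $Am_{r-1}$, so induction on $r$ completes the verification.

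With compatibility of $Am(f, g)$ with the face maps in place, I apply the bifunctor $\cM$ levelwise. Because $\cM$ is itself a bifunctor, each $G$-map $Am_r(f, g)$ yields a covariant map $[Am_r(f, g)]_\cM$ and a contravariant map $[Am_r(f, g)]^\cM$ between the chain groups $\cM(Am_r(X_i, Y_i))$, and both respect composition in the pair $(f, g)$. Commutation with the face operators at the $G$-set level, together with the alternating-sum definitions $\partial_r = \sum (-1)^i [d^r_i]_\cM$ and $\delta^r = \sum (-1)^i [d^r_i]^\cM$, immediately produces chain maps compatible with both differentials of the bi-complex. Assembling these over all $r$ gives the desired bifunctor $\cM(Am(\_,\_))\colon \DG \times \DG \to Chain(\ab)$, inheriting both variances of $\cM$ in each of the two arguments.

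The main obstacle is purely notational: one must track two variances of $\cM$ in each of two semi-simplicial variables alongside the two boundary operators $\partial$ and $\delta$. No genuinely difficult step arises, since the argument reduces entirely to functoriality of $\cM$ and the formal structure of iterated products; this is presumably why the authors state the result as a lemma and defer its verification.
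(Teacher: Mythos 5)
The paper states this lemma without proof, treating it as a routine naturality observation, so there is no paper argument to compare against. Your proposal supplies the expected verification and is correct in substance: identifying $Am_r(X,Y)$ with $X^r\times Y$, defining $Am_r(f,g)=f^{\times r}\times g$, and checking inductively that $Am_r(f,g)$ commutes with every face map $d^r_i$ (trivially for $i=0$ since $d^r_0$ is a projection, and via $d^r_i = 1_X\times d^{r-1}_{i-1}$ for $i>0$) is exactly what is needed; functoriality of $\cM_\ast$ and $\cM^\ast$ then promotes these $G$-maps to covariant and contravariant chain maps.

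One caution on the wording of your final step. The phrase ``chain maps compatible with both differentials of the bi-complex'' should be understood as: the covariant maps $[Am_r(f,g)]_\cM$ commute with $\bd_\ast$, and the contravariant maps $[Am_r(f,g)]^\cM$ commute with $\delta^\ast$ --- one differential per variance. It is \emph{not} generally the case that a covariant map $[Am_r(f,g)]_\cM$ commutes with the contravariant differential $\delta^\ast$; that would require the squares formed by $Am_r(f,g)$ and $d^r_i$ to be pullbacks so that (M1) applies, which fails for general $(f,g)$. Since the lemma only asserts that $\cM(Am(\_,\_))$ is a bifunctor in the Dress sense (a covariant/contravariant pair agreeing on objects), the weaker reading is all that is required, and your argument does establish it.
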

For any Mackey functor $\cM$, and any finite $G$-set $S$, let 
$\cM_S$ denote the Mackey functor defined by $\cM_S(T) = \cM(S\times T)$, for any finite $G$-set $T$. There are natural transformations  and $\Theta_S^\cM\colon \cM \to \cM_S$ and $\Theta^S_\cM\colon \cM_S \to \cM$ of Mackey functors
 induced by the  projection maps $S\times T \to T$.
 Dress says that $\cM$ is $S$-\emph{injective} (respectively $S$-\emph{projective})  if $\Theta_S^\cM$
 is split-injective (respectively $\Theta^S_\cM$ is split surjective).
 \begin{lemma}[{\cite[Prop.~1.1$^\prime$]{dress2}}]
 A Mackey functor $\cM$ is $S$-injective if and only if it is $S$-projective.
  \end{lemma}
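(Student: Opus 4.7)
My plan is to build an explicit splitting in one direction out of a splitting in the other, using only naturality of Mackey morphisms and the pullback axiom \textbf{(M1)}. The geometric engine consists of two $G$-maps between products of $S$ and $T$: the projection $p\colon S\times T\to T$ and the partial diagonal $\Delta\colon S\times T\to S\times S\times T$ defined by $(s,t)\mapsto(s,s,t)$. The crux of the argument is the identity
\[
(\mathrm{id}_S\times p)\circ \Delta \;=\; \mathrm{id}_{S\times T},
\]
which will let me collapse two Mackey transfers to the identity.

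Suppose first that $\cM$ is $S$-projective, with Mackey natural splitting $\Phi\colon \cM\to \cM_S$ satisfying $\Theta^S_\cM\circ \Phi=\mathrm{id}_\cM$. I define $\Psi\colon \cM_S\to \cM$ by
\[
\Psi_T \;:=\; p_\cM\circ \Delta^\cM\circ \Phi_{S\times T}\colon \cM(S\times T)\to \cM(T).
\]
To verify $\Psi\circ \Theta_S^\cM=\mathrm{id}$, naturality of $\Phi$ applied to $p\colon S\times T\to T$ rewrites $\Phi_{S\times T}\circ p^\cM$ as $(\mathrm{id}_S\times p)^\cM\circ \Phi_T$; contravariant functoriality and the diagonal identity then force $\Delta^\cM\circ (\mathrm{id}_S\times p)^\cM=\mathrm{id}$, leaving exactly $p_\cM\circ \Phi_T=\Theta^S_\cM\circ \Phi_T=\mathrm{id}$. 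The converse is perfectly symmetric: from a retraction $\Psi$ of $\Theta_S^\cM$, the assignment $\Phi_T:=\Psi_{S\times T}\circ \Delta_\cM\circ p^\cM$ yields a section of $\Theta^S_\cM$ by the same calculation with covariant transfers replacing contravariant ones.

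What remains is to check that the constructed $\Psi$ (respectively $\Phi$) is actually a Mackey functor natural transformation. For a $G$-map $f\colon T_1\to T_2$, contravariant naturality reduces, after invoking naturality of $\Phi$ and the evident equality $(\mathrm{id}_S\times\mathrm{id}_S\times f)\circ \Delta_{T_1}=\Delta_{T_2}\circ (\mathrm{id}_S\times f)$, to the identity
\[
p_\cM\circ (\mathrm{id}_S\times f)^\cM \;=\; f^\cM\circ p_\cM,
\]
which is precisely \textbf{(M1)} for the obvious pullback square with legs $p$ and $f$. The covariant naturality square is handled dually, this time appealing to \textbf{(M1)} for the pullback square whose right edge is $\mathrm{id}_S\times\mathrm{id}_S\times f$ and whose bottom edge is $\Delta_{T_2}$ (whose pullback I identify as $S\times T_1$ with legs $\mathrm{id}_S\times f$ and $\Delta_{T_1}$).

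The main obstacle is purely notational: keeping straight which transfer belongs to $\cM$ and which to $\cM_S$, and distinguishing among the several copies of $S$ living in $S\times S\times T$. Once the diagonal identity above is in hand, both the splitting verification and the two naturality checks reduce to short symbolic manipulations, so there is no substantial obstacle of content. If desired, the argument could be recast more invariantly by observing that $\Delta$ exhibits $\mathrm{id}_S\times p$ as a split epimorphism of $G$-sets, and then noting that any split epimorphism of $G$-sets induces a split epimorphism after applying any Mackey functor — but the hands-on version above is more efficient.
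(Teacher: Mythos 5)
Your argument is exactly the paper's: you construct the splitting $\Psi_T = p_\cM\circ(\Delta\times 1)^\cM\circ\Phi_{S\times T}$ (matching the paper's $\widetilde\Phi(T) = \Theta^S_{\cM(T)}\circ(\Delta\times 1)^\cM\circ\Phi_{S\times T}$) and verify $\Psi\circ\Theta_S^\cM=\mathrm{id}$ via naturality of $\Phi$ together with the identity $(1\times p)\circ(\Delta\times 1)=\mathrm{id}_{S\times T}$. The only difference is that you carry out the check that $\Psi$ is a genuine Mackey morphism (via the two \textbf{(M1)} pullback squares), which the paper leaves to the reader with ``One can check''; your verification is correct.
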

  \begin{proof}
  Suppose that $\cM$ is $S$-projective, so that $\Theta^S_\cM$
  is split-injective. Let $\Phi\colon \cM \to \cM_S$ be a natural transformation such that $\Theta^S_\cM\circ \Phi = Id_\cM$ (the identity natural transformation on $\cM$). If $\Delta\colon S \to S \times S$
  denotes the diagonal map and $p\colon S\times T \to T$ the second factor projection, we notice that 
  $$S\times T \xrightarrow{\Delta\times 1} S\times S\times T
  \xrightarrow{1\times p} S\times T$$
  is just the identity map on $S\times T$. It follows that
  $$\Theta^S_{\cM(T)}\circ (\Delta\times 1)^\cM\circ \Phi_{S\times T}\circ \Theta_S^{\cM(T)} = Id_{\cM(T)}$$
  for any finite $G$-set $T$. One can check that the formula
  $$\widetilde\Phi(T) := \Theta^S_{\cM(T)}\circ (\Delta\times 1)^\cM\circ \Phi_{S\times T}$$
  defines a natural transformation of bi-functors splitting
$\Theta_S^\cM$ and hence $\cM$ is $S$-injective. The converse is similar.
    \end{proof}
 Dress now proves that, for any finite $G$-set $Y$,  both Amitsur complexes $(\cM_\ast(Am(S, Y), \bd)$ and
 $(\cM^\ast(Am(S, Y), \delta)$ are contractible (we say $\cM$ is $S$-\emph{computable}), whenever $\cM$ is $S$-injective or $S$-projective. In particular, for $Y = \bullet$ there are exact sequences
$$
\xymatrix@R-15pt{
\dots\ar[r]^(.3){\bd_3}&\  \cM(S\times S) \ar[r]^{\bd_2}& \cM(S) \ar[r]^{\bd_1}& \cM(\bullet)\ar[r]& 0\\
0 \ar[r]& \cM(\bullet) \ar[r]^{\delta_1}& \cM(S) \ar[r]^{\delta_2}& \cM(S\times S) \ar[r]^(.6){\delta_3}& \dots}
$$
which exhibit $\cM(\bullet)$ as a limit of induction or restriction maps respectively. 

\medskip
Here is the main theorem of Dress induction theory:

\begin{proposition}[{\cite[Prop.~1.2]{dress2}}]\label{prop: Dress main thm} Let $\cG$ be a Green ring and $S$ be a finite $G$-set. Then the following conditions are equivalent:
\begin{enumerate}
\item The map $\varphi_\cG\colon \cG(S) \to \cG(\bullet)$ associated to the projection $\varphi\colon S \to \bullet$ is surjective.
\item $\cG$ is $S$-injective.
\item All $\cG$-modules are $S$-injective.
\end{enumerate}
\end{proposition}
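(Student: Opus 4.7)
The plan is to prove $(3)\Rightarrow(2)\Rightarrow(1)\Rightarrow(3)$. The first implication is immediate since $\cG$ is a $\cG$-module over itself. For $(2)\Rightarrow(1)$, the preceding lemma equates $S$-injectivity and $S$-projectivity, so there is a natural transformation $\Phi\colon \cG \to \cG_S$ splitting $\Theta^S_\cG$. Evaluated at $T=\bullet$, the map $\Theta^S_\cG$ is precisely $\varphi_\cG\colon \cG(S)\to\cG(\bullet)$, so $\Phi_\bullet(1_\bullet)\in \cG(S)$ is a preimage of $1_\bullet$ and $\varphi_\cG$ is surjective.

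The substantive content is $(1)\Rightarrow(3)$. Given $x_S\in \cG(S)$ with $\varphi_\cG(x_S) = 1_\bullet$, I would construct, for an arbitrary $\cG$-module $\cM$, a candidate splitting $\Phi^\cM\colon \cM \to \cM_S$ of $\Theta^S_\cM$ by the formula
$$\Phi^\cM_T(m) = q_T^\cG(x_S) \cdot p_T^\cM(m),$$
where $p_T\colon S\times T \to T$ and $q_T\colon S\times T \to S$ are the projections. The splitting identity $(p_T)_\cM(\Phi^\cM_T(m)) = m$ follows by applying the third pairing formula to push $(p_T)_\cM$ through the product, which reduces matters to $(p_T)_\cG(q_T^\cG(x_S)) = 1_T$. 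Axiom (M1) applied to the pullback square with corners $S\times T,\,S,\,T,\,\bullet$ and maps $p_T,q_T,\pi_T,\varphi$ identifies $(p_T)_\cG\circ q_T^\cG$ with $\pi_T^\cG\circ \varphi_\cG$, which sends $x_S$ to $\pi_T^\cG(1_\bullet) = 1_T$ because restriction preserves the unit.

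The main obstacle is verifying that $\Phi^\cM$ is natural in $T$, i.e.\ that it commutes with both the covariant $f_\cM$ and contravariant $f^\cM$ operations for every $G$-map $f\colon T'\to T$, and simultaneously with the corresponding $(1\times f)_\cM$ and $(1\times f)^\cM$ on $\cM_S$. The contravariant case is handled by distributing $(1\times f)^\cM$ across the product via the first pairing formula and then invoking functoriality of $q^\cG$ and $p^\cM$. The covariant case requires (M1) applied to the pullback of $f$ against $p_T$ to rewrite $p_T^\cM\circ f_\cM = (1\times f)_\cM\circ p_{T'}^\cM$, followed by the second pairing formula to move $(1\times f)_\cM$ outside the product; what remains matches $(1\times f)_\cM(\Phi^\cM_{T'}(m))$ after a second application of contravariant functoriality. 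These verifications are mechanical but demand careful bookkeeping of the relevant pullback squares. Once $\Phi^\cM$ is shown natural, $\cM$ is $S$-projective, hence $S$-injective by the preceding lemma.
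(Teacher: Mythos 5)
The paper does not prove this proposition; it simply cites Dress \cite[Prop.~1.2]{dress2} and moves on. Your argument is a correct reconstruction of Dress's original proof: the cycle $(3)\Rightarrow(2)\Rightarrow(1)$ is immediate, and for $(1)\Rightarrow(3)$ your candidate splitting $\Phi^\cM_T(m) = q_T^\cG(x_S)\cdot p_T^\cM(m)$ is exactly the right formula, the splitting identity does reduce via the third pairing axiom and (M1) to $(p_T)_\cG q_T^\cG(x_S)=\pi_T^\cG(1_\bullet)=1_T$, and the naturality checks (contravariant via the first pairing axiom plus functoriality; covariant via (M1) for the pullback of $f$ against $p_T$ followed by the second pairing axiom) go through as you indicate.
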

This result focuses attention on the task of finding a suitable Green ring which acts on $\cM$, and then checking property (i). We remark that the Burnside ring $\cA$ acts on any Mackey functor, but $\cA$ is $S$-injective only if $\bullet \subset S$. Hence the Burnside ring itself has no useful induction properties.

\section{Dress generating sets}\label{eight}
In the classical Mackey setting of $G$-functors given by Green \cite{green1}, computation is expressed in terms of families. A \emph{family of subgroups} $\cF$ of $G$ is a collection of subgroups closed under conjugation and taking subgroups. For any  finite $G$-set $X$ let $\cF(X)$ denote the family generated by the isotropy subgroups of $X$. For example, the family $\cF(\bullet) = \{\text{All}\}$.
 Conversely, given a family $\cF$ of subgroups, we can form the disjoint union $X(\cF)$ of $G$-sets $G/H$, one for each conjugacy class of maximal elements in $\cF$, under the partial ordering from subgroup inclusion. For example, $X(\{\text{All}\}) = \bullet$. We say that a family of subgroups $\cF$ \emph{contracts} a Mackey functor $\cM$ if and only if $\cM$ is $X(\cF)$-projective or $X(\cF)$-injective.

We have seen that a good strategy for computing a Mackey functor $\cM$ is to study the  Green rings acting on $\cM$. We will apply this strategy to the Burnside quotient Green ring $\cA_\cM$ of $\cM$.

\begin{definition}
Let $\cG$ be a Green ring. A finite $G$-set $X$ is a
\emph{generating set} for $\cG$ if the natural map
$\cG(X) \to \cG(\bullet)$ is surjective (or equivalently, if 
$1_\bullet \in \Image\{ \cG(X) \to \cG(\bullet)\}$).
\end{definition}
Note that by \cite[Prop.~1.2]{dress2}, $X$ is a generating set for $\cG$ if and only if $\cG$ is $X$-injective or $X$-projective. 
It is not true in general that a generating set for a Green ring $\cG$ is also a generating set for the sub-Green ring $\cA_\cG$.
To obtain generation for $\cA_\cG$ it is usually necessary to enlarge the generating set.

For $H$ a finite group and $p$ a prime, let 
$$O^p(H) = \bigcap\{H_0\triangleleft H \vv H/H_0 \text{\ is a\ } 
\text{$p$-group}\}$$
Notice that $O^p(H)$ is a characteristic subgroup of $p$-power index in $H$, and
$O^p\bigl((O^p(H)\bigr) = O^p(H)$. 
\begin{definition} Let $\cF$ be a family of subgroups of $G$ and $p$ a prime. Then
$hyper_p\text{-}\cF$ is the family consisting of all subgroups $H$ in $G$ such that $O^p(H) \in \cF$. If $S$ is a $G$-set, then $hyper_p\text{-}S$ is the corresponding $G$-set to 
$hyper_p\text{-}\cF(S)$. This construction is due to Dress \cite[p. 307]{dress2}.

\end{definition}
It is easy to check that $hyper_p\text{-}\cF$ is closed under taking subgroups and conjugation, so we obtain a family of subgroups.
By construction, there is a $G$-map $X \to hyper_p\text{-}X$ for any $X$ and $hyper_p\text{-}hyper_p\text{-}X= hyper_p\text{-}X$.
One of Dress's main results is the following:
\begin{theorem}[{\cite[p.~207]{dress1}}]\label{alternative} Let $\cM$ be a Mackey functor.
For any prime $p$, let 
 $\cK(Y)=\ker ( \cM(\bullet) \otimes \bZ_{(p)} \to \cM(Y) \otimes \bZ_{(p)} )$ and
$\cI(Y)= \Image ( \cM(hyper_p\text{-}Y)\otimes \bZ_{(p)} \to \cM(\bullet)\otimes \bZ_{(p)}
)$, for any finite $G$-set $Y$. Then $\cM(\bullet) \otimes \bZ_{(p)} = \cK(Y) + \cI(Y)$.
\end{theorem}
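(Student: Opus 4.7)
The plan is to reduce the statement to a decomposition of the identity in the Burnside ring $\cA(\bullet)_{(p)}$, and then exhibit such a decomposition using $p$-local primitive idempotents. Let $\varphi\colon Y \to \bullet$ and $\psi\colon hyper_p\text{-}Y \to \bullet$ denote the constant $G$-maps. It suffices to produce $a,b \in \cA(\bullet)_{(p)}$ with $1 = a + b$, $a = \psi_\cA(a')$ for some $a' \in \cA(hyper_p\text{-}Y)_{(p)}$, and $\varphi^\cA(b) = 0$. Indeed, since $\cM$ is a Green module over $\cA$, the third pairing formula gives
\[
a \cdot x \;=\; \psi_\cA(a') \cdot x \;=\; \psi_\cM\bigl(a' \cdot \psi^\cM(x)\bigr) \;\in\; \cI(Y)
\]
for every $x \in \cM(\bullet)_{(p)}$, while the first pairing formula gives $\varphi^\cM(b \cdot x) = \varphi^\cA(b) \cdot \varphi^\cM(x) = 0$, so $b \cdot x \in \cK(Y)$; then $x = 1 \cdot x = a\cdot x + b \cdot x$ exhibits the required sum decomposition.

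To construct $a$ and $b$ I would invoke the classical $p$-local primitive idempotent decomposition of the Burnside ring (Dress, Gluck, Yoshida): one has $1 = \sum_{[L]} e_{[L]}$ in $\cA(\bullet)_{(p)}$, where $[L]$ ranges over $G$-conjugacy classes of \emph{$p$-perfect} subgroups $L \leq G$ (those with $O^p(L) = L$). These primitive idempotents are characterized by their marks: the mark of $e_{[L]}$ at $H \leq G$ is $1$ if $O^p(H)$ is $G$-conjugate to $L$ and $0$ otherwise; moreover each $e_{[L]}$ admits an explicit $\bZ_{(p)}$-linear expansion $e_{[L]} = \sum_K c_K [G/K]$ in which $K$ ranges over subgroups satisfying $O^p(K) = L$. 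Now partition the index set by the condition $L \in \cF(Y)$: let $a$ be the sum of those $e_{[L]}$ with $L \in \cF(Y)$, and $b$ the sum of the remaining idempotents.

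The verification is essentially bookkeeping with the families $\cF(Y)$ and $hyper_p\text{-}\cF(Y)$. For $a$: every summand $e_{[L]}$ with $L \in \cF(Y)$ is a $\bZ_{(p)}$-combination of $[G/K]$ with $O^p(K) = L \in \cF(Y)$, hence $K \in hyper_p\text{-}\cF(Y)$, so each such $[G/K]$ lies in $\Image\psi_\cA$; summing yields $a \in \Image\psi_\cA$. For $b$: since the $p$-local mark map on $\cA(Y)_{(p)}$ is injective, it suffices to check that the mark of $b$ at every $H \in \cF(Y)$ vanishes; but closure of $\cF(Y)$ under subgroups forces $O^p(H) \in \cF(Y)$, so the unique $[L]$ with $O^p(H) \sim_G L$ already lies in the index set defining $a$, and hence the mark of $b$ at $H$ is zero.

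The main obstacle is the structural input on $p$-local Burnside ring idempotents — in particular, the explicit $G$-set basis expansion of each $e_{[L]}$ on subgroups $K$ with $O^p(K) = L$, together with the mark characterization. Once these facts are cited from the theory, the remainder is a routine unwinding of the two families $\cF(Y)$ and $hyper_p\text{-}\cF(Y)$ combined with Frobenius reciprocity via the pairing formulas governing the $\cA$-module structure on $\cM$.
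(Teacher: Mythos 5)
The paper cites this result from Dress without supplying a proof, so your argument is evaluated on its own terms. Your reduction is correct: it suffices to exhibit $1 = a+b$ in $\cA(\bullet)\otimes\bZ_{(p)}$ with $a\in\Image(\psi_\cA\otimes\bZ_{(p)})$, where $\psi\colon hyper_p\text{-}Y\to\bullet$, and $\varphi^\cA(b)=0$, where $\varphi\colon Y\to\bullet$; the three pairing formulas then give $\cM(\bullet)\otimes\bZ_{(p)}=\cI(Y)+\cK(Y)$. The use of the primitive idempotents $e_{[L]}$ of $\cA(\bullet)\otimes\bZ_{(p)}$, indexed by conjugacy classes of $p$-perfect subgroups, and the verification that $b=\sum_{L\notin\cF(Y)}e_{[L]}$ has all marks zero on the subgroup-closed family $\cF(Y)$ (hence $\varphi^\cA(b)=0$), are also fine. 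But the structural input you rely on for $a$ --- that each $e_{[L]}$ admits a $\bZ_{(p)}$-linear expansion $e_{[L]}=\sum_K c_K[G/K]$ over subgroups $K$ with $O^p(K)=L$ --- is \emph{false}. Already for $G=C_2$, $p=3$, $L=C_2$, the only subgroup $K$ with $O^3(K)=C_2$ is $K=C_2$, so your claim would force $e_{[C_2]}$ to be a multiple of $[G/C_2]=1_\bullet$; but $e_{[C_2]}=-\tfrac12[G/1]+[G/C_2]$. Similarly $e_{[S_3]}$ at $p=3$ equals $\tfrac12[G/1]-[G/C_2]-\tfrac12[G/C_3]+[G/S_3]$, involving several $[G/K]$ with $O^3(K)\neq S_3$. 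In general the $[G/K]$-support of $e_{[L]}$ is the subgroup closure of $\{K:O^p(K)\sim L\}$, and individual coefficients need not be $p$-integral either.

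What is true, and all that you need, is that the \emph{sum} $a=\sum_{L\in\cF(Y)}e_{[L]}$ lands in $\Image(\psi_\cA\otimes\bZ_{(p)})$, and subgroup-closure of the family $hyper_p\text{-}\cF(Y)$ must be exploited across the whole sum. One way to close the gap: write $a=\sum e_K$ over the \emph{rational} primitive idempotents $e_K$ of $\cA(\bullet)\otimes\bQ$, indexed by $K\in hyper_p\text{-}\cF(Y)$ up to conjugacy. Gluck's formula expresses $e_K$ as a $\bQ$-combination of $[G/H]$ with $H\leq K$; by subgroup-closure all such $H$ lie in $hyper_p\text{-}\cF(Y)$, so $a$ lies in the $\bQ$-span of $\{[G/H]:H\in hyper_p\text{-}\cF(Y)\}$. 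Since $a$ also lies in $\cA(\bullet)\otimes\bZ_{(p)}$ (being a sum of $p$-local idempotents), and the basis $\{[G/H]\}$ is simultaneously a $\bZ_{(p)}$- and $\bQ$-basis, $a$ in fact lies in the $\bZ_{(p)}$-span of $\{[G/H]:H\in hyper_p\text{-}\cF(Y)\}$, which is exactly $\Image(\psi_\cA\otimes\bZ_{(p)})$. Alternatively, and closer to Dress's own route, for each $p$-perfect $L\in\cF(Y)$ choose $H$ with $L\leq H\leq N_G(L)$ and $H/L$ a Sylow $p$-subgroup of $N_G(L)/L$; then $O^p(H)=L$, so $[G/H]\in\Image\psi_\cA$, and the mark of $[G/H]$ at $H$ equals $[N_G(H):H]$, which is prime to $p$, so $[G/H]\,e_{[L]}$ is a unit in the local ring $\cA(\bullet)_{(p)}\cdot e_{[L]}$ and therefore $e_{[L]}\in\Image(\psi_\cA\otimes\bZ_{(p)})$.
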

If $Y$ is a finite $G$-set, 
we will use the notation $\langle Y\rangle$ for the equivalence class
of $Y$ in the  category $\DG$.
One useful consequence is:
\begin{lemma}
 Let $\cG_0$ be a sub-Green ring of $\cG_1$. For any prime $p$, and any finite $G$-set
$Y$ with $\langle Y \rangle = \langle hyper_p\text{-}Y\rangle$, 
the natural map $\cG_0(Y) \otimes \bZ_{(p)} \to \cG_0(\bullet) \otimes \bZ_{(p)}$ is surjective if and only if 
$\cG_1(Y) \otimes \bZ_{(p)} \to \cG_1(\bullet) \otimes \bZ_{(p)}$
is surjective.
\end{lemma}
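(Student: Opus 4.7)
The plan is to apply Dress's alternative Theorem \ref{alternative} to the sub-Green ring $\cG_0$ at the prime $p$, and compare the resulting decomposition with the $Y$-injectivity information available for $\cG_1$ through Proposition \ref{prop: Dress main thm}. By Proposition \ref{prop: Dress main thm}, surjectivity of $\cG_i(Y)\otimes\bZ_{(p)}\to\cG_i(\bullet)\otimes\bZ_{(p)}$ is equivalent to $1_\bullet^{\cG_i}$ lying in the image; the ring homomorphism $\cG_0\hookrightarrow\cG_1$ sends unit to unit and commutes with both the covariant and contravariant Mackey maps, and tensoring with the flat ring $\bZ_{(p)}$ preserves injections and the Green ring formalism. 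These observations frame the whole argument.

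The forward direction is immediate and does not use the hypothesis on $Y$: if $1_\bullet^{\cG_0}$ lies in the image of $\cG_0(Y)\otimes\bZ_{(p)}\to\cG_0(\bullet)\otimes\bZ_{(p)}$, then naturality of the covariant projection, together with $\iota(1_\bullet^{\cG_0})=1_\bullet^{\cG_1}$, puts $1_\bullet^{\cG_1}$ in the image of $\cG_1(Y)\otimes\bZ_{(p)}\to\cG_1(\bullet)\otimes\bZ_{(p)}$, so Proposition \ref{prop: Dress main thm} yields surjectivity for $\cG_1$.

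For the reverse direction, assume that $\cG_1(Y)\otimes\bZ_{(p)}\to\cG_1(\bullet)\otimes\bZ_{(p)}$ is surjective. I would apply Theorem \ref{alternative} to the Mackey functor $\cM:=\cG_0$: using the hypothesis $\langle Y\rangle=\langle hyper_p\text{-}Y\rangle$, this gives
$$\cG_0(\bullet)\otimes\bZ_{(p)}=\cK(Y)+\cI(Y),$$
where now $\cI(Y)=\Image\bigl(\cG_0(Y)\otimes\bZ_{(p)}\to\cG_0(\bullet)\otimes\bZ_{(p)}\bigr)$. Decompose $1_\bullet^{\cG_0}=k+i$ with $k\in\cK(Y)$, $i\in\cI(Y)$; it suffices to show $k=0$. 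By flatness, $\cG_0\hookrightarrow\cG_1$ remains an injection after $\otimes\bZ_{(p)}$, and by naturality of the contravariant projection, the image of $k$ in $\cG_1(Y)\otimes\bZ_{(p)}$ vanishes. However, the hypothesis on $\cG_1$ and Proposition \ref{prop: Dress main thm} (applied to the Green ring $\cG_1\otimes\bZ_{(p)}$) tell us that $\cG_1\otimes\bZ_{(p)}$ is $Y$-injective, so the contravariant map $\cG_1(\bullet)\otimes\bZ_{(p)}\to\cG_1(Y)\otimes\bZ_{(p)}$ is split injective. Therefore $k=0$ in $\cG_1(\bullet)\otimes\bZ_{(p)}$, hence also in $\cG_0(\bullet)\otimes\bZ_{(p)}$, so $1_\bullet^{\cG_0}=i\in\cI(Y)$ and Proposition \ref{prop: Dress main thm} again gives the claimed surjectivity for $\cG_0$.

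The main obstacle is the reverse direction, and the delicate point is bridging $\cG_0$ and $\cG_1$: Theorem \ref{alternative} supplies a decomposition only at the level of $\cG_0$, while $Y$-injectivity is given only for $\cG_1$. The hypothesis $\langle Y\rangle=\langle hyper_p\text{-}Y\rangle$ is precisely what is needed to rewrite the $\cI(Y)$ term of Dress's decomposition as the image of $\cG_0(Y)\otimes\bZ_{(p)}$ rather than of $\cG_0(hyper_p\text{-}Y)\otimes\bZ_{(p)}$, so that the injectivity transferred from $\cG_1$ lands on the correct object and kills the residual $\cK(Y)$-summand.
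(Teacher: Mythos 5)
Your proof is correct and follows essentially the same route as the paper's: the forward direction uses that the image is an ideal containing the unit, and the reverse direction uses $Y$-injectivity of $\cG_1$ to transfer injectivity of the restriction map to $\cG_0$ (via $\cG_0\hookrightarrow\cG_1$ and flatness of $\bZ_{(p)}$), then invokes Theorem \ref{alternative}. The only cosmetic difference is that the paper shows the restriction map $\cG_0(\bullet)\otimes\bZ_{(p)}\to\cG_0(Y)\otimes\bZ_{(p)}$ is injective outright (so $\cK(Y)=0$) before applying Dress's decomposition, whereas you apply the decomposition to $1_\bullet^{\cG_0}$ first and then kill the $\cK(Y)$-component by the same diagram chase.
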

\begin{proof}
For any Green ring $\cG$ and any finite $G$-set $Y$, 
the image of $\cG(Y) \otimes \bZ_{(p)}$ in $ \cG(\bullet) \otimes \bZ_{(p)}$ 
is an ideal.
Hence either map is onto if and only if $1_{\cG_i(\bullet)}$
is in the image. Since $1_{\cG_0(\bullet)}$ goes to
$1_{\cG_1(\bullet)}$, this proves the first implication.

For the converse, notice that the assumption 
$\cG_1(Y) \otimes \bZ_{(p)} \to \cG_1(\bullet) \otimes \bZ_{(p)}$
is surjective implies that the Amitsur complex is contractible for the restriction maps
induced by the transformation $Y \to \bullet$. In particular,
$\cG_1(\bullet) \otimes \bZ_{(p)} \to \cG_1(Y) \otimes \bZ_{(p)}$
is injective. Therefore $\cG_0(\bullet) \otimes \bZ_{(p)} \to \cG_0(Y) \otimes \bZ_{(p)}$ is injective, and from Dress's Theorem \ref{alternative} we conclude that
$\cG_0(hyper_p\text{-}Y) \otimes \bZ_{(p)} \to \cG_0(\bullet) \otimes \bZ_{(p)}$ is surjective. 
\end{proof}

Suppose that $\cG$ is a Green ring which acts on a Mackey
functor $\cM$. 
For many applications of induction theory, 
the ``best" Green ring for $\cM$ is the \emph{Burnside quotient Green
ring} ${\cA}_{{\cG}}$.
This  is a Green
ring which acts on ${\cM}$, and by construction 
${\cA}_{{\cG}}$  is a sub-Green ring of $\cG$. In particular,
the natural map ${\cA}_{{\cG}} \to \cG$ is an injection.
\begin{definition}\label{dress_generating}
A finite $G$-set $X$ is a \emph{Dress generating
set} for a Green ring $\cG$, provided that 
$\cG(hyper_p\text{-}X) \otimes \bZ_{(p)} \to \cG(\bullet) \otimes \bZ_{(p)}$ is surjective for each prime $p$. 
\end{definition}
 By Lemma \ref{alternative}, any finite $G$-set $X$ such that the natural map $\cG(\bullet) \to \cG(X)$ is \emph{injective} is a Dress generating set for $\cG$.
Notice that a Dress generating set for $\cG$ is also a Dress generating set for any quotient Green ring of $\cG$. The following result (Theorem A) is the main step in handling sub-Green rings.
\begin{theorem}\label{main_theorem}
A finite $G$-set $X$ is a Dress generating set for a Green ring
$\cG$ if and only if it is a Dress generating set for the Burnside
quotient Green ring $\cA_{\cG}$. 
\end{theorem}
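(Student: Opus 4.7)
The plan is to deduce Theorem A as an essentially immediate consequence of the preceding sub-Green ring lemma, once one observes that $\cA_\cG$ is a sub-Green ring of $\cG$ and that the $G$-set $hyper_p\text{-}X$ automatically satisfies the idempotency hypothesis needed to apply that lemma. No new ingredients should be required.

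First I would recall from the earlier injectivity lemma that the canonical homomorphism $\cA_\cG \to \cG$ is injective on each object $S \in \cD(G)$, so $\cA_\cG$ sits inside $\cG$ as a sub-Green ring in the sense required by the hypothesis of that lemma. Next, for each prime $p$, set $Y := hyper_p\text{-}X$. The hyperelementary operation is idempotent, $hyper_p\text{-}hyper_p\text{-}X = hyper_p\text{-}X$, as noted right after its definition. Hence $\la Y \ra = \la hyper_p\text{-}Y \ra$, and one may apply the sub-Green ring lemma with $\cG_0 = \cA_\cG$ and $\cG_1 = \cG$. This yields, for each fixed prime $p$, the equivalence that $\cA_\cG(Y) \otimes \bZ_{(p)} \to \cA_\cG(\bullet) \otimes \bZ_{(p)}$ is surjective if and only if $\cG(Y) \otimes \bZ_{(p)} \to \cG(\bullet) \otimes \bZ_{(p)}$ is surjective.

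By Definition \ref{dress_generating}, $X$ is a Dress generating set for $\cG$ (respectively for $\cA_\cG$) precisely when this surjectivity holds for every prime $p$. Combining the equivalence above over all primes then gives the stated equivalence.

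There is no substantive obstacle remaining: the real content has been absorbed into the preceding sub-Green ring lemma, whose own proof supplies the genuine argument (using Dress's alternative Theorem \ref{alternative} to turn injectivity of $\cG_1(\bullet) \otimes \bZ_{(p)} \to \cG_1(Y) \otimes \bZ_{(p)}$ into surjectivity at the hyperelementary level). The only nontrivial observation in the present proof is that one must invoke the sub-Green ring lemma with $Y = hyper_p\text{-}X$ rather than with $Y = X$ directly, so that the idempotency condition $\la Y\ra = \la hyper_p\text{-}Y\ra$ is automatic and no additional assumption on $X$ itself is needed.
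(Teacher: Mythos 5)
Your proof is correct and follows the same route as the paper: apply the preceding sub-Green ring lemma with $\cG_0 = \cA_\cG$, $\cG_1 = \cG$, and $Y = hyper_p\text{-}X$ (using idempotency of $hyper_p$ and injectivity of $\cA_\cG \to \cG$). The paper's own proof is a one-line version of exactly this argument.
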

\begin{proof}
We apply the result above to $Y = hyper_p\text{-}X$, for each prime $p$, and note that $\cA_\cG$ is a sub-Green ring of $\cG$.
\end{proof}
The Burnside quotient Green ring can be used to compute Mackey functors obtained by sub-quotients.
\begin{definition}
A finite $G$-set $X$ is a \emph{Dress generating
set} for a Mackey functor  $\cM$, provided that $X$ is a Dress generating set for the Burnside quotient Green ring $\cA_\cM$ of 
$\cM$.
\end{definition}
This is consistent with our previous Definition \ref{dress_generating} for a Green ring.
\begin{theorem}\label{subquotient_properties}
 Let $\cG$ be a Green ring and $\cM$, $\cN$ Mackey functors.
\begin{enumerate}
\item If $\cM$ is a $\cG$-module and $X$ is a Dress generating set for
$\cG$, then $X$ is a Dress generating set for $\cM$.
\item If $\cN$ is a sub-quotient of $\cM$ and $X$ is a Dress generating set for
$\cM$, then $X$ is a Dress generating set for $\cN$.
\item If $\cM$ is a full lattice in $\cN$ and $X$ is a Dress generating set for
$\cM$, then $X$ is a Dress generating set for $\cN$.
\end{enumerate}

\end{theorem}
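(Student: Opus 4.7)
The plan is to translate each of the three statements into an assertion about Burnside quotient Green rings and then use Theorem \ref{main_theorem} together with the transitivity lemmas already established in this section. The unifying principle is: a Dress generating set is preserved by quotient Green rings, and each hypothesis in (i)--(iii) is designed to produce a surjective Green ring map between the relevant $\cA_{(-)}$'s. So the problem reduces to identifying, in each case, which way the surjection between Burnside quotient Green rings goes.

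For part (i), I would argue as follows. By definition $X$ is a Dress generating set for $\cG$ means that $\cG(hyper_p\text{-}X)\otimes \bZ_{(p)} \to \cG(\bullet)\otimes \bZ_{(p)}$ is surjective for every prime $p$. Applying Theorem \ref{main_theorem}, this is equivalent to $X$ being a Dress generating set for $\cA_\cG$. Since $\cM$ is a Green module over $\cG$, the lemma about module ideals gives $I_\cG \subset I_\cM$, hence $\cA_\cM$ is a quotient of $\cA_\cG$. Dress generating sets pass to quotient Green rings, so $X$ is a Dress generating set for $\cA_\cM$, which is the definition of being a Dress generating set for $\cM$.

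For parts (ii) and (iii), I would use the two lemmas constructing the natural maps $\fs\colon \cA_\cN \to \cA_\cM$ (for sub-quotients, reducing by induction on chain length to the sub-functor and quotient-functor cases) and $\fg\colon \cA_\cM \to \cA_\cN$ (for full lattices) that were established earlier. In part (ii), having $\cN$ a sub-quotient of $\cM$ produces a surjection $\cA_\cM \to \cA_\cN$ of Green rings, so $\cA_\cN$ is a quotient of $\cA_\cM$. Since $X$ is Dress generating for $\cM$, i.e.\ for $\cA_\cM$, it remains Dress generating for the quotient $\cA_\cN$, hence for $\cN$. In part (iii), the full lattice hypothesis yields a surjection $\fg\colon \cA_\cM \to \cA_\cN$, so again $\cA_\cN$ is a quotient of $\cA_\cM$ and the same argument closes the proof.

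None of the three steps looks like a serious obstacle, because the hard work is concentrated in Theorem \ref{main_theorem} and in the four preceding lemmas that build the maps $\iota$, $\fs$ and $\fg$. The main thing to be careful about is the direction of the maps: it is crucial that each hypothesis (module, sub-quotient, full lattice) gives $\cA_\cN$ as a \emph{quotient} of $\cA_\cM$ (respectively $\cA_\cG$), not the other way around, since only quotients are guaranteed to inherit Dress generating sets. Once this bookkeeping is done, the proof is a short chain of implications.
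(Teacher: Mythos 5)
Your proof is correct and takes essentially the same approach as the paper's (which is much terser): in each case you reduce the claim to the observation that $\cA_\cN$ (resp.\ $\cA_\cM$) is a quotient of $\cA_\cM$ (resp.\ $\cA_\cG$), using the earlier lemmas on sub-quotients, full lattices, and Green modules, and then invoke Theorem~\ref{main_theorem} together with the fact that Dress generating sets pass to quotient Green rings.
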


\begin{proof}
Under the first assumption, $\cA_\cM$ is a subquotient of $\cG$. In the other parts, $\cA_\cN$ is a quotient of $\cA_\cM$.

\end{proof}
We can translate this into a computability statement as follows:
\begin{corollary} Let $p$ be a prime and $\cG$ be a Green ring. Suppose that 
$\cF$ is a $hyper_p$-closed family of subgroups of $G$.  Then $\cG\otimes \bZ_{(p)}$ is $\cF$-computable
if and only if $\cA_{\cG}\otimes \bZ_{(p)}$ is $\cF$-computable.
\end{corollary}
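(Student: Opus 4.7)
The plan is to reduce the statement directly to the single-prime lemma that sits inside the proof of Theorem \ref{main_theorem}. Set $X = X(\cF)$, the finite $G$-set associated to $\cF$. Because $\cF$ is $hyper_p$-closed, the isotropy subgroups of $hyper_p\text{-}X$ coincide with those of $X$, so $\langle hyper_p\text{-}X \rangle = \langle X \rangle$ as objects of $\cD(G)$.

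First I would note that $\bZ_{(p)}$-localization is exact and commutes with the bifunctor structure, pairings, and units, so $\cG \otimes \bZ_{(p)}$ is a Green ring with $\cA_{\cG} \otimes \bZ_{(p)}$ as a sub-Green ring, and the Amitsur complexes appearing in the definition of $\cF$-computability simply localize. Applying Proposition \ref{prop: Dress main thm} to the Green ring $\cG \otimes \bZ_{(p)}$, the $X$-injectivity of $\cG \otimes \bZ_{(p)}$ (i.e.\ $\cF$-computability) is equivalent to the surjectivity of
\[
\cG(X) \otimes \bZ_{(p)} \longrightarrow \cG(\bullet) \otimes \bZ_{(p)}.
\]
The identical reduction, applied to $\cA_{\cG} \otimes \bZ_{(p)}$, translates $\cF$-computability of $\cA_{\cG} \otimes \bZ_{(p)}$ into surjectivity of $\cA_{\cG}(X) \otimes \bZ_{(p)} \to \cA_{\cG}(\bullet) \otimes \bZ_{(p)}$.

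Next I would invoke the key lemma used in the proof of Theorem \ref{main_theorem}, applied to the pair of sub-Green rings $\cA_{\cG} \subset \cG$ and the $G$-set $Y = X$, which satisfies $\langle Y \rangle = \langle hyper_p\text{-}Y\rangle$ by the $hyper_p$-closedness of $\cF$. This lemma asserts precisely that the two surjectivity conditions displayed in the previous paragraph are equivalent, which yields the desired equivalence of $\cF$-computability statements.

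There is no real obstacle beyond bookkeeping: the only point deserving care is the verification that Proposition \ref{prop: Dress main thm} and the Amitsur complex description of $\cF$-computability pass through $\bZ_{(p)}$-localization, which is a routine exactness argument. Once that is in place, the corollary is an immediate single-prime specialization of Theorem A.
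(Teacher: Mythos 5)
Your proof is correct and follows essentially the route the paper intends (the paper gives no explicit proof for this corollary, presenting it as a translation of Theorem \ref{main_theorem}): you correctly reduce $\cF$-computability of $\cG\otimes\bZ_{(p)}$ and of $\cA_\cG\otimes\bZ_{(p)}$, via Proposition \ref{prop: Dress main thm}, to surjectivity onto $\cG(\bullet)\otimes\bZ_{(p)}$ and $\cA_\cG(\bullet)\otimes\bZ_{(p)}$ respectively, and then invoke the unnamed lemma preceding Theorem \ref{main_theorem} with $\cG_0 = \cA_\cG$, $\cG_1 = \cG$ and $Y = X(\cF)$, using $hyper_p$-closedness of $\cF$ to ensure $\langle Y\rangle = \langle hyper_p\text{-}Y\rangle$. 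This is exactly the single-prime version of the argument underlying Theorem A, and your remarks about localization commuting with the bifunctor structure and the Amitsur complex are accurate.
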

The advantage of ${\cA}_{\cG}$ over ${\cG}$
is that ${\cA}_{\cG}$ acts on Mackey functors
which are subfunctors or quotient functors of ${\cM}$
but ${\cG}$ does not in general.
For example, ${\cG}$ never acts on ${\cA}_{\cG}$
unless they are equal. 
We next point out another good  feature
 of the Burnside quotient Green ring.

\begin{theorem}[{\cite[Theorem~1.8]{h4}}]\label{dress-refinement}
 Suppose that $\cG$ is a Green ring which
acts on a Mackey functor $\cM$,  and 
$\cF$ is a $hyper_p$-closed family of subgroups of $G$. If $\cG\otimes \bZ_{(p)}$ is $\cF$-computable, then  every
$x\in \cM(G)\otimes \bZ_{(p)}$ can be written as
$$x = \sum_{H \in \cF} a_H \Ind_H^G(\Res_G^H(x))$$
for some coefficients $a_H \in  \bZ_{(p)}$, where the $a_H$ are the
same for all $x$.
\end{theorem}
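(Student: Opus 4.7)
The plan is to replace $\cG$ by the Burnside quotient Green ring $\cA_\cG$ and then lift further to the Burnside ring $\cA$ itself. This converts the generation statement --- ``$1$ is a sum of induced elements'' --- from one with coefficients in $\cG$ into a $\bZ_{(p)}$-linear combination of actual $G$-sets, whose integer coefficients supply the scalars $a_H \in \bZ_{(p)}$ in the conclusion.

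First, since $\cF$ is $hyper_p$-closed, $X(\cF) = hyper_p\text{-}X(\cF)$, so the hypothesis on $\cG\otimes \bZ_{(p)}$ tells us that $X(\cF)$ is a Dress generating set for $\cG$ at the prime $p$. Theorem~\ref{main_theorem} then shows that $X(\cF)$ is also a Dress generating set for $\cA_\cG$, so the map
$$\cA_\cG(X(\cF))\otimes \bZ_{(p)} \longrightarrow \cA_\cG(\bullet)\otimes \bZ_{(p)}$$
induced by $\varphi\colon X(\cF)\to \bullet$ is surjective. Pulling back the unit $1\in \cA_\cG(\bullet)\otimes \bZ_{(p)}$ and then lifting along the quotient map $\cA \twoheadrightarrow \cA_\cG$ produces an element $\hat u\in \cA(X(\cF))\otimes \bZ_{(p)}$ with $\varphi_\cA(\hat u) - 1_\bullet \in I_\cG(\bullet)\otimes \bZ_{(p)}$. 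Decomposing $\hat u$ into transitive constituents gives
$$\hat u = \sum_{H,K,f} n_{H,K,f}\,\bigl[G/K \xrightarrow{f} G/H\bigr],$$
where $H$ runs over the maximal subgroups in $\cF$ represented in $X(\cF)$, $K$ runs over subgroups of $H$ (hence over members of $\cF$, by subgroup-closure), $f$ runs over representative $G$-maps, and $n_{H,K,f} \in \bZ_{(p)}$.

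Since $\cM$ is a $\cG$-module, the ideal $I_\cG$ acts trivially on $\cM$: for $a\in I_\cG(\bullet)$ one has $a\cdot 1_\bullet = 0$ in $\cG(\bullet)$, hence $a\cdot x = 0$ in $\cM(\bullet)$ for all $x$. Therefore $\varphi_\cA(\hat u)\cdot x = 1_\bullet \cdot x = x$ for every $x\in \cM(\bullet)\otimes \bZ_{(p)}$. The Frobenius identity for the universal $\cA$-action on $\cM$ gives
$$x = \varphi_\cA(\hat u)\cdot x = \varphi_\cM\bigl(\hat u \cdot \varphi^\cM(x)\bigr),$$
and expanding $\hat u$ term by term --- the basis element $[G/K \xrightarrow{f} G/H]$ sends $\varphi^\cM(x)\in \cM(G/H)$ to $f_\cM(f^\cM\varphi^\cM(x))$, which is then pushed forward to $(\varphi\circ f)_\cM (\varphi\circ f)^\cM(x) = \Ind_K^G \Res_G^K(x)$, depending only on $K$ --- and collecting like terms yields the claimed formula with $a_K = \sum_{H,f} n_{H,K,f}\in \bZ_{(p)}$, manifestly independent of $x$.

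The main conceptual obstacle --- and the reason Theorem~A is needed --- lies in producing \emph{scalar} coefficients rather than elements of $\cG$. A naive application of Dress induction inside $\cG$ alone would yield only an expansion of the form $x = \sum_H \Ind_H^G(g_H \cdot \Res_G^H(x))$ with $g_H \in \cG(G/H)$; the refinement here is to pull the generation all the way back to $\cA$, whose elements decompose canonically as $\bZ$-combinations of concrete $G$-sets, while noting that the kernel of $\cA \twoheadrightarrow \cA_\cG$ acts as zero on $\cM$, so the lifting ambiguity disappears at the level of $\cM$. The elementary but crucial identity $I_\cG \cdot \cM = 0$ is what makes this passage through the Burnside ring work.
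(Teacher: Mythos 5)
Your proof is correct and takes essentially the same route as the paper: pass from $\cG$ to the Burnside quotient Green ring $\cA_\cG$ via Theorem~A, lift the generating element back to the Burnside ring $\cA$ itself, decompose into transitive $G$-sets, and apply the Frobenius identity, noting that the lifting ambiguity $I_\cG$ kills $\cM$. The only presentational difference is that you lift a single element $\hat u\in\cA(X(\cF))\otimes\bZ_{(p)}$ and expand it all at once, whereas the paper first writes $1 = \sum_K b_K\Ind_K^G(y_K)$ with $y_K\in\cA_\cG(K)\otimes\bZ_{(p)}$ and then lifts each $y_K$ to $\sum_H c_{KH}[K/H]\in\cA(K)\otimes\bZ_{(p)}$; after relabeling indices these are the same computation, and both yield the scalars $a_H$ as $\bZ_{(p)}$-sums of the Burnside-ring coefficients.
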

\begin{proof} 
Since $\cG\otimes \bZ_{(p)}$ is $\cF$-computable, we know that $\cA_{\cG}\otimes \bZ_{(p)}$ is
also $\cF$-computable. Therefore, we can write 
$1 = \sum_{K\in \cF} b_K \Ind_K^G(y_K)$, for some $y_K \in 
\cA_{\cG}(K)\otimes \bZ_{(p)}$ and $b_K\in\bZ_{(p)}$. For  any $x
\in \cM(G)\otimes \bZ_{(p)}$ we now have the formula
$$x = 1\cdot x = 
\sum_{K\in \cF} b_K \Ind_K^G(y_K\cdot \Res_G^K(x))$$
But each $y_K \in \cA_{\cG}(K)\otimes \bZ_{(p)}$ can be represented by 
a sum $\sum c_{KH}[K/H]$, with $c_{KH}\in  \bZ_{(p)}$, under the surjection $\cA(K) \to \cA_{\cG}(K)$. Therefore
$$
\begin{array}{ll}
x & = \sum_{K\in \cF} b_K
\sum_{H \subseteq K} c_{KH} \Ind_K^G([K/H]\cdot \Res_G^K(x))\\
&\\
& = \sum_{K\in \cF} b_K\sum_{H \subseteq K} c_{KH}
\Ind_K^G(\Ind_H^K(\Res_K^H( \Res_G^K(x))))\\
&\\
& =  \sum_{K\in \cF} b_K\sum_{H \subseteq K} c_{KH}
\Ind_H^G( \Res_G^H(x))
\end{array}
$$
We now define $a_H = \sum_{K \in \cF} b_K c_{KH}$,
and the formula becomes
$$x = \sum_{H\in \cF} a_H \Ind_H^G( \Res_G^H(x))\ .$$
\end{proof}
\begin{example}[Representation theory] 
Recall that a $p$-(hyper)elementary group is a (semi)direct product $C \rtimes P$, where $P$ is a $p$-group and $C$ is cyclic of order prime to $p$. 
A Dress generating set for a Green ring $\cG$ need not be a 
generating set for $\cG$. For example,  let $E$ denote the finite $G${-}set, $E = \coprod G/H$,
where we have one $H$ for each $p${-}elementary subgroup of $G$.
It is known that $E$ is a generating set for the complex representation ring $R_\bC(G)$, but not in general for  
the  rational representation ring $R_\bQ(G)$. 
On the other hand, complex representations are detected by characters, so any $G$-set with isotropy containing the cyclic family is a Dress generating set for 
$R_\bC(G)$, or for the sub-Green ring  $R_\bQ(G)$ by Theorem \ref{subquotient_properties} (ii).
It follows that 
 the hyperelementary family $\cH$ gives a generating set $X_{\cH}$  for  $R_\bQ(G)$. This implies the Brauer-Berman-Witt induction theorem for rational representations.
\end{example}
\begin{example}[The Swan ring]\label{swan_ring}
 The Swan ring is one of the main examples of Green rings in the classical setting of induction theory (see \cite{swan2}). For any finite group, let $SW(G, \bZ)$ denote the Grothendieck group of isomorphism classes of finitely-generated left $\bZ G$-modules, with $[L] = [L^\prime] + [L^{\prime\prime}]$ whenever there is a short exact sequence
$$0 \to L^\prime \to L \to L^{\prime\prime}\to 0$$
of such $\bZ G$-modules. The operation $L\otimes_{\bZ} L^\prime$ gives a ring structure on this Grothendieck group, so we obtain a commutative ring.  The usual induction and restriction operations for such modules give the Swan ring the structure of a Mackey functor. We let
$$SW_G\colon \cD(G) \to \Ab$$
denote the Green ring (in the sense of Dress) defined by 
$SW_G(G/H) := SW(H, \bZ)$, and extended to $\cD(G)$ by additivity.  
Since $SW(G,\bZ)$ is hyperelementary computable by Swan's induction theorem (see \cite[p.~211]{dress1}), we see that any Mackey functor on which this Green ring acts is hyperelementary computable. 

It follows that the Burnside quotient Green ring
of the Swan ring, denoted $\cA_{SW}$, also has the hyperelementary set $X_{\cH}$ as a Dress generating set (or more precisely, any $G$-set whose isotropy contains the cyclic family is a Dress generating set). In this case, $\cA_{SW}(G/H) \subset SW_G(G/H)$ is the subring  $P(H, \bZ) \subset SW(H,\bZ)$ generated by the permutation modules $\bZ[H/K]$, for all subgroups $K \subseteq H$.
\end{example}
\section{Computation techniques}\label{two}
Dress generating sets can also be used to compute exact sequences of Mackey functors or filtrations of Mackey functors by sub-functors. We say that 
$$\cM_0 \xrightarrow{a} \cM_1 \xrightarrow{b} \cM_2$$
is an \emph{exact sequence} of Mackey functors if $a$ and $b$ are 
homomorphisms of Mackey functors, such that the sequence $\cM_0(S) \to \cM_1(S) \to \cM_2(S)$ is exact for each finite $G$-set $S$. We define long exact sequences in a similar way.

\begin{proposition}\label{short_exact}
 Suppose that $\cM_0\to \cM_1\to
\cM_2$ is an exact sequence of Mackey functors. Then $X$ is a Dress generating set for $\cM_1$ whenever $X$ is a Dress generating set for $\cM_0$ and   $\cM_2$.
\end{proposition}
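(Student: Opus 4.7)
My plan is to prove the proposition via the key identity that $I_{\cM_0}(S)\cdot I_{\cM_2}(S)\subset I_{\cM_1}(S)$ for every finite $G$-set $S$, and then combine this with the fact that the image of $\cA(Y)\to\cA(\bullet)$ is an ideal (as already noted in the proof of the lemma following Theorem \ref{alternative}).

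For the key ideal inclusion, write the given exact sequence as $\cM_0\xrightarrow{\alpha}\cM_1\xrightarrow{\beta}\cM_2$. Take $a\in I_{\cM_0}(S)$, $b\in I_{\cM_2}(S)$, and consider an arbitrary $G$-map $\varphi\colon S\to T$ with $y\in \cM_1(T)$. By the pairing formulas, $\varphi_\cA(ab)\cdot y = \varphi_{\cM_1}\bigl((ab)\cdot \varphi^{\cM_1}(y)\bigr)$, so it suffices to check $(ab)\cdot y_1 = 0$ for every $y_1\in \cM_1(S)$. Compute $\beta\bigl(b\cdot y_1\bigr) = b\cdot \beta(y_1) = 0$, since $b\in I_{\cM_2}(S)$ annihilates $\cM_2(S)$ (apply the defining condition to the identity $\varphi=\mathrm{id}_S$). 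By exactness, $b\cdot y_1 = \alpha(w)$ for some $w\in \cM_0(S)$. Then $(ab)\cdot y_1 = a\cdot(b\cdot y_1) = \alpha(a\cdot w) = 0$, since $a\cdot w = 0$. The analogous check for restriction maps $\psi\colon U\to S$ uses $\psi^\cA(ab)=\psi^\cA(a)\cdot \psi^\cA(b)$, and runs identically, giving $ab\in I_{\cM_1}(S)$.

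Now for Dress generation, fix a prime $p$ and let $Y=hyper_p\text{-}X$. By hypothesis, there exist $\tilde a,\tilde b\in \cA(Y)\otimes \bZ_{(p)}$ with $\varphi_\cA(\tilde a)\equiv 1\pmod{I_{\cM_0}(\bullet)\otimes \bZ_{(p)}}$ and $\varphi_\cA(\tilde b)\equiv 1\pmod{I_{\cM_2}(\bullet)\otimes \bZ_{(p)}}$, where $\varphi\colon Y\to \bullet$ is the projection. Set $\alpha_0=\varphi_\cA(\tilde a)-1$ and $\beta_0=\varphi_\cA(\tilde b)-1$. Then $\alpha_0\beta_0\in I_{\cM_0}(\bullet)\cdot I_{\cM_2}(\bullet)\otimes\bZ_{(p)}$, which by the key identity lies in $I_{\cM_1}(\bullet)\otimes \bZ_{(p)}$. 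Expanding,
\[
1 = \varphi_\cA(\tilde a) + \varphi_\cA(\tilde b) - \varphi_\cA(\tilde a)\varphi_\cA(\tilde b) + \alpha_0\beta_0\, .
\]
Since the image of $\cA(Y)\otimes \bZ_{(p)}\to\cA(\bullet)\otimes \bZ_{(p)}$ is an ideal, the element $\varphi_\cA(\tilde a) + \varphi_\cA(\tilde b) - \varphi_\cA(\tilde a)\varphi_\cA(\tilde b)$ lies in that image. Reducing modulo $I_{\cM_1}(\bullet)\otimes \bZ_{(p)}$ shows $1\in \mathrm{Image}\bigl(\cA_{\cM_1}(Y)\otimes\bZ_{(p)}\to \cA_{\cM_1}(\bullet)\otimes \bZ_{(p)}\bigr)$, so $X$ is a Dress generating set for $\cM_1$.

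The main obstacle is the ideal inclusion step, which genuinely uses exactness of the middle term (not just that $\cM_1$ is a sub- or quotient functor of $\cM_0$ or $\cM_2$). Everything else is a formal manipulation once one observes that $(1-x)(1-y)\equiv 0$ modulo $I_{\cM_1}$ provides a partition-of-unity-style identity that keeps us inside the ideal of elements coming from $Y$.
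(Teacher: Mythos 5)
Your proof is correct and follows the same underlying strategy as the paper's, but organizes the argument in a more explicit, self-contained way. The paper first reduces to the case $\cM_0\hookrightarrow\cM_1\twoheadrightarrow\cM_2$, then considers the natural map $\theta\colon\cA_{\cM_1}\to\cA_{\cM_0}\oplus\cA_{\cM_2}$, asserts (without detailed verification) that $\ker\theta$ is a nilpotent ideal of index $2$, applies Theorem~\ref{subquotient_properties} to the quotient $\bar\cA_{\cM_1}=\cA_{\cM_1}/\ker\theta\hookrightarrow\cA_{\cM_0}\oplus\cA_{\cM_2}$, and finally lifts an element hitting $1_\bullet$ through the nilpotent ideal using the inverse $(1+u)^{-1}=1-u$. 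Your key inclusion $I_{\cM_0}(S)\cdot I_{\cM_2}(S)\subset I_{\cM_1}(S)$ is a slight strengthening of the paper's nilpotence claim (it gives $(I_{\cM_0}\cap I_{\cM_2})^2\subset I_{\cM_1}$ as a special case), and you verify it directly via Frobenius reciprocity and exactness at $\cM_1$ — precisely the content the paper leaves to the reader. You also avoid both the injective/surjective reduction and the Green ring machinery around $\theta$, replacing the final lift with the equivalent partition-of-unity identity $1 = \varphi_\cA(\tilde a)+\varphi_\cA(\tilde b)-\varphi_\cA(\tilde a)\varphi_\cA(\tilde b)+\alpha_0\beta_0$. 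Both endings are the same nilpotence trick in different clothing, so what your version buys is a somewhat more elementary and explicit argument that in particular spells out the one assertion the paper does not prove.
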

\begin{proof} We may assume that $\cM_0\to \cM_1$ is injective, and that  $\cM_1\to \cM_2$  
is surjective, and the projections from $\cA$ induce a natural transformation $\theta\colon \cA_{\cM_1} \to \cA_{\cM_0}\oplus \cA_{\cM_2}$ of Green rings. 
By exactness, $I_\theta(S):=\ker\theta_S$ is a nilpotent ideal (of nilpotence index 2). Let $\bar\cA_{\cM_1}= \cA_{\cM_1}/I_\theta$
denote the quotient Mackey functor. Since this Mackey functor is
mapped injectively by $\theta$ into $\cA_{\cM_0}\oplus \cA_{\cM_2}$,
we see that $X$ is a Dress generating set for $\bar\cA_{\cM_1}$.
It follows that $\bar\cA_{\cM_1}(hyper_p\text{-}X)\otimes \bZ_{(p)}\to \bar\cA_{\cM_1}(\bullet)\otimes \bZ_{(p)}$ is surjective  for every prime $p$.
But an element in $\cA_{\cM_1}(\bullet)\otimes \bZ_{(p)}$
 hitting $1_\bullet\in \bar\cA(\bullet)\otimes \bZ_{(p)}$ has the form $1 + u$, where
 $u \in I_\theta(\bullet)\otimes \bZ_{(p)}$. 
  Since
$u^2=0$, $1 + u$  is invertible and $(1+u)^{-1} = 1-u$. If $p_X\colon X \to \bullet$ denotes the projection map, and  $(p_X)_\ast(a) = 1+u$, then we have
$$(p_X)_\ast\bigl ((p_X)^\ast(1_\bullet-u)\cdot a\bigr ) = (1_\bullet - u)\cdot (p_X)_\ast(a) = 1_\bullet$$
and hence $X$ is a Dress generating set for $\cA_{\cM_1}$.
\end{proof}
\begin{remark}\label{lifting}  
In the proof of Proposition \ref{short_exact}, we have shown  for each prime $p$, there exists an element $a\in \cA(hyper_p\text{-}X)\otimes \bZ_{(p)}$ such that $a\mapsto 1_\bullet$ in each of the Burnside quotient Green rings
$\cA_{\cM_i}(\bullet)\otimes \bZ_{(p)}$, for $i = 0, 1, 2$. 
The same argument extends by induction to finite filtrations of a Mackey functor by sub-Mackey functors.
\end{remark}
\begin{corollary}\label{finite_filtration}
Let $0 = \cN_0\subset \dots \subset \cN_r = \cM$ be a filtration of a Mackey functor by  sub-Mackey functors. Then $X$ is a Dress generating set for $\cM$ if and only if $X$ is a Dress generating set for each quotient 
$\cN_i/\cN_{i-1}$, for $1\leq i \leq r$. 
\end{corollary}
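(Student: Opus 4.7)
The plan is to deduce the corollary from Proposition \ref{short_exact} together with Theorem \ref{subquotient_properties}(ii), by induction on the length $r$ of the filtration.

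For the forward implication, suppose $X$ is a Dress generating set for $\cM$. Each quotient $\cN_i/\cN_{i-1}$ is a sub-quotient of $\cM$ in the sense already defined (first restrict to the sub-Mackey functor $\cN_i$, then take the quotient by $\cN_{i-1}$). Therefore, by Theorem \ref{subquotient_properties}(ii), $X$ is a Dress generating set for each $\cN_i/\cN_{i-1}$, for $1 \leq i \leq r$.

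For the converse, I would argue by induction on $r$. The case $r=1$ is immediate since $\cM = \cN_1/\cN_0$. For the inductive step, assume the result holds for filtrations of length $r-1$, and suppose $X$ is a Dress generating set for each $\cN_i/\cN_{i-1}$. The filtration $0 = \cN_0 \subset \dots \subset \cN_{r-1}$ then satisfies the inductive hypothesis, so $X$ is a Dress generating set for $\cN_{r-1}$. Now consider the short exact sequence of Mackey functors
$$0 \to \cN_{r-1} \to \cM \to \cM/\cN_{r-1} \to 0\, ,$$
where $\cM/\cN_{r-1} = \cN_r/\cN_{r-1}$ is, by hypothesis, Dress-generated by $X$. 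Applying Proposition \ref{short_exact} to this exact sequence gives that $X$ is a Dress generating set for $\cM$, completing the induction.

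Since both directions rely only on previously established results, there is no real obstacle here; the only subtle point, already addressed by Remark \ref{lifting}, is that the nilpotent-extension argument in the proof of Proposition \ref{short_exact} iterates cleanly along a finite filtration, so the inductive step goes through without any additional bookkeeping.
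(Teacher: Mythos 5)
Your proof is correct and matches the route the paper intends: the paper gives no explicit argument for the corollary, only the preceding Remark \ref{lifting} observing that the nilpotent-extension argument of Proposition \ref{short_exact} "extends by induction to finite filtrations," and your inductive application of Proposition \ref{short_exact} together with Theorem \ref{subquotient_properties}(ii) for the forward direction is exactly the straightforward way to fill that in. One small note: your final paragraph is unnecessary for the argument as you have given it, since you invoke Proposition \ref{short_exact} as a black box at each step of the induction (applying it to one short exact sequence at a time), so you never need the nilpotent ideal to iterate across the whole filtration simultaneously — that stronger statement is what Remark \ref{lifting} records, and it is what you would need if you wanted a single element $a \in \cA(hyper_p\text{-}X)\otimes \bZ_{(p)}$ hitting $1_\bullet$ in every $\cA_{\cN_i/\cN_{i-1}}(\bullet)\otimes\bZ_{(p)}$ at once, but it is not required for the corollary as stated.
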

A finite length \emph{chain complex} of Mackey functors is a sequence $(\cN_i, \bd_i)$ of Mackey functors $\cN_i$, $0 \leq i \leq r$, and natural transformations $\bd_i\colon \cN_i \to \cN_{i-1}$, $1\leq i \leq r$, such that 
$(\cN(S), \bd_\ast)$ is a  chain complexes of abelian groups for each finite $G$-set $S$.
 A chain complex $\cN$ of
Mackey functors has homology groups $H_i(\cN)$, $0\leq i \leq r$, which are subquotient Mackey functors of $\cN_i$.
\begin{corollary}   Suppose that $\cN$ is a finite length chain complex of Mackey functors.  If $X$ is a Dress generating set for each $\cN_i$, $0\leq i \leq r$,  then $X$ is a Dress generating set for each of the homology Mackey functors $H_i(\cN)$, $0\leq i \leq r$.
\end{corollary}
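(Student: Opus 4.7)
The plan is to reduce the statement directly to Theorem \ref{subquotient_properties}(ii) by recognizing each homology Mackey functor $H_i(\cN)$ as a sub-quotient of the Mackey functor $\cN_i$ appearing in degree $i$. Once this identification is in place, the hypothesis that $X$ is a Dress generating set for $\cN_i$ transfers immediately to $H_i(\cN)$, giving the corollary without any new calculation.

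To carry this out, first I would recall from the discussion following the definition of a Mackey functor homomorphism that the kernel and image of any homomorphism of Mackey functors are themselves Mackey functors, and are sub-Mackey functors of the source and target respectively. Applying this to the natural transformations $\bd_i\colon \cN_i \to \cN_{i-1}$ and $\bd_{i+1}\colon \cN_{i+1} \to \cN_i$ gives sub-Mackey functors $\ker \bd_i \subset \cN_i$ and $\Image \bd_{i+1} \subset \cN_i$. The chain complex relation $\bd_i\circ \bd_{i+1} = 0$ forces $\Image \bd_{i+1} \subset \ker \bd_i$ at every finite $G$-set $S$, so the quotient Mackey functor
\[
H_i(\cN) \;=\; \ker \bd_i \,/\, \Image \bd_{i+1}
\]
is well-defined, and the chain $H_i(\cN)$, $\ker\bd_i$, $\cN_i$ exhibits $H_i(\cN)$ as a sub-quotient of $\cN_i$ in the precise sense of the sub-quotient definition given in Section \ref{one}: a quotient functor of a sub-functor.

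With this structural observation, the proof concludes by invoking Theorem \ref{subquotient_properties}(ii): since $X$ is a Dress generating set for $\cN_i$, it is a Dress generating set for every sub-quotient of $\cN_i$, and in particular for $H_i(\cN)$. I do not foresee any genuine obstacle here beyond checking that the kernel-image bookkeeping is compatible with the bifunctor structure (pullbacks, disjoint union additivity), which is routine and already folded into the remark after Definition 1.1 that kernels, images and cokernels of Mackey functor homomorphisms are themselves Mackey functors. Note in passing that the hypothesis on the other $\cN_j$ for $j \ne i$ is not actually used for the $i$-th homology; the stronger uniform hypothesis in the statement is a convenience, not a necessity.
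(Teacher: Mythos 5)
Your proof is correct and follows essentially the same route the paper intends: the sentence immediately preceding the corollary observes that each $H_i(\cN)$ is a subquotient Mackey functor of $\cN_i$, after which the corollary is an immediate application of Theorem \ref{subquotient_properties}(ii), exactly as you spell out. Your parenthetical observation that the hypothesis on $\cN_j$ for $j\neq i$ is superfluous for the $i$-th homology is also accurate.
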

Another useful construction is ``completion".  
\begin{theorem}\label{filtration_quotients}  
Let $\cM$ be a Mackey functor, and let $\cF$ denote  a (possibly infinite) filtration
$$\cM = F_0 \supseteq F_1 \supseteq \dots \supseteq F_r \supseteq \dots$$
of $\cM$ by sub-Mackey functors. 
 A finite $G$-set $X$ is a Dress generating set for
$\widehat \cM_\cF= \invlim \cM/F_r$ if and only if $X$ is a Dress generating set for
each quotient Mackey functor $F_{r-1}/F_r$, $r\geq 1$.
\end{theorem}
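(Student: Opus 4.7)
The plan is to treat the two implications separately. A preliminary observation used throughout is that because the inverse system $\{\cM/F_r\}$ has surjective transition maps, each natural projection $\widehat\cM_\cF \to \cM/F_r$ is surjective.

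For the forward direction, each $F_{r-1}/F_r$ embeds as a sub-Mackey functor of $\cM/F_r$, which itself is a quotient of $\widehat\cM_\cF$. Hence $F_{r-1}/F_r$ is a sub-quotient of $\widehat\cM_\cF$, and Theorem \ref{subquotient_properties}(ii) then immediately gives that $X$ is a Dress generating set for $F_{r-1}/F_r$.

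For the backward direction, assume $X$ is a Dress generating set for each $F_{r-1}/F_r$. Applying Corollary \ref{finite_filtration} to the finite filtration $0 \subset F_{r-1}/F_r \subset \dots \subset \cM/F_r$ shows that $X$ is a Dress generating set for every $\cM/F_r$. The next step is to identify $I_{\widehat\cM_\cF}(S) = \bigcap_r I_{\cM/F_r}(S)$ for each finite $G$-set $S$. The inclusion $\subseteq$ is immediate since each $\cM/F_r$ is a quotient of $\widehat\cM_\cF$. The reverse inclusion uses the surjectivity of $\widehat\cM_\cF(T)\to \cM/F_r(T)$ to lift any $y_r \in \cM/F_r(T)$ to a compatible $y \in \widehat\cM_\cF(T)$, so that trivial action at every finite level entails trivial action on the limit. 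In particular $\cA_{\widehat\cM_\cF}(\bullet) = \cA(\bullet)/\bigcap_r I_{\cM/F_r}(\bullet)$.

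The problem thereby reduces to this: for each prime $p$, write $R := \cA(\bullet)\otimes \bZ_{(p)}$, let $J \subseteq R$ be the ideal obtained as the image of $(p_X)_*\colon \cA(hyper_p\text{-}X)\otimes \bZ_{(p)} \to R$, and set $I_r := I_{\cM/F_r}(\bullet)\otimes \bZ_{(p)}$. We know $J + I_r = R$ for each $r$ and must deduce $J + \bigcap_r I_r = R$. This is the main obstacle, because $R$ is a Noetherian but non-Artinian $\bZ_{(p)}$-module, so the decreasing chain $\{I_r\}$ need not stabilize; correspondingly, the passage from $J + I_r = R$ to $J + \bigcap_r I_r = R$ is false for general submodules. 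The crucial extra ingredients are that $J$ and the $I_r$ are ideals and that $R$ is a ring. The approach I would follow is to pick $a_r \in \cA(hyper_p\text{-}X)\otimes \bZ_{(p)}$ with $(p_X)_*(a_r) = 1_\bullet + u_r$, $u_r \in I_r$, and then use the nilpotence-and-Neumann-inverse trick of Proposition \ref{short_exact} (via $(1+u_r)^{-1}$ modulo successively smaller ideals) to iteratively correct $a_r$ into a compatible sequence of representatives of $1_\bullet$ in $\cA_{\cM/F_r}(\bullet)\otimes \bZ_{(p)}$. Combined with the finiteness of $R$ over $\bZ_{(p)}$ and the completeness property $\bigcap_r \ker(\widehat\cM_\cF\to \cM/F_r) = 0$ of the induced filtration on $\widehat\cM_\cF$, this should yield a single $a\in \cA(hyper_p\text{-}X)\otimes \bZ_{(p)}$ whose image in $\cA_{\widehat\cM_\cF}(\bullet)\otimes \bZ_{(p)}$ equals $1_\bullet$. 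Making this final descent argument precise is the main technical work.
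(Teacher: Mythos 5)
Your forward direction matches the paper's: each $F_{r-1}/F_r$ is a sub-quotient of $\widehat\cM_\cF$, so Theorem \ref{subquotient_properties}(ii) applies. Your backward direction also opens correctly: Corollary \ref{finite_filtration} gives that $X$ is Dress generating for each $\cM/F_r$, and your identification $I_{\widehat\cM_\cF}(S)=\bigcap_r I_{\cM/F_r}(S)$ is right (though you have the two inclusions' justifications swapped: it is $I_{\widehat\cM_\cF}\subseteq I_{\cM/F_r}$ that needs the surjectivity of $\widehat\cM_\cF(T)\to(\cM/F_r)(T)$ to lift a test element $y_r$, while $\bigcap_r I_{\cM/F_r}\subseteq I_{\widehat\cM_\cF}$ is immediate by checking componentwise). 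Thus your reduction to $J+\bigcap_r I_r = R$ is a faithful reformulation of the claim.

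The gap is in how you propose to prove that equality. You invoke ``the nilpotence-and-Neumann-inverse trick of Proposition \ref{short_exact},'' but that trick uses $u^2=0$, and nothing forces $u_r\in I_r$ to be nilpotent; $1+u_r$ need not be invertible in $R$, nor in $R/I_{r'}$ for $r'>r$. More fundamentally, the bare ring-theoretic assertion you reduce to is false for general decreasing chains of ideals: in $R=\bZ_{(5)}[i]$, take $J=(2+i)$ and $I_r=((2-i)^r)$; then $J+I_r=R$ for every $r$ (the ideals are comaximal) while $\bigcap_r I_r=0$ by Krull intersection, so $J+\bigcap_r I_r = J\neq R$. ``Finiteness of $R$ over $\bZ_{(p)}$'' does not rescue this, because $\bZ_{(p)}$ is not complete and your iterative corrections produce only a Cauchy sequence whose limit lives in the $\{I_r\}$-completion of $R$, not in $R$ itself; the ``completeness'' you cite is a property of $\widehat\cM_\cF$, not of $R$. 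What is actually needed, and what the paper does, is to use the Green-ring structure rather than pure ideal theory: given $a_r\in\cA_{\cM/F_r}(Y)$ hitting $1_\bullet$, the discrepancy with the image of $a_{r+1}$ lies in $\ker\bigl(\cA_{\cM/F_r}(Y)\to\cA_{\cM/F_r}(\bullet)\bigr)$ and is therefore a $\bd_2$-boundary by contractibility of the $Y$-Amitsur complex; one lifts that boundary to $\cA_{\cM/F_{r+1}}(Y\times Y)$ and subtracts. This builds a coherent element of $\cG(Y)=\invlim\cA_{\cM/F_r}(Y)$, showing the Green ring $\cG=\invlim\cA_{\cM/F_r}$ has $X$ as a Dress generating set; since $\cG$ acts on $\widehat\cM_\cF$ and $\cA_\cG\twoheadrightarrow\cA_{\widehat\cM_\cF}$, Theorem \ref{subquotient_properties} finishes the argument. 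The Amitsur-complex contractibility (equivalently, Dress's Proposition 1.2) is the missing ingredient in your proposal, and it cannot be replaced by the nilpotence mechanism from Proposition \ref{short_exact}.
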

\begin{proof}   Since each $F_{r-1}/F_r$ is a sub-quotient of $\widehat \cM_\cF$, the necessity follows from the results above. For sufficiency, 
we first note by Corollary \ref{finite_filtration} that $X$ is Dress generating set for each quotient $\cM/F_r$. It is enough to prove that $X$ generates the inverse limit
$\invlim \cA_{\cM/F_r}$ of the Burnside quotient Green rings for the sequence $\{\cM/F_r\}$.
Suppose that $X$ is a Dress generating set for each 
$\cA_{\cM/F_r}$, $r\geq 1$, and set $Y =hyper_p\text{-}X$. If $\{a_r\}$ is a sequence of elements in
$\cA_{\cM/F_r}(Y)$ hitting $1^r_\bullet$, we can use the contractibility of the $Y$-Amitsur complex for  $\cA_{\cM/F_{r+1}}$  inductively, to adjust each $a_{r+1}$ by an element of $\cA_{\cM/F_{r+1}}(Y \times Y)$, so that $a_{r+1} \mapsto a_r$. This gives us an element in the inverse limit $\invlim\cA_{\cM/F_r}(Y)$ hitting $1_\bullet\in
\invlim\cA_{\cM/F_r}(\bullet):=\cG(\bullet)$, and hence the Green ring $\cG$ acts on 
$\widehat\cM_\cF$ with $X$ as a Dress generating set. Since $\cA_\cG \twoheadrightarrow \cA_{\widehat\cM_\cF}$ is surjective, it follows that  $\widehat\cM_\cF$ has $X$ as a Dress generating set. 
\end{proof}
\begin{example}   Here is an important special case. Let $\cG$ be a Green ring acting on a Mackey functor $\cM$. If $\cI\subset \cG$ is a Green ideal, we may filter $\cM$ by the sub-bifunctors $F_r=\cI^r\cM$ and then  $\widehat\cM_\cF$ is the $\cI$-adic completion of $\cM$.

In particular, for a given Mackey functor $\cM$ we could take
$\cI = \langle \cI_\cM, 2\rangle$, and then $\widehat \cM_\cI$ is just the $2$-adic completion of the Mackey functor $\cM$.  Note that if $\cM(X)$ is  finitely generated, then $\widehat \cM_\cI(X) \cong \cM(X)\otimes \Zhat_2$.  
\end{example}

\section{Mackey functors and $RG$-Morita  }\label{three}
In order to prove Theorem B, we need to define the bifunctor $d\colon \cD(G) \to \RGMorita$ used in its statement. This involves some definitions and elementary properties of categories with bisets as morphisms, which are  well-known to the experts. We include this material for the reader's convenience.

In \cite[1.A.4]{htw3} we introduced the category $RG$-Morita whose basic objects are finite groups $H$ isomorphic to some subquotient of $G$, and whose morphisms were defined by a Grothendieck group construction on the isomorphism classes of finite $H_2$-$H_1$ bisets $X$, for which the order of the left stabilizer
$$ _{H_2}I(x) = \{ h \in H_2\vv hx=x\}$$
is a unit in $R$, for all $x \in X$. 
Here $R$ is a commutative ring with unit. 
We set $X \sim X^\prime$ 
if $RX$ is isomorphic to $RX^\prime$ as $RH_2$-$RH_1$ bimodules.
The balanced product $ X\times_{H_2} Y$ of an 
$H_3$-$H_2$ biset $X$ and an $H_2$-$H_1$ biset $Y$ is a  $H_3$-$H_1$ biset. This defines the composition for morphisms. The Add-construction \cite[p.~194]{maclane2} is then applied to complete the definition.
Many functors arising in algebraic $K$-theory and topology are actually functors out of $RG$-Morita, so it is of interest to recognize when these are Mackey functors.

To relate Mackey functors and $\RGMorita$, will need the 
$G$-Burnside category, $\Burnside(G)$,  whose
objects are subgroups $H\subset G$, and where
$\Hom_{\Burnside(G)}(H_1, H_2)$ is the Grothendieck construction applied to
the isomorphism classes of finite bifree $H_2$\,-$H_1$ bisets
(meaning both left and right actions are free). 
Because of the Grothendieck group construction, $\Burnside(G)$ is an  Ab--category: the morphism sets are
abelian groups and the compositions are bilinear  \cite[I.8, p.~28]{maclane2}.
Let $$u\colon \bA(G) \to \bAdot(G)$$  denote the associated universal free additive category, and  the universal inclusion (see  \cite[VII.2, problem 6, p.~194]{maclane2}). 

The morphisms in $\Burnside(G)$ are defined by  the Grothendieck group construction with
addition operation the disjoint union of bisets. By convention, the empty biset  $\emptyset$ represents the zero element.
Composition comes from the balanced product:
$${_{H_3}}X_{H_2} \circ {_{H_2}}X_{H_1} = ({_{H_3}}X_{H_2})\times_{H_2} ({_{H_2}}X_{H_1}).$$
The reader should check that this is well--defined on isomorphism classes of bisets
and ``bilinear'' in that
$$({_{H_3}}X_{H_2} \disjointunion {_{H_3}}Y_{H_2}) \circ {_{H_2}}X_{H_1}\cong ({_{H_3}}X_{H_2} \circ{_{H_2}}X_{H_1})\disjointunion\,
({_{H_3}}Y_{H_2}\circ {_{H_2}}X_{H_1})$$ with a similar formula for disjoint union on the right. The morphisms in $\bAdot(G)$ are matrices of morphisms in $\Burnside(G)$.

\begin{definition}
We define a contravariant involution $\congtran\colon \Burnside(G) \to \Burnside(G)$,  by the identity on objects,
and on morphisms it is the map induced on the Grothendieck construction by the function
which takes the finite bifree $H_2$\,-$H_1$ biset ${_{H_2}}X_{H_1}$ to the  finite bifree
$H_1$\,-$H_2$ biset ${_{H_1}}X_{H_2}$ which is $X$ as a set and
$h_1\cdot x \cdot h_2$ is defined to be $h_2^{-1}x h_1^{-1}$.
\end{definition}

The reader needs to check that isomorphic bisets are isomorphic after reversing the order,
and should also check that the transpose conjugate of a disjoint union is isomorphic to the
disjoint union of the conjugate transposes of the pieces.
This means that $\congtran$ is a functor which induces a homomorphism of Hom--sets.
It is clearly an involution, not just up to natural equivalence.
Since $\congtran$ is a homomorphism on Hom-sets, it induces an additive contravariant
involution $\congtrandot\colon \bAdot(G) \to \bAdot(G)$, called \emph{conjugate transpose}, which commutes with the functor $u$.  By definition,  $\congtrandot$ acts on a matrix of morphisms by applying $\congtran$ to each entry, and then transposing the matrix.
There is a functor $$a\colon \bAdot(G) \to \RGMorita$$ given by the inclusion on objects and morphisms (but the equivalence relation on morphisms is different in $\RGMorita$).

There is a functor  $\bAdot(G) \to \RMorita$, called the $R$-\emph{group ring functor}, where $R$-Morita has objects $R$-algebras and morphisms defined by stable isomorphism classes of bimodules (see \cite[1.A.1]{htw3}). This functor factors through $\RGMorita$: it sends $H \mapsto RH$ on objects, and $X \mapsto RX$ on morphisms. 

We will define the following diagram of categories and functors:
\eqncount
\begin{equation}\label{main_diagram}
\vcenter{\xymatrix@C+4pt{&\bAdot(G) \ar[d]^a\ar[dr]^{\ \ R\text{-group\,ring}}&\cr
\cD(G) \ar[r]^(0.4)d\ar[ur]^(0.33)j & \RGMorita\ar[r]& \RMorita}}\end{equation}

To complete the definition of the functors in this diagram, we need to introduce another category.
Let $\cDdot(G)$ denote the category whose objects are pairs $(X, \mathbf b)$,
consisting of a finite $G$-space $X$ and an ordered collection $\mathbf b = (b_1, \cdots, b_n)$ of base-points,
one for each $G$-orbit of $X$.
The morphisms are the $G$-maps (not necessarily base-point preserving).
There is a functor
$$\DdotD\colon \cDdot(G) \to \cD(G)$$
defined by forgetting the base-points.
Since every object of $\cD(G)$ is isomorphic to the image $\DdotD(X,\mathbf b)$ of an object of
$\cDdot(G)$, and $\DdotD$ induces a bijection on morphism sets, it follows that
$\DdotD$ gives an equivalence between the categories
$\cDdot(G)$ and $\cD(G)$, with inverse functor $\DtoDdot$
\cite[IV.4, Theorem 1, p.~91]{maclane2}.

We can now define two functors 
$$(\downj, \upj)\colon \cDdot(G) \to \bAdot(G).$$
The covariant functor $\downj$ is the additive extension of the functor which sends an object $(G/H, eH)$ to the  isotropy subgroup $H$, and sends the $G$ map $f\colon G/H \to G/K$
to the biset ${_{K}}K_{{g^{-1} H g}}$, where $f(e H) = g K$. If we change the coset representative and write $f(eH) = g_1K$, then the map
\eqncount
\begin{equation}\label{bisetbijection}
\psi\colon {_{K}}K_{{g^{-1} H g}} \rightarrow {_{K}}K_{{g_1^{-1} H g_1}}
\end{equation}
defined by $\psi(k) = k(g^{-1}g_1)$ gives an bijection of $K$-$K$ bisets.
Note that $1_{G/H}\colon G/H \to G/H$ goes to ${_{H}}H{_{H}}$ which is the identity.
Check that if
$f_1\colon G/H_1\to G/H_2$ and $f_2\colon G/H_2 \to G/H_3$ are $G$-maps and if
$f_1(e H_1) = g_1 H_2$ and $f_2(e H_2) = g_2 H_3$
then $f_2\circ f_1(e H_1) = (g_1 g_2) H_3$ and
$${_{H_3}}\bigl(H_3\bigr){_{g^{-1}_2 H_2 g_2}} \times_{H_2}
{_{H_2}}\bigl(H_2\bigr){_{g^{-1}_1 H_1 g_1}}$$
is isomorphic to ${_{H_3}}\bigl(H_3\bigr){_{(g_1 g_2)^{-1} H_1(g_1 g_2)}}$ by the map
$(h_3, h_2) \mapsto h_3 g_2^{-1} h_2 g_2$.

The contravariant functor $\upj$ agrees with $\downj$ on objects, but sends the $G$ map $f\colon G/H \to G/K$
to the  biset ${_{g^{-1} H g}}K_{K}$ where $f(e H) = g K$. 
Rather than check identity and composition directly, just note that
${_{g^{-1} H g}}K_{K}$ is isomorphic to $\congtran\bigl({_{K}}K_{{g^{-1} H g}}\bigr)$ by the
function which sends $k$ to $k^{-1}$,
so $\upj = \congtran \circ \downj$ and hence $\upj$ is a contravariant functor.

\begin{definition}\label{DtoBurnside}
We define the bifunctor
$$\DtoAdot\colon \cD(G)\to \bAdot(G)$$
as the composition $\DtoAdot = (\upj, \downj)\circ \DtoDdot$.  Let
$$d= a\circ j\colon \cD(G) \to \RGMorita$$
denote the composition in diagram (\ref{main_diagram}).
\qed
\end{definition}

For any additive functor $F\colon \bAdot(G) \to \Ab$, the composition  $F\circ j\colon \cD(G) \to \Ab$ is a Mackey functor (see \cite{htw2008}). Our main application is the following:
\begin{theorem}\label{rgmorita}
 Any additive functor $F\colon \RGMorita\to \Ab$ gives a Mackey functor on $\cD(G)$ by composition with $d\colon \cD(G) \to \RGMorita$. Any such Mackey functor is hyperelementary computable.
\end{theorem}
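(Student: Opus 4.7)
The statement asserts two things and I would handle them separately. First, that $\cM := F \circ d$ is a Mackey functor: using the factorization $d = a \circ j$ from diagram (\ref{main_diagram}), the additivity of $a \colon \bAdot(G) \to \RGMorita$ and of $F$ makes $F \circ a$ an additive functor $\bAdot(G) \to \Ab$, so the composition $(F \circ a) \circ j = \cM$ is a Mackey functor by the result from \cite{htw2008} invoked just before the theorem.

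For the hyperelementary computability, my plan is to exhibit $\cM$ as a Green module over the Swan ring $SW_G$ of Example \ref{swan_ring} and then apply Theorem \ref{subquotient_properties}(i). For each subgroup $H \subseteq G$, a finitely generated $\bZ H$-module represents a morphism in $\RGMorita$ through the bimodule-from-biset construction underlying the $R$-group-ring functor of diagram (\ref{main_diagram}); applying $F$ to such a morphism produces a bilinear pairing $SW(H, \bZ) \times \cM(G/H) \to \cM(G/H)$. I would then verify that these pairings are compatible with the induction and restriction maps determined by $d$, so that they assemble into a pairing of Mackey functors $SW_G \times \cM \to \cM$. Granting this module structure, the conclusion is immediate from Theorem \ref{subquotient_properties}(i) together with Swan's induction theorem: the latter gives the hyperelementary set $X_\cH$ as a Dress generating set for $SW_G$, and the former transfers this Dress generating property to $\cM$.

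The main obstacle is the construction in the middle paragraph, namely producing and verifying the Swan-ring action on $\cM$ intrinsically in terms of the category $\RGMorita$. For specific $F$ such as Quillen $K$-theory, Hochschild homology, and cyclic homology (see \cite[1.A.12]{htw3}) this action is classical and is what drives Oliver's proof of \cite[11.2]{oliver3}. The crucial extra input here is Theorem A, applied inside the proof of Theorem \ref{subquotient_properties}(i): it transfers the Dress generating property from $SW_G$ to the sub-Green ring $\cA_{SW}$, and the naturality of the Burnside quotient construction then promotes this to $\cA_\cM$ for every Mackey functor $\cM$ arising from an additive $F$, yielding the uniform computability statement that Oliver's argument alone could not reach.
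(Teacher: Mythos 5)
Your first paragraph, identifying $\cM = F\circ d$ as a Mackey functor via the factorization $d = a\circ j$ through $\bAdot(G)$ and the result of \cite{htw2008}, is exactly the paper's opening step. The second step, however, contains a genuine gap. You propose to make $\cM$ a Green module over the full Swan ring $SW_G$ by having a finitely generated $\bZ H$-module $L$ represent a morphism $H \to H$ in $\RGMorita$ ``through the bimodule-from-biset construction.'' This cannot work: morphisms in $\RGMorita$ are represented (after the Grothendieck construction) by finite bisets, not by arbitrary bimodules, and a general $\bZ H$-module does not arise from any biset. The bimodule $L\otimes_\bZ RH$ has the form $RX$ for an $H$-$H$ biset $X$ only when $L$ is a permutation module $\bZ[H/K]$, in which case one may take $X = H/K\times H$. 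This is precisely the obstruction the authors emphasize in the remark following the theorem: Oliver's argument uses an action of the Swan ring, but ``the Swan ring does not act on our functors in any obvious way,'' and that failure is the entire reason the Burnside quotient Green ring is introduced.

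The paper's actual proof replaces $SW_G$ with its Burnside quotient Green ring $\cA_{SW}$, which by Example \ref{swan_ring} has value $P(H,\bZ)\subset SW(H,\bZ)$ on $G/H$, the subring generated by permutation modules. Because these \emph{are} represented by bisets, the assignment $\bZ[H/K]\mapsto H/K\times H$ gives a genuine homomorphism $P(H,\bZ)\to\Hom_{\RGMorita}(H,H)$ and hence a pairing $\cA_{SW}\times\cM\to\cM$. Theorem A (Theorem \ref{main_theorem}) is then invoked once, in Example \ref{swan_ring}, to transfer the hyperelementary Dress generating set from the Swan ring to $\cA_{SW}$; it is not ``applied inside the proof of Theorem \ref{subquotient_properties}(i),'' as you describe. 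Your closing paragraph does gesture toward $\cA_{SW}$ and Theorem A, but it leaves the middle paragraph's unworkable $SW_G$-action in place rather than replacing it, and so the proposal as written does not establish the needed module structure.
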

\begin{proof}
The  functor $d\colon \cD(G) \to \RGMorita$  factors through $\bAdot(G)$, so we obtain Mackey functors by composition. We will show that any such  Mackey functor, $\cM$, is a Green module over the Burnside quotient Green ring $\cA_{SW}$ of the Swan ring,  and then apply Example \ref{swan_ring}. Let $L = \bZ [H/K]$ denote a permutation module, for some subgroups $K \subset H$ of $G$, and let $X$ denote an $H$-$H$-biset, which is free as a left $H$-set. Then $H/K \times X$ is again an $H$-$H$-biset by the formula $h_1(hK, x)h_2 = (h_1hK, h_1xh_2)$, for all $h, h_1, h_2 \in H$ and all $x\in X$. Note that $R[H/K \times X] = L\otimes_\bZ RX$  as $RH$-$RH$ bimodules, so this
 construction applied to $X = {}_HH_H$, sending $\bZ[H/K] \mapsto H/K \times H$, gives a well-defined homomorphism
$$P(H, \bZ) \to \Hom_{\RGMorita}(H,H)$$
from the Grothendieck group of permutation modules, for each subgroup $H \subset G$. The adjoints of these homomorphisms give a pairing $\cA_{SW} \times \cM \to \cM$, and the Green module properties follow easily from bimodule identities (compare \cite[11.2]{oliver3}).
Since $\cA_{SW}$ is 
 hyperelementary computable,  we conclude that any Mackey functor out of $\RMorita$ is hyperelementary computable.
 \end{proof}
 \begin{remark} As mentioned in the Introduction, this is a refinement of an earlier result of Oliver \cite[11.2]{oliver3}. Oliver establishes hyperelementary computability for functors of the form $X(R[G])$, where $X$ is an additive functor from the category of $R$-orders in semisimple $K$-algebras with bimodule morphisms to the category of abelian groups. Here $R$ is a Dedekind domain with quotient field $K$ of characteristic zero. 
 
 There are two points of comparison: it should first be noted that Oliver \cite[p.~246]{oliver3} is dealing with Mackey functors defined on the category of finite groups and monomorphisms, so the statement that any such functor $X(R[G])$ is a Mackey functor is straight-forward. In our case, relating $RG$-Morita to Mackey functors defined on finite $G$-sets in the sense of Dress \cite[p.~301]{dress2} involves some work (e.g.~in constructing the bifunctor $d$). The translation between the two versions of Mackey functors is also well-known to the experts (see \cite[Section 1]{dress2}), but in this paper we preferred to work only with the Dress $G$-set theory. 
 
 The second point of comparison is that Oliver's proof uses an action of the Swan ring on the Mackey functors $X(R[G])$, but the Swan ring does not act on our functors in any obvious way. The key new ingredient in our proof is the Burnside quotient Green ring of the Swan ring. Apart from this additional input, the argument is essentially the same. However, the extra generality can be useful since there are functors out of $RG$-Morita which do not appear to extend to the setting of \cite[11.2]{oliver3}.  \qed
  \end{remark}
 \begin{example}[Controlled topology] The bounded categories $\cC_{M, G}(R)$ of \cite[\S 4]{hp1}, and the continuously controlled categories $\cB_G(X \times [0,1);R)$ of
 \cite[\S 6]{hp4} are  functors out of $\bAdot(G)$, for any finite group $G$, and hence any additive functor from these categories to abelian groups gives a Mackey functor on $\cD(G)$. 
\end{example}
\begin{example}[Farrell-Hsiang induction] There is a useful extension of induction theory to (possibly) infinite groups, due to Farrell and Hsiang \cite{farrell-hsiang2}. Given any representation $\pr\colon \Gamma \to G$, with $G$ finite, we get a new $R$-group ring functor $\cA(G) \to \RMorita$ by sending $G/H \mapsto R[\Gamma_H]$, where $\Gamma_H = \pr^{-1}(H)$ is the pre-image of $H$ in $\Gamma$. We have a generating set for the morphisms $\Hom_{\bA(G)}(H_1, H_2)$  consisting of the bisets $H_2 \times_K H_1$, where $K \subset H_2\times H_1$ is a subgroup (see \cite[1.A.9]{htw3}). We send the biset $H_2 \times_K H_1$ to the bimodule $R[\Gamma_{H_2}]\otimes_{R[\Gamma_{K}]} R[\Gamma_{H_1}]$. By composition with any additive functor $F\colon \RMorita \to \Ab$, we again obtain Mackey functors. Since the Swan ring acts on $\RMorita$ (by tensor product as above), any such Mackey functor is a a Green module over the Swan ring, and we obtain hyperelementary computation as before. The main examples are listed in \cite[1.A.12]{htw3}, including Quillen $K$-theory $K_n(R[\Gamma])$. 
\end{example}
\begin{remark}
An alternate (and slightly sharper) formulation of this example could be given by defining $\RGammaMorita$ for any discrete group $\Gamma$: the objects are finite groups $H$ isomorphic to some subquotient $H\cong \Gamma_1/\Gamma_0$ of $\Gamma$, where $\Gamma_0 \triangleleft \Gamma_1$ and $\Gamma_1$ is finite index in $\Gamma$. The morphisms are $H_2$-$H_1$ bisets as before. Then from any representation $\pr\colon \Gamma \to G$, where $G$ is finite, we get a functor $d\colon \cD(G) \to \RGammaMorita$ and  Theorem \ref{rgmorita}  holds in this new setting.
\end{remark}
\begin{example}[Cohomotopy]\label{cohomotopy}
Lam, \cite[Ex.~2.1 and Remark 2.4, p\prd108-109]{lam1}, 
shows that cohomology (ordinary or Tate) with twisted coefficients
$H^i(\qm;M)$ is a Mackey functor on $\cD(G)$ where $M$ 
is a fixed $G${-}module. 
Since the cohomotopy Green ring $H \mapsto \pi ^0(BH)$ acts on this 
Mackey functor, it is Sylow computable. 
If $\pr\colon\Gamma \to G$ is a homomorphism and $M$ is a $\Gamma${-}module,
then $H^i(\pr^{-1}(\qm);M)$ is also a Mackey functor on $\cD(G)$ with the 
cohomotopy Green ring acting. 
An interesting example of this situation is Galois cohomology.
\end{example}

\section{Pseudo-Mackey functors and pseudo-complexes}\label{four}
 We wish to apply the computation strategy described above to a more general situation, namely to study functors which have induction and restriction but are not known to be Mackey.  The main examples of interest are the higher Whitehead groups $\wh_n(\bZ G)$ and the non-oriented surgery obstruction groups $L_n(\bZ G, \omega)$.
\begin{definition} A covariant \emph{pre-functor} $f\colon \cD \to \cE$ between two categories is just a function $S \mapsto ob(f)(S)$ on objects, and a function $$hom(f) \colon \Hom_\cD(S_1,S_2) \to \Hom_\cE(ob(f)(S_1), ob(f)(S_2))$$ on Hom-sets. A functor is a pre-functor which preserves identities and compositions. Similarly, we define a contravariant pre-functor, and a \emph{pre-bifunctor} then consists of a pair
$(f_\ast, f^\ast)$ of pre-functors, where $f_\ast$ is covariant, $f^\ast$ is contravariant, and $ob(f_\ast) = ob(f^\ast)$. We call these \emph{Mackey pre-functors} if $\cD = \DG$ and $\cE = \ab$.

A  \emph{pre-natural transformation} $T\colon f_1\to f_2$ is a function 
$$S\mapsto T(S) \in \Hom_\cE(ob(f_1)(S), ob(f_2)(S))\ .$$
 A \emph{natural transformation} of (covariant) pre-functors is a pre-natural transformation \newline
$T\colon f_1\to f_2$ such that the diagram
$$\xymatrix{ob(f_1)(S_1)\quad \ar[r]^{hom(f_1)(\phi)}\ar[d]_{T(S_1)}&\quad
ob(f_1)(S_2) \ar[d]^{T(S_2)}\\
ob(f_2)(S_1)\quad \ar[r]^{hom(f_2)(\phi)}&\quad
ob(f_2)(S_2)\\}$$
commutes for all pairs of objects $S_1, S_2\in\cD$ and all 
$\phi\in \Hom_\cD(S_1,S_2)$.
There is a similar definition for (pre-)natural transformations of contravariant pre-functors, and a natural transformation of pre-bifunctors is a single function which is natural transformation for both the covariant and contravariant parts of the bifunctor.   A \emph{pre-pairing} between three Mackey pre-functors $\cM$, $\cN$ and $\cL$ is a collection of functions
$\mu(S) \colon \cM(S) \times \cN(S) \to \cL(S)$. Finally, if $\cM \to \cN$ is an injective natural transformation of Mackey pre-functors, then we say that $\cM$ is a sub Mackey pre-functor of $\cN$.
\end{definition}
Note that if $\cM\colon \DG \to \ab$ is a Mackey pre-functor, we can apply $\cM$ to any of the Amitsur complexes $Am(X,Y)$, and obtain
$\bd_r$ and $\delta^r$ maps as usual, but we can not be sure that
$\bd_r\circ \bd_{r+1}=0$ or $\delta^{r+1}\circ\delta^{r} =0$. We call 
$\cM(Am(X,Y))$ a \emph{pre-Amitsur} complex. This construction gives a pre-functor $\DG \times \DG \to Chain(\ab)$.
\begin{definition} A Mackey pre-functor $\cM$ is called a \emph{pseudo-Mackey functor} provided that there exists a finite collection of Mackey pre-functors  $0=\cN_0\subset \cN_1\subset\dots \subset \cN_r=\cM$ such that the quotient pre-bifunctors $\cN_i/\cN_{i-1}$  are actually  Mackey functors, for $1\leq i \leq r$. The collection $\{\cN_i/\cN_{i-1}\vv 1\leq i \leq r\}$ will be called the \emph{associated graded} Mackey functor to $\cM$.
\end{definition}
A natural transformation $\cM \to \cN$ of pseudo-Mackey functors is a natural transformation of Mackey pre-functors which preserves the filtrations.
Notice that the Burnside ring $\cA$ ``acts" on a Mackey pre-functor via the usual formula (which gives a pre-pairing). The action of $\cA$ on a pseudo-Mackey functor $\cM$ preserves the filtration, and the induced action on the sub-quotients $\cN_i/\cN_{i-1}$ is the usual action.

We say that a finite $G$-set $X$ is a Dress generating set for a pseudo-Mackey functor $\cM$ provided $X$ is a Dress generating set for each of the Mackey functors $\cN_i/\cN_{i-1}$ in its associated filtration. This agrees with our previous definitions if $\cM$ is a Mackey functor filtered by Mackey sub-functors.   Notice that the image of the natural map of Green rings $\cA \to \bigoplus_{i=1}^r \cA_{\cN_i/\cN_{i-1}}$ is a Green ring with $X$ as a Dress generating set. It follows that there exists an element $a \in \cA(hyper_p\text{-}X)\otimes \bZ_{(p)}$, for each prime $p$, whose image
in $\cA(\bullet)\otimes \bZ_{(p)}$ acts as $1_\bullet$ on each
sub-quotient $\cN_i/\cN_{i-1}(\bullet) \otimes \bZ_{(p)}$, $1\leq i \leq r$.
\begin{lemma}\label{pseudo_one}
Suppose that $\cM_0\to \cM_1$ and $\cM_1\to \cM_2$ are  natural transformations of Mackey pre-functors, such that $\cM_0(Y) \to \cM_1(Y) \to \cM_2(Y)$ is exact for every finite $G$-set $Y$.
If $\cM_0$ and $\cM_2$ are pseudo-Mackey functors, then   $\cM_1$ is a pseudo-Mackey functor. Moreover, if $X$ is a Dress generating set for $\cM_0$ and $\cM_2$, then $X$ is a Dress generating set for $\cM_1$.
\end{lemma}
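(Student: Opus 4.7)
The plan is to construct an explicit filtration of $\cM_1$ by splicing together filtrations of $\cM_0$ and $\cM_2$, using the hypothesis that the sequence $\cM_0 \to \cM_1 \to \cM_2$ is pointwise exact.

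Let $\alpha\colon \cM_0 \to \cM_1$ and $\beta\colon \cM_1 \to \cM_2$ be the given natural transformations, and choose filtrations
$0 = \cN_0 \subset \cN_1 \subset \dots \subset \cN_r = \cM_0$
and
$0 = \cL_0 \subset \cL_1 \subset \dots \subset \cL_s = \cM_2$
exhibiting $\cM_0$ and $\cM_2$ as pseudo-Mackey functors. First I would define sub-Mackey pre-functors of $\cM_1$ by
$F_i = \alpha(\cN_i)$ for $0 \le i \le r$, and $F_{r+j} = \beta^{-1}(\cL_j)$ for $0 \le j \le s$.
By pointwise exactness, $F_r = \alpha(\cM_0) = \ker(\beta) = \beta^{-1}(0) = F_{r+0}$, so the two pieces glue consistently, and $F_{r+s} = \cM_1$. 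This gives a finite filtration
$0 = F_0 \subset F_1 \subset \dots \subset F_{r+s} = \cM_1$
of $\cM_1$ by Mackey pre-functors.

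Next I would verify that each associated graded piece is an honest Mackey functor. For $1 \le i \le r$, the map $\alpha$ induces a surjection $\cN_i/\cN_{i-1} \twoheadrightarrow F_i/F_{i-1}$, so $F_i/F_{i-1}$ is a quotient bifunctor of the Mackey functor $\cN_i/\cN_{i-1}$; since the kernel of this quotient is a sub-bifunctor of a Mackey functor, hence Mackey, the quotient $F_i/F_{i-1}$ is itself Mackey (cokernels of Mackey maps are Mackey, as noted early in Section \ref{one}). For $1 \le j \le s$, the map $\beta$ induces an injection $F_{r+j}/F_{r+j-1} \hookrightarrow \cL_j/\cL_{j-1}$, and being a sub-bifunctor of a Mackey functor, $F_{r+j}/F_{r+j-1}$ is Mackey. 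This shows that $\cM_1$ is a pseudo-Mackey functor with the above associated graded filtration.

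Finally, for the Dress generating set statement, suppose $X$ is a Dress generating set for both $\cM_0$ and $\cM_2$. By definition, $X$ is a Dress generating set for each Mackey functor $\cN_i/\cN_{i-1}$ and each $\cL_j/\cL_{j-1}$. Since $F_i/F_{i-1}$ is a quotient of $\cN_i/\cN_{i-1}$ and $F_{r+j}/F_{r+j-1}$ is a sub-Mackey functor of $\cL_j/\cL_{j-1}$, Theorem \ref{subquotient_properties}(ii) gives that $X$ is a Dress generating set for every associated graded piece of the filtration of $\cM_1$; hence $X$ is a Dress generating set for $\cM_1$.

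The only potential obstacle is verifying that the sub-pre-functors $F_i$ and $F_{r+j}$ are genuine sub-Mackey pre-functors (i.e.\ that image and preimage along a natural transformation of Mackey pre-functors carry the bifunctor structure), but this is immediate from the functoriality of $\alpha$ and $\beta$ applied objectwise. The splicing at the level $F_r = \ker\beta = \Image\alpha$ is exactly the pointwise exactness hypothesis.
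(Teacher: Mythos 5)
Your proof is correct and is essentially the paper's own argument: the paper likewise takes the preimage filtration $\beta^{-1}(\cL_j)$ of $\cM_1$, observes that its graded pieces inject into those of $\cM_2$ and hence are Mackey, and then adjoins (via the image under $\alpha$) the filtration coming from $\cM_0$, using exactly the "sub-bifunctors and cokernels of Mackey functors are Mackey" facts and Theorem \ref{subquotient_properties} to conclude. Your version just makes the indexing and the splicing at $F_r = \ker\beta = \Image\alpha$ more explicit.
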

\begin{proof}
The pre-image of the associated filtration for $\cM_2$ gives a filtration 
$\cN_0\subset \cN_1\subset\dots \subset \cN_r=\cM_1$, with $\cM_0\subset \cN_i$ for $0\leq i \leq r$. Since a sub-bifunctor of a Mackey functor is Mackey, we see that the quotient pre-functors
$\cN_i/\cN_{i-1}$ are actually Mackey functors (and they all have Dress generating set $X$ by   Theorem \ref{subquotient_properties}). Now we extend this filtration by adjoining the associated filtration for $\cM_0$. Since each of the sub-quotients in this extended filtration have Dress generating set $X$, the result follows.
\end{proof}

We also get a computational result for pseudo-Mackey functors. The Amitsur pre-complex $(\cM_\ast(Am(X,Y), \bd_\ast)$ is now a pseudo-complex, meaning that the boundary maps $\bd_\ast$ are filtration-preserving (and the associated graded is an actual complex). It will be called \emph{pseudo-contractible} if it is equipped with degree +1 filtration-preserving natural  transformations  $$s_r\colon \cM(Am_r(X,Y)) \to \cM(Am_{r+1}(X,Y))$$ of  pre-functors, for $r\geq 0$, which contract  the Amitsur complexes for the associated graded Mackey functors to $\cM$.
The collection $s_\ast=\{s_r\}$ is called a \emph{pseudo-contraction}. We make a similar definition for the cochain Amitsur complex and the degree -1 cochain pseudo-contractions $\sigma^r$.

We can construct pseudo-contractions by using any element $a \in \cA(X)$ such that $a$ acts as $1_\bullet \in $  on each
sub-quotient $\cN_i/\cN_{i-1}(\bullet) \otimes \bZ_{(p)}$, $1\leq i \leq r$, to build
``chain homotopies" $s_r(a)$ and ``cochain homotopies" $\sigma^r(a)$. These are pseudo-contractions in the above sense.

\begin{proposition}\label{pseudo_two} Let $\cM$ be a pseudo-Mackey functor, and $X$, $Y$ finite $G$-sets. If  $(\cM_\ast(Am(X,Y), \bd_\ast)$ is  pseudo-contractible with pseudo-contraction $s_\ast$, then there are canonical filtration-preserving natural 
transformations  $(\bd^\prime_\ast, s^\prime_\ast)$  for which
$(\cM_\ast(Am(X,Y), \bd^\prime_\ast)$ is a chain complex, and $s^\prime_\ast$ is a chain contraction.  If the pseudo-complex was already a complex, $\bd^\prime_\ast = \bd_\ast$, and if in addition $s_\ast$ was already a contraction, then $s^\prime_\ast=s_\ast$. 
\end{proposition}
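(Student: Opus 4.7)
The plan is to apply a homological perturbation argument inductively along the filtration. Let $F_i := \cN_i(Am(X,Y))$, giving a finite filtration $0 = F_0 \subset F_1 \subset \cdots \subset F_r = C$ of the Amitsur pre-complex $C := \cM(Am(X,Y))$ of length $r$. I will say a natural transformation $f$ of pre-functors has \emph{filtration degree} $\leq -k$ if $f(F_i) \subset F_{i-k}$ for every $i$. By hypothesis, $\bd$ and $s$ preserve the filtration, while the two obstructions $\bd^2$ and $s\bd + \bd s - \mathrm{id}$ vanish on each quotient $F_i/F_{i-1}$ (since $\cN_i/\cN_{i-1}$ is Mackey and the pseudo-contraction restricts there to a genuine contraction), so both have filtration degree $\leq -1$.

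The first step is to correct $\bd$ while keeping $s$ fixed. I set $\bd_0 := \bd$ and inductively $\bd_{n+1} := \bd_n - s\,\bd_n^2$. Since $\bd_n - \bd$ is itself filtration-decreasing at each stage, $s\bd_n + \bd_n s = \mathrm{id} + \varepsilon_n$ with $\varepsilon_n$ of filtration degree $\leq -1$, and a short calculation using that $\bd_n$ commutes with $\bd_n^2$ will yield
\[
\bd_{n+1}^2 \;=\; -\varepsilon_n \bd_n^2 \;+\; s\,\bd_n^2\, s\,\bd_n^2.
\]
If $\bd_n^2$ has filtration degree $\leq -(n+1)$, both terms on the right have filtration degree $\leq -(n+2)$. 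After $r$ iterations the filtration degree drops below $-r$, forcing $\bd_r^2 = 0$; set $\bd' := \bd_r$. Note that if $\bd^2 = 0$ already, every correction term $s\bd_n^2$ vanishes, so $\bd' = \bd$ as required.

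With $(\bd')^2 = 0$ in hand, I correct $s$. Let $\tau_0 := s\bd' + \bd' s - \mathrm{id}$, still of filtration degree $\leq -1$. The vanishing of $(\bd')^2$ forces $\bd' \tau_n = \tau_n \bd'$ at every stage of the iteration $s_{n+1} := s_n(\mathrm{id} - \tau_n)$, $\tau_{n+1} := s_{n+1}\bd' + \bd' s_{n+1} - \mathrm{id}$, from which direct substitution gives the clean recurrence $\tau_{n+1} = -\tau_n^2$. Thus the filtration degree of $\tau_n$ at least doubles each step, and after $\lceil \log_2(r+1)\rceil$ iterations one has $\tau_n = 0$; set $s' := s_n$. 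If $s$ was already a contraction, $\tau_0 = 0$ and $s' = s$.

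The resulting pair $(\bd', s')$ consists of filtration-preserving natural transformations of pre-functors, built by explicit universal formulas from $\bd$ and $s$; they induce the same maps as $\bd, s$ on the associated graded and satisfy $(\bd')^2 = 0$ and $s'\bd' + \bd' s' = \mathrm{id}$, yielding the required chain complex and chain contraction. The main technical hurdle is the filtration-degree bookkeeping in the two iterations — pseudo-contractibility is exactly what is needed to strictly lower the filtration degree of the obstruction at each step, so that the iteration terminates in finitely many moves given the finite filtration length $r$.
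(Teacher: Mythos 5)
Your proof is correct, but it follows a genuinely different route from the paper. You perform a \emph{global} iteration in the filtration direction, in the style of the homological perturbation lemma: the recursion $\bd_{n+1} = \bd_n - s\bd_n^2$ pushes the obstruction $\bd_n^2$ one step down the filtration, and then the clean recurrence $\tau_{n+1} = -\tau_n^2$ kills the contraction defect with (eventually) superlinear speed. The paper instead argues by induction on the \emph{homological} degree $r$: at each stage it modifies only $\bd_{r+1}$ (via $\bd'_{r+1} = \bd_{r+1} - s_{r-1}\circ\bd_r\circ\bd_{r+1}$, using the already-established identity in degree $r-1$) and then $s_r$ (by inverting the unipotent operator $\psi_r = s_{r-1}\circ\bd_r + \bd'_{r+1}\circ s_r = 1 + u$), so that the corrected identities hold through degree $r$ before moving to $r+1$. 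The two arguments rely on the same two structural facts — that the defects vanish on the associated graded, and that a map which induces the identity on the associated graded of a finite filtration is invertible — but they deploy them in orthogonal directions. Your version has the advantage of producing uniform closed-form universal expressions for $\bd'$ and $s'$ (making naturality, canonicity, and the conditionals $\bd' = \bd$, $s' = s$ almost immediate, and manifestly preserving $\bd_1$ since $\bd_0 = 0$ forces $(\bd_n^2)_1 = 0$); the paper's degree-by-degree version requires tracking an inductive hypothesis but avoids any filtration-degree bookkeeping. Both terminate by finiteness of the filtration. The only place you should be slightly more explicit is the assertion that $\varepsilon_n := s\bd_n + \bd_n s - \mathrm{id}$ has filtration degree $\le -1$: this needs the observation that $\bd_n - \bd$ has filtration degree $\le -1$ (which follows since each increment $-s\bd_k^2$ does), combined with the pseudo-contraction hypothesis on $(\bd, s)$ — you gesture at this but it deserves one sentence.
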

\begin{proof}
Let $(C_i,\bd_i,s_i)$ be our data, where $\bd_i$ and $s_i$ are natural transformations. We assume that for $i<r$,
$\bd_i\circ\bd_{i+1} =0$, $\bd_{i+1}= \bd_{i+1}\circ s_i\circ\bd_{i+1}$,
and $s_{i-1}\circ \bd_i + \bd_{i+1}\circ s_i = 1_{C_i}$. For $r\leq 0$ these identities clearly hold. We proceed to show how these conditions may be achieved for $i=r$ by modifying $\bd_{r+1}$ and $s_r$ (if necessary).  Throughout the inductive construction, we do not change the maps induced by $(\bd_\ast, s_\ast)$ on the Amitsur complex for the associated graded Mackey functor  to $\cM$. We also note that
the process does not change the given $\bd_1\colon C_1 \to C_0$, but may change  $s_0$ in the first step.

First, let $\bd^\prime_{r+1} = \bd_{r+1}- s_{r-1}\circ \bd_{r}\circ\bd_{r+1}$. Then $\bd_{r}\circ \bd^\prime_{r+1}=0$ and if $\bd_{r}\circ\bd_{r+1}=0$ we have $\bd^\prime_{r+1} = \bd_{r+1}$. Note that both $\bd^\prime_{r+1}$ and $\bd_{r+1}$ preserve the induced filtration from $\cM$, and induce the same map on the  Amitsur complexes for the associated graded Mackey functor to $\cM$.

Next, we modify $s_{r}$. Let $\psi_{r} = s_{r-1}\circ \bd_r + \bd^\prime_{r+1}\circ s_r$. By construction, $\psi_{r}$ preserves the filtration and induces the identity on the associated graded. Hence, $\psi_r = 1_{C_r} + u$, where $u$ is nilpotent, and $\psi_r$ is invertible. Since
$\bd_r\circ\psi_r =\bd_r$, we can set $s^\prime_r= s_r\circ \psi_r^{-1}$ and obtain
$s_{r-1}\circ \bd_r + \bd^\prime_{r+1}\circ s^\prime_r = 1_{C_r}$ by pre-composing with $\psi_r$. Notice that if $s_r$ was already part of a chain contraction, then we do not alter it. It follows that $\bd^\prime_{r+1}= \bd^\prime_{r+1}\circ s^\prime_r\circ\bd^\prime_{r+1}$ and the induction step is complete. The naturality of $\bd_r^\prime$ and $\psi_r$ follow inductively from the explicit formulas. The naturality of $\psi_r$ implies the naturality of $s_r^\prime$ for use at the next step of the induction. Since no choices were involved in the construction of $(\bd^\prime_\ast, s^\prime_\ast)$, the new maps are canonically determined by the original data $(\bd_\ast, s_\ast)$.
\end{proof}
\begin{remark}   After this process, the new contractible complex gives an expression for $\cM(Y)$ as a direct summand of $\cM(X \times Y)$, with respect to the original induction map $\bd_1\colon\cM(X \times Y) \to  \cM(Y)$, and the new ``restriction" map $s_0' \colon \cM(Y) \to \colon\cM(X \times Y)$, since $\bd_1\circ s_0' = id$. 
In this situation, we say that $\cM(Y)$ is \emph{computed} from the family $\cF(X)$. If $\cM$ was actually a Mackey functor, computability is this sense would agree with the notion previously defined. Similar remarks apply to the contravariant version
$\cM^\ast(Am(X,Y), \delta^\ast)$.
\end{remark}

 We will also need a slight extension of this result. A \emph{filtered pre-complex} $(C, \bd)$ is a pre-complex of abelian groups equipped with a filtration 
$$C = F_0C \supset F_1C \supset F_2C \supset \dots$$
where each $F_iC$ is a pre-subcomplex of $(C, \bd)$, meaning that $\bd_r (F_iC_r) \subseteq F_iC_{r-1}$, for all $i, r$. 
We say that $(C,\bd)$ is a \emph{pseudo-complex} if the additional relation $\bd_r\circ \bd_{r+1} = 0$ holds, for all $r$, on each subquotient $F_iC/F_{i+1}C$.
We say that a pseudo-complex has a  \emph{pseudo-contraction} $s_\ast=(s_r)$ provided that $s_r(F_iC_r)\subseteq F_iC_{r+1}$, and $s_\ast$ induces an actual contraction on each subquotient complex $F_iC/F_{i+1}C$.

A pseudo-complex $(C, \bd)$ has a natural completion 
$$(C, \bd) \ \longrightarrow \ \invlim C/F_iC: = (\widehat C, \widehat \bd)$$
given by the inverse limit pre-complex with respect to the natural projections $C \to C/F_iC$, $i \geq 0$. A pseudo-contraction $s_\ast$ of $(C, \bd)$ induces a pre-contraction $\hat s_\ast$ of  $(\widehat C, \widehat \bd)$.

\begin{proposition}\label{pseudo_three} Let $(C, \bd)$ be a  pseudo-complex with filtration $\{F_iC\vv i \geq 0\}$.
 If $(C, \bd)$ admits a filtered pseudo-contraction $s_\ast$, then there exists canonical 
 data for which $ (\widehat C, \bd^\prime_\ast, s^\prime_\ast)$ is a contracted chain complex.   If the pseudo-complex was already a complex, $\bd^\prime_\ast = \hat \bd_\ast$, and if in addition $s_\ast$ was already a contraction, then $s^\prime_\ast=\hat s_\ast$. 
\end{proposition}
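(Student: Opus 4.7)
The plan is to reduce to Proposition~\ref{pseudo_two} by applying it at each finite level of the filtration, and then to pass to the inverse limit, using the canonicity statement in Proposition~\ref{pseudo_two} to glue the constructions together coherently.

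First, I would observe that for each $i\ge 0$ the quotient $C/F_iC$ carries an induced finite filtration of length $i$,
\[
C/F_iC = F_0C/F_iC \supset F_1C/F_iC \supset \dots \supset F_iC/F_iC = 0,
\]
by sub-pseudo-complexes, whose associated graded pieces $F_jC/F_{j+1}C$ are genuine chain complexes equipped with genuine contractions induced by $s_\ast$. The proof of Proposition~\ref{pseudo_two} is purely formal and applies verbatim to any filtered pseudo-complex with pseudo-contraction whose associated graded is a contracted complex. Hence it produces canonical data $(\bd_\ast^{(i)}, s_\ast^{(i)})$ on $C/F_iC$ making it into a contracted chain complex.

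Next, I would verify that $\{(C/F_iC, \bd_\ast^{(i)}, s_\ast^{(i)})\}_{i\ge 0}$ is an inverse system of contracted chain complexes under the projections $p_{i+1,i} \colon C/F_{i+1}C \to C/F_iC$. This uses the explicit recursive formulas from the proof of Proposition~\ref{pseudo_two}, namely $\bd'_{r+1} = \bd_{r+1} - s_{r-1}\bd_r\bd_{r+1}$ and $s'_r = s_r\circ \psi_r^{-1}$ with $\psi_r = 1 + u_r$, together with the fact that $p_{i+1,i}$ is a filtration-preserving pre-chain map intertwining the given data $(\bd_\ast, s_\ast)$. An induction on $r$ shows that $p_{i+1,i}$ sends $u_r^{(i+1)}$ to $u_r^{(i)}$, hence $\psi_r^{(i+1)}$ to $\psi_r^{(i)}$, and therefore carries $(\psi_r^{(i+1)})^{-1}$ to $(\psi_r^{(i)})^{-1}$; the canonicity of the construction then yields the desired compatibility of $(\bd_\ast^{(i)}, s_\ast^{(i)})$ across the tower.

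Finally, I would set $\bd'_\ast := \invlim \bd_\ast^{(i)}$ and $s'_\ast := \invlim s_\ast^{(i)}$ on $\widehat C = \invlim C/F_iC$. Since the identities $\bd\circ\bd=0$ and $s\bd+\bd s = \mathrm{id}$ hold at every finite level and are preserved under inverse limits, $(\widehat C, \bd'_\ast, s'_\ast)$ is a contracted chain complex, canonically determined by the original data. The consistency clauses in the last sentence of the proposition are inherited from the corresponding clauses of Proposition~\ref{pseudo_two} at each level, since if $(\bd_\ast, s_\ast)$ is already a contracted complex the recursive formulas leave it unchanged. The main delicate point I expect is the compatibility of the units $\psi_r = 1 + u_r$ across the tower: it is essential to invert $\psi_r$ only after projecting to $C/F_iC$, where $u_r$ is genuinely nilpotent because the filtration on $C/F_iC$ has finite length, rather than attempting to invert $\psi_r$ on $C$ itself, where $u_r$ is only topologically nilpotent.
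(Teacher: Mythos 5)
Your proposal is correct and follows essentially the same route as the paper: apply the Proposition~\ref{pseudo_two} construction level-by-level on the finite-length quotients $C/F_iC$, where $u_r$ is genuinely nilpotent and $\psi_r=1+u_r$ is invertible, note that the canonical formulas make the resulting contraction data compatible with the projections $C/F_{i+1}C\to C/F_iC$, and pass to the inverse limit $\widehat C$. You spell out the compatibility argument and the point about only inverting $\psi_r$ after projecting in more detail than the paper does, but the underlying argument is the same.
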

\begin{proof} The proof follows the same outline as for Proposition \ref{pseudo_two}, but we notice that the map $\psi_r = 1_{C_r} + u$ has the additional property that $u^{i+1} = 0$ on the quotient $C_r/F_{i}C_r$. This follows by induction from the exact sequences 
$$0 \to F_{i+1}C/F_iC \to F_0C/F_{i+1}C \to F_0C/F_iC \to 0$$
of pseudo-contractible complexes.
Then $\psi_r$ induces an invertible map on $ C_r/F_{i}C_r$, for each $i \geq 0$. We define $s^\prime_r = s_r\circ \psi^{-1}_r$ on $ C_r/F_{i}C_r$ as before. By induction, we have constructed contraction data
$(C/F_{i}C, \bd^\prime, s^\prime)$, for each $i \geq 0$. In addition, this contraction data is compatible with the projections $C/F_{i+1}C \to C/F_i$, and hence induce contraction data $(\widehat C,  \bd^\prime, s^\prime)$ for the inverse limit complex.
\end{proof}
\begin{remark} Once again, this process doesn't change $ \bd_1$, so
the new contractible complex gives an expression for $\widehat C_0$ as a direct summand of $\widehat C_1$, with respect to completion of  the original boundary map $\bd_1\colon C_1 \to  C_0$.
\end{remark}

\begin{example}[Whitehead groups]
Define the Whitehead groups, $Wh_n(\bZ G)$, as the homotopy groups of the
spectrum which is the cofibre of the Loday assembly map
$$BG^+\wedge K(\bZ) \to K(\bZ G).$$
The Loday assembly map is a map of bifunctors, 
\cite[Main Thm, p.~223]{nicas1},
and the Whitehead groups are bifunctors.
Furthermore, the $Wh_n$, $n\leq 3$, are Mackey functors, 
but it is not obvious from this description that the
other higher Whitehead groups are actually Mackey functors.
However, from the long exact sequence in homotopy theory we see that they 
are pseudo{-}Mackey functors.
From Example \ref{swan_ring}, Example \ref{cohomotopy} and Proposition \ref{pseudo_two}, we see that the $Wh_n(\bZ G)$ 
are computed by the hyperelementary family.
Similarly, the $Wh_n(\bZ G)\otimes \bZ_{(p)}$ are computed
by the $p${-}hyperelementary family (see  \cite{swan2},  \cite{lam1}, and \cite{nicas1} for partial results in this direction).

\end{example}
\begin{example}[Tate cohomology]
The Tate cohomology of $Wh_n$ or Quillen's $K_n$  are bifunctors
which are subquotients of $Wh_n$ or $K_n$, and hence are computed
by the hyperelementary family.
The localization maps $Wh_n\to Wh_n\otimes\bZ_{(2)}$ and
$K_n\to K_n\otimes\bZ_{(2)}$ induce isomorphisms on Tate cohomology.
Hence the Tate cohomology is  computed by the
2{-}hyperelementary family.
Given any pseudo{-}Mackey subfunctor of $Wh_n$ or $K_n$
which is invariant under the involution, we can form the 
Tate cohomology and this Tate cohomology functor is computed
by any family which contains the 2{-}hyperelementary family. \qed
\end{example}

\section{Surgery obstruction groups}\label{five}
In \cite[Theorem 1]{dress2}, Dress claims computability results for ``any of the $L$-functors defined by {C.~T.~C.~Wall" in \cite{wall-VI}}. However, the non-oriented $L$-groups $L_n(\bZ G, \omega)$ are not always Mackey functors, and so the techniques described by Dress in \cite{dress2}   do not appear to be adequate to prove the result in this generality. The point is that an inner automorphism by an element $g\in G$ with $\omega(g) = -1$ induces multiplication by $-1$ (which may not be the identity) on $L_n(\bZ G, \omega)$
(see \cite{taylor1}).
 One of the main applications of our more general techniques is to supply a proof that non-oriented $L$-theory is hyperelementary computable,   in the sense that $L_n(\bZ G,\omega)$ is the limit of restrictions or inductions involving hyperelementary subgroups of $G$.

Fix a finite group $G$, and the geometric antistructure for 
which $\theta = id$ 
and $b = e \in G$, \cite[1.B.3]{htw3}. 
Let $\omega\colon G \to \{\pm 1\}$ be a fixed orientation homomorphism, and for each subgroup $H\subset G$ let 
$\omega_H=  \omega|_H$. 
We define the following  categories:
\begin{enumerate}
\item $\bA(G,\omega)$, with objects finite groups $H$ isomorphic to some subgroup of  $G$, and morphisms
given by a Grothendieck group construction on finite biset forms $(X, \omega_X)$ (see \cite[p.~256]{htw3} for the definition). We construct $\bAdot(G,\omega)$ by taking the additive completion.
\item $\RbarMorita$, with objects and morphisms as defined in \cite[1.B.2]{htw3}, and the quotient category $\RbarWitt$ from \cite[1.C.2]{htw3}, for any commutative ring $R$ with unit.
\item $\RGomegaMorita$, with objects  $H$ isomorphic to some subquotient $K/N$ of $G$, with $N \subset \ker \omega$, and morphisms given by the Grothendieck group construction   on finite biset forms $(X, \omega_X)$, modulo an equivalence relation, as defined in \cite[1.B.3]{htw3}. We can define
the analogous quotient category $\RGomegaWitt$ by setting metabolic forms (see \cite[p.~254]{htw3}) to zero in the morphisms.
\end{enumerate}
Notice that by forgetting the orientation map $\omega$ we get functors into the categories discussed in Section \ref{three}. The construction of Definition \ref{DtoBurnside} gives a pre-bifunctor
$$j \colon \cD(G) \to \bAdot(G, \omega)$$
extending the pre-functor $\Or(G) \to \bAdot(G, \omega)$ out of the orbit category, defined on objects by $G/H \mapsto H$ and on morphisms by sending the $G$-map $f\colon G/H \to G/K$, given by $f(eH) =gK$,
 to the biset form $(\biset{K}{K}{g^{-1}Kg}, \omega_K)$. This definition depends on the choice of  coset representative $g$ for the  morphism $f$ in $\Or(G)$, since this time, if $x\in K$ and $\omega(x) = -1$, the two morphisms $eH \mapsto gK$ and $eH \mapsto gxK$ are sent to different biset forms.

\begin{lemma}\label{pretomackey}
In $\bAdot(G,\omega)$, the morphism $\left [\biset{H}{H}{x^{-1}Hx}, \omega_H\right ] = \omega(x)\cdot id$ for all $x\in H$.
If $F\colon \bAdot(G, \omega)\to \Ab$ is an additive functor, then $$F\circ j \colon \bAdot(G, \omega) \to \Ab$$ is a Mackey pre-functor, which is a Mackey functor if and only if all the inner automorphism morphisms
$F({_H}H_{x^{-1}Hx}, \omega_H) = id$, for all $x\in H$.
\end{lemma}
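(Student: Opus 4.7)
The plan breaks into two steps: first I establish the sign identity for the inner automorphism biset forms, and then I use it to pin down exactly when $F \circ j$ promotes from a Mackey pre-functor to a Mackey functor.

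For the sign identity, I construct the explicit biset isomorphism
$$\phi \colon \biset{H}{H}{x^{-1}Hx} \longrightarrow \biset{H}{H}{H}, \qquad \phi(k) = k x^{-1},$$
which is clearly left $H$-equivariant and intertwines the twisted right $x^{-1}Hx$-action on the source with the standard right $H$-action on the target (a one-line check: $\phi(k(x^{-1}hx)) = kx^{-1}h = \phi(k)h$). Next I track the orientation data: pulling $\omega_H$ back along $\phi$ yields $\omega_H(\phi(k)) = \omega(kx^{-1}) = \omega(x)\cdot \omega_H(k)$, using that $\omega(x)^{-1} = \omega(x)$. Under the equivalence relation on biset forms from \cite[1.B.3]{htw3}, where rescaling the orientation $\omega_X$ by $\epsilon \in \{\pm 1\}$ corresponds to scalar multiplication of the morphism class by $\epsilon$, this biset isomorphism identifies $[\biset{H}{H}{x^{-1}Hx}, \omega_H]$ with $\omega(x) \cdot [\biset{H}{H}{H}, \omega_H] = \omega(x)\cdot id$ in $\bAdot(G,\omega)$.

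For the Mackey functor characterization, recall that the pre-bifunctor $j$ sends a $G$-map $f\colon G/H \to G/K$ with $f(eH) = gK$ to the biset form $(\biset{K}{K}{g^{-1}Hg}, \omega_K)$. Since this depends on the choice of coset representative $g$, only a pre-bifunctor is guaranteed; nevertheless, for each fixed choice the Mackey axioms (M1) and (M2) hold by the same biset identities used to treat the unoriented functor $d$ in Section \ref{three}, and the additivity of $F$ transports these to $F \circ j$, making it a Mackey pre-functor on $\cD(G)$. A second coset representative $gx$ with $x \in K$ produces $(\biset{K}{K}{x^{-1}g^{-1}Hgx}, \omega_K)$, which differs from the first by precomposition (under the balanced product) with the inner automorphism form $(\biset{K}{K}{x^{-1}Kx}, \omega_K)$; by the first part of the lemma, the latter equals $\omega(x)\cdot id_K$ in $\bAdot(G,\omega)$. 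Applying the additive functor $F$, the two choices yield the same morphism in $\Ab$ precisely when $F(\biset{H}{H}{x^{-1}Hx}, \omega_H) = id$ for every subgroup $H$ and every $x \in H$, which is the stated criterion. Similar checks—on identities and on compositions—all reduce to the same coset-representative ambiguity, so this single condition suffices.

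The main obstacle is the careful sign bookkeeping for the orientation under the biset isomorphism $\phi$ in the first step, matching the conventions for $(X,\omega_X)$ and its morphism class in $\bAdot(G,\omega)$ as set up in \cite[1.B.3]{htw3}. Once that normalization is in hand, the link between coset-representative changes and inner automorphism biset forms is a direct calculation, and the Mackey functor characterization is essentially formal.
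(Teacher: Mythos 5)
Your first part is correct and is essentially the paper's own computation: the biset map $\phi(k)=kx^{-1}$ (this is exactly the map in \eqref{bisetbijection} specialized to $g=x$, $g_1=e$, $K=H$) together with the orientation bookkeeping $\omega_H(kx^{-1})=\omega(x)\,\omega_H(k)$ gives the isometry $(\biset{H}{H}{x^{-1}Hx},\omega_H)\cong(\biset{H}{H}{H},\omega(x)\cdot\omega_H)$, and hence the sign identity. Your identification of the change of coset representative with composition against the inner automorphism biset form is also correct (and is the same observation the paper makes, phrased as ``the property (M1) depends on conjugations acting trivially'').

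However, one claim in your second paragraph is both unnecessary and wrong as stated: you assert that ``for each fixed choice the Mackey axioms (M1) and (M2) hold by the same biset identities used to treat the unoriented functor $d$.'' This cannot be right: if a fixed choice of coset representatives already made (M1) hold, then $F\circ j$ would always be a Mackey functor, contradicting the very content of the lemma. What the unoriented theory gives you is an isomorphism of \emph{underlying bisets} on the two sides of (M1); but that isomorphism is built from the inner-automorphism maps of \eqref{bisetbijection}, which need not preserve the orientation, so the two sides of (M1) in $\bAdot(G,\omega)$ can differ by a sign $\omega(x)$. (The double coset decomposition of $\biset{K}{G}{H}$ picks up a factor $\omega(g)$ on the piece indexed by $KgH$, and this factor depends on $g$ within its double coset.) Fortunately you do not need this claim: by the paper's Definition in Section \ref{four}, a Mackey pre-functor is \emph{just} a pre-bifunctor $\cD(G)\to\ab$ with no axioms beyond being a pair of functions on objects and Hom-sets, so $F\circ j$ is a Mackey pre-functor automatically once coset representatives are fixed. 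The substantive ``if and only if'' then reduces exactly to the observation that all the conjugation ambiguities — in identities, compositions, and the double coset formula (M1) — are governed by the inner automorphism biset forms, which your last paragraph does argue (at about the same level of detail as the paper's own terse proof). Delete or correct the (M1)/(M2) sentence and the argument stands.
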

\begin{proof}  The identity morphism in $\bAdot(G,\omega)$ is represented by the biset form $(\biset{H}{H}{H}, \omega_H)$.
The map $\psi\colon _HH_H \to {_H}H_{xHx^{-1}}$ of biset forms defined by $\psi(h) =hx^{-1}$, see (\ref{bisetbijection}), induces an isometry of biset forms
$(\biset{H}{H}{x^{-1}Hx}, \omega_H)\cong  (\biset{H}{H}{H}, \omega(x)\cdot\omega_H)$
and hence 
$$\left [\biset{H}{H}{x^{-1}Hx}, \omega_H\right ] = \omega(x)\cdot id$$
in the Grothendieck group of morphisms of $\bAdot(G,\omega)$.

The property (M1) depends on conjugations acting trivially, or in other words,should induce $F(\psi) = id$ for all $x\in H$ (including those with $\omega(x) = -1$). 
\end{proof}

The $R$-group ring functor of \cite[1.B.4]{htw3} induces
a functor from
$\bAdot(G, \omega)$ to $\RGomegaMorita$ or further into $\RbarWitt$.
The required formulas are in section 1.B of \cite{htw3},
including the remark that since our morphisms are formed via
a Grothendieck construction, we are entitled to equate metabolics
on isomorphic modules.
There is a functor $a\colon \bAdot(G,\omega) \to \RGomegaMorita$ as before, and we let   $$d\colon \cD(G) \to \RGomegaMorita$$ be the pre-bifunctor $d=a\circ j$.
There is a homomorphism from the Dress ring 
$$GU(H, \bZ)  \to \Hom_{\RbarMorita}(H, H)$$
given by tensor product (see \cite{dress2} where it is
asserted that $GU(G, \bZ )$ acts on $L${-}theory, or   
\cite[p.~143]{hrt1} for explicit formulas).
Dress \cite{dress2} showed that the 
the hyperelementary 
family contracts the Dress ring. 
We observe that the same formulas give an action of
the Burnside quotient Green ring $\cA_{GU}$ on 
$\RGomegaMorita$.

 \begin{theorem}\label{thmc}  
  Let $F\colon \RGomegaMorita \to \Ab$ be an additive functor. 
 Then $F\circ d\colon \cD(G) \to \Ab$ is a Mackey pre-functor, and
  the $2$-adic completion of any such
 Mackey pre-functor is $2$-hyperelementary computable.
  If $\cM = F\circ d$ is a Mackey functor, then $\cM$ is  hyperelementary computable.
\end{theorem}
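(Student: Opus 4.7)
The plan is to first establish that $\cM = F \circ d$ is a Mackey pre-functor, and then handle part (ii) and part (i) separately, with the latter requiring the pseudo-Mackey machinery of Section \ref{four}.

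Since $d = a \circ j$ factors through $\bAdot(G, \omega)$, the composite $F \circ d = (F \circ a) \circ j$ is a Mackey pre-functor by Lemma \ref{pretomackey}. For part (ii), suppose $\cM$ is a genuine Mackey functor. As noted just before the statement of Theorem C, the Burnside quotient Green ring $\cA_{GU}$ of the Dress ring acts on $\RGomegaMorita$ via tensor product, inducing a Green module structure on $\cM$. Since Dress's theorem \cite[Theorem 1]{dress2} implies hyperelementary computability of $GU$, and hence of $\cA_{GU}$ by Theorem \ref{main_theorem}, Theorem \ref{subquotient_properties}(i) yields hyperelementary computability of $\cM$.

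For part (i), the key observation is that the failure of $\cM$ to be a Mackey functor is concentrated at the prime $2$: by Lemma \ref{pretomackey}, inner automorphism by $x \in H$ with $\omega(x) = -1$ acts on $\cM(H)$ as $-\text{id}$ rather than $+\text{id}$, and the discrepancy $2\cdot \text{id}$ vanishes modulo $2$. For each $n \geq 1$, the quotient pre-functor $\cM/2^n\cM$ carries the finite filtration
$$0 = 2^n\cM/2^n\cM \subset 2^{n-1}\cM/2^n\cM \subset \cdots \subset 2\cM/2^n\cM \subset \cM/2^n\cM,$$
whose successive quotients $2^r\cM/2^{r+1}\cM$ are $\bZ/2$-modules on which every inner automorphism acts trivially; these are therefore bona fide Mackey functors, making each $\cM/2^n\cM$ a pseudo-Mackey functor in the sense of Section \ref{four}. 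The $\cA_{GU}$-module structure preserves the $2$-adic filtration, and at the prime $2$ Dress's theorem refines to $2$-hyperelementary computability of $\cA_{GU}\otimes \bZ_{(2)}$ in the standard Brauer-style $p$-primary fashion. Consequently each graded piece $2^r\cM/2^{r+1}\cM$ admits $X_{2\cH}$ as a Dress generating set, and Proposition \ref{pseudo_two} produces canonical contractions of the pre-Amitsur complex $(\cM/2^n\cM)(Am(X_{2\cH}, Y))$. Passing to the inverse limit via Proposition \ref{pseudo_three} yields a contracted Amitsur complex for $\hat\cM_2$ at $X_{2\cH}$, which is precisely $2$-hyperelementary computability.

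The main obstacle will be verifying that the pseudo-contractions produced at each level $\cM/2^n\cM$ can be chosen compatibly with the projections $\cM/2^{n+1}\cM \to \cM/2^n\cM$, so as to assemble into a filtered pseudo-contraction on the $2$-adic completion satisfying the hypotheses of Proposition \ref{pseudo_three}. This should follow from the naturality and canonicity already asserted in Propositions \ref{pseudo_two} and \ref{pseudo_three}, combined with Remark \ref{lifting}: the lifting argument there produces elements of $\cA(hyper_2\text{-}X_\cH)\otimes \bZ_{(2)}$ which simultaneously act as the required pseudo-contractions on every filtration quotient, and hence descend to the inverse system.
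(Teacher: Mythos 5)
Your proof follows essentially the same route as the paper: part (ii) via Lemma \ref{pretomackey} and the $\cA_{GU}$ action together with Theorem \ref{main_theorem}, and part (i) via the $2$-adic filtration whose graded pieces are $\bZ/2$-modules (hence Mackey functors), followed by the pseudo-complex machinery of Proposition \ref{pseudo_three}. The one stylistic difference is that the paper filters $F$ directly by $F_i = 2^iF$ and applies Proposition \ref{pseudo_three} once to the resulting (infinitely) filtered pseudo-complex, whereas you first treat each finite quotient $\cM/2^n\cM$ as a pseudo-Mackey functor, invoke Proposition \ref{pseudo_two} for each, and only then pass to the inverse limit; this forces you to flag a compatibility concern that is already built into the proof of Proposition \ref{pseudo_three} (there is a single element $a\in\cA(hyper_2\text{-}X)\otimes\bZ_{(2)}$ acting as $1_\bullet$ on all graded pieces, because $\cA_{GU}\otimes\bZ_{(2)}$ acts on the whole filtered system). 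Going through the intermediate finite quotients is an unnecessary detour, but the underlying argument is the same and the concern you raise is genuinely resolved as you indicate.
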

\begin{proof}   In the oriented case ($\omega\equiv 1$) the pre-functor $\cM$ is actually a Mackey functor, by Proposition \ref{pretomackey}. More generally, whenever $\cM = F\circ d$ is a Mackey functor the result follows as in Theorem \ref{rgmorita}, since $\cM$ is a Green module over $\cA_{GU}$. By \cite[Theorem 3]{dress2}, and Theorem \ref{main_theorem}, the Burnside quotient Green ring of the Dress ring is hyperelementary computable. 

In the non-oriented case, we define a filtration $F_i = 2^iF$, $i \geq 0$, with $F_0 = F$, and note that the sub-quotients $(F_i/F_{i+1})\circ d$ are Mackey functors. Now we let $(C,\bd)$ denote the filtered Amitsur pseudo-complex for $F_\ast\circ d$ with respect to $2$-hyperelementary induction, and the result follows from  Proposition \ref{pseudo_three}.  Notice that the passage from a pseudo-contractible pseudo-complex to a contractible complex does not change the first boundary map, so $F\circ d$ is 2-adically detected (generated) by the given restriction (induction) maps  to the $2$-hyperelementary subgroups. 
\end{proof}
\begin{example}[Non-oriented $L$-theory]\label{nonorientedL}
 The main example for us is the surgery obstruction group $L_n(\bZ G, \omega)$. It is a foundational result of Wall \cite{wall-V} that the surgery obstruction groups for finite groups are finitely-generated, with $2$-primary torsion exponent. Theorem \ref{thmc} computes $L_n(\bZ G, \omega)\otimes \Zhat_2$ as a limit (and as a colimit) over the $2$-hyperelementary subgroups $H \subset G$, $H \in \cH$. These limits use the standard induction or restriction maps, e.g.~for induction we have the surjective map
 $$\bd_1\colon \bigoplus_{H\in \cH} L_n(\bZ H, \omega)\otimes \Zhat_2 \to L_n(\bZ G, \omega)\otimes \Zhat_2$$
 and our contraction data gives the relation subgroup $\ker\bd_1 = \Image \bd^\prime_2$.
 
  We conclude that $L_n(\bZ G, \omega)$ is also effectively $2$-hyperelementary computable: the torsion subgroup is isomorphic to that of $L_n(\bZ G, \omega)\otimes \Zhat_2$, and the divisibility of the signatures is computable since the kernel and cokernel of the natural transformation
$$L_n(\bZ G, \omega) \to L_n(\bbR G, \omega)$$
of pseudo-Mackey functors are both $2$-primary torsion groups (see \cite[7.3, 7.4]{wall-V}).
The groups $L_n(\bbR G, \omega)$ were computed explicitly by Wall
\cite[2.2.1]{wall-VI} in terms of the irreducible characters of $G$.
The proof of computability given here applies in the oriented case ($\omega\equiv 1$), but in that case the $L$-group is a Mackey functor and the argument is essentially the same as the one given by Dress. Other important examples were listed in \cite[1.B.8]{htw3}.
\end{example}

\begin{example}[$L$-theory with decorations]
Let $R$ be a commutative ring with unit, and consider any $L${-}group
$L^B_n(RG,\omega)$ for $RG$ with anti{-}involution given
 by $\omega\colon G\to \{\pm 1\}$
with decoration in any involution invariant sub{-}bifunctor, $B$, 
of $K_i(\bZ G)$ or $Wh_i(\bZ G)$, $i \leq 1$ 
(see \cite{hrt1}  for a summary of 
the definitions).
It was checked in
\cite[Thm. 5.3, Cor. 5.5 and Ex. 5.14]{hrt1} that the corresponding
round $L${-}theories are functors out of $(\bZ G,\omega)${-}Morita.
Hence these $L${-}theories are pseudo-Mackey functors and are contracted
by the hyperelementary family.
It was also checked in 
\cite[Prop 5.6, Cor. 5.7 and Ex. 5.14]{hrt1} that the
corresponding ordinary $L${-}theories are functors out
of $(\bZ G,\omega)${-}Morita, so the same computation result holds.
\end{example}

\begin{example}[Localization]
Dress \cite{dress2}  shows that the Dress ring $GU$  is contracted by
any family containing the 2{-}hyperelementary and $p${-}elementary families.
More precisely, he showed that the 2{-}localization of the Dress
ring is contracted by the 2{-}hyperelementary family,
and the $p${-}localization, $p$ odd, is contracted by
the $p${-}elementary family.

Proposition \ref{pseudo_two} and a standard ``mixing" argument shows that
this smaller family  suffices
to contract the  $L^B$ functors described above.
For sub{-}bifunctors $B$ closed under the action
of the Dress ring, this was proved by Dress \cite{dress2} and Wall
\cite{wall-VI}.
A similar argument shows that the odd{-}dimensional $L^B${-}groups
are contracted by the 2{-}hyperelementary  family alone.
\end{example}

\begin{example}[Symmetric, hyper-quadratic and lower $L$-theory]
The Ranicki symmetric and hyper{-}quadratic $L${-}theories
\cite{ra19} are
also functors out of $(\bZ G,\omega)${-}Morita  and hence are contracted by
the hyperelementary family.
The hyper{-}quadratic theory is a 2{-}torsion group with an exponent
so it is contracted by the 2{-}hyperelementary family
(as above, we note that the 2{-}localization map induces an
isomorphism on this functor and use the 2{-}local contraction
of functors out of $(\bZ, -)${-}Morita by the 2{-}hyperelementary
family).
The lower $L${-}theories for a ring with anti{-}structure
can be defined in terms of the $L${-}theory of the ring with
some Laurent variables adjoined \cite{ra19} and hence are functors
out of $(\bZ G,\omega)${-}Morita. 
Therefore $L${-}theories with decorations 
in sub{-}Mackey functors of $K_i$ for $i < 0$ 
are contracted by the hyperelementary family.
The higher $L${-}theories of Weiss and Williams \cite{weiss-williams2} should also be
amenable to these techniques.
\end{example}
\begin{example}[Farrell-Hsiang induction]
The technique of \cite[\S\S 1-2]{farrell-hsiang2} was originally introduced to apply induction theory to the $L$-groups of an infinite group $\Gamma$.
Let $\pr \colon \Gamma \to G$ be a homomorphism to a finite group $G$, and define an orientation character for $\Gamma$ by the composition $\omega\circ \pr$, where 
$\omega\colon G \to \{\pm 1\}$ is an orientation character for $G$. 
Then
$L^B_{\ast}(R\Gamma,\omega)$ is an additive functor $\RGomegaWitt \to \Ab$, which defines a pseudo-Mackey functor as above. To check this, note that
 we again have a generating set for the morphisms consisting of the bisets $X=H_2\times_K  H_1$, where $K \subset H_2\times H_1$ is a subgroup. To produce the needed biform on $X$, we adapt the formulas in \cite[1.B]{htw3} with $\theta_X = id$.
 If $\omega\equiv 1$, it follows that these $L$-groups can be computed 
 in terms of the
$L${-}theory of the various subgroups $\Gamma_H = \pr^{-1}(H)$, $H\subset G$. In particular, it is enough to use the  hyperelementary subgroups $H$ of $G$. 
\end{example}

\providecommand{\bysame}{\leavevmode\hbox to3em{\hrulefill}\thinspace}
\providecommand{\MR}{\relax\ifhmode\unskip\space\fi MR }
\providecommand{\MRhref}[2]{%
  \href{http://www.ams.org/mathscinet-getitem?mr=#1}{#2}
}
\providecommand{\href}[2]{#2}

\end{document}